\theoremstyle{plain}
\newtheorem{thm}{Theorem}
\newtheorem{lem}{Lemma}
\newtheorem{prop}{Proposition} 
\newtheorem{cor}{Corollary}
\newcommand{\vecII}[2]{
\ensuremath{
\begin{pmatrix}
#1 \\ #2 \\
\end{pmatrix}}}
\newcommand{\matII}[4]{
\ensuremath{ 
\begin{pmatrix}  
#1 & #2 \\
#3 & #4 \\
\end{pmatrix}}}
\newcommand{\q}[1]{``#1''}
\providecommand{\sm}{\setminus}
\providecommand{\N}{\mathbb{N}}
\providecommand{\R}{\mathbb{R}}
\providecommand{\Z}{\mathbb{Z}}
\providecommand{\C}{\mathbb{C}}
\providecommand{\eps}{\varepsilon}
\providecommand{\ov}{\overline}
\providecommand{\dx}{\,dx}
\providecommand{\dy}{\,dy}
\providecommand{\dH}{\,d\mathcal{H}}
\providecommand{\skp}[2]{\langle#1,#2\rangle}
\providecommand{\Bigskp}[2]{\Big\langle#1,#2\Big\rangle}
\DeclareMathOperator{\diver}{div}
\DeclareMathOperator{\supp}{supp}
\DeclareMathOperator{\sign}{sign}
\DeclareMathOperator{\sgn}{sgn}
\DeclareMathOperator{\diag}{diag}
\DeclareMathOperator{\Real}{Re}
\DeclareMathOperator{\Jac}{Jac}
\DeclareMathOperator{\Imag}{Im}  
\renewcommand{\qed}{\hfill $\Box$}
\def\Xint#1{\mathchoice
   {\XXint\displaystyle\textstyle{#1}}%
   {\XXint\textstyle\scriptstyle{#1}}%
   {\XXint\scriptstyle\scriptscriptstyle{#1}}%
   {\XXint\scriptscriptstyle\scriptscriptstyle{#1}}%
   \!\int} 
\def\XXint#1#2#3{{\setbox0=\hbox{$#1{#2#3}{\int}$} 
     \vcenter{\hbox{$#2#3$}}\kern-.5\wd0}}
\def\avint{\Xint-}
\begin{document}

\allowdisplaybreaks 

\title[The limiting absorption principle for periodic differential operators]{The limiting
absorption principle for periodic differential operators and applications to nonlinear Helmholtz equations}

\author{Rainer Mandel}
\address{~\hfill\break 
R. Mandel \hfill\break
Karlsruhe Institute of Technology \hfill\break
Institute for Analysis \hfill\break
Englerstra{\ss}e 2 \hfill\break
D-76131 Karlsruhe, Germany}
\email{~\hfill\break Rainer.Mandel@kit.edu}
\date{}

\subjclass[2010]{Primary: 35Q60, 35J15}
\keywords{Limiting absorption principle, Nonlinear Helmholtz equation, Fermi surface, Periodic Schr\"odinger
operators}

\begin{abstract}
  We prove an $L^p$-version of the limiting absorption principle for a class of periodic
  elliptic differential operators of second order. The result is applied to the construction of nontrivial
  solutions of nonlinear Helmholtz equations with periodic coefficient functions.
\end{abstract}
 
\maketitle
\allowdisplaybreaks


\section{Introduction} 

 
  In this paper we study elliptic partial differential equations of the form 
  \begin{gather} \label{eq:linear_problem}
    Lu -\lambda u =f \quad\text{in }\R^d
  \end{gather}
  where $L\psi:= -\diver(A(\cdot)\nabla \psi)+V(\cdot)\psi$ is a Schr\"odinger-type operator with periodic
  coefficient functions that are sufficiently regular. For $\lambda$ outside the spectrum of the selfadjoint
  operator $L:L^2(\R^d)\supset H^2(\R^d)\to L^2(\R^d)$ this equation is invertible, i.e., a unique solution
  $u\in H^2(\R^d)$ of \eqref{eq:linear_problem} exists. What about $\lambda$ inside the spectrum of $L$? This
  issue is much more delicate and a general answer for large classes of operators is missing. There is, however, a
  general strategy called \q{limiting absorption principle} how to find nontrivial solutions
  of~\eqref{eq:linear_problem} for such~$\lambda$.
  On an abstract level, any such limiting absorption principle is characterized by a class of
  coefficient functions $A,V$ and real function spaces $X,Y$ such that for all $f\in Y$ and
  $\eps\in\R\sm\{0\}$ there is a unique solution $u^\eps\in X+iX$ of the perturbed equation 
  \begin{equation}\label{eq:linear_problem_eps}
    Lu - (\lambda + i\eps) u = f \quad\text{in }\R^d
  \end{equation}
  such that $u^\eps$ converges as $\eps\to 0^\pm$ to a solution $u^\pm \in X+iX$ of \eqref{eq:linear_problem}
  in a suitable topology.
  Let us give some examples for Schr\"odinger operators of the form $L=-\Delta+V(x)$ in $\R^3$.
  
  \medskip
  
  One of the first results on limiting absorption principles for such operators is due to Odeh \cite{Odeh_LAP}
  who proved uniform convergence of the $u^\eps$ for square integrable\footnote{Odeh requires the right hand
  side to be \q{integrable}, but probably \q{square integrable} is meant in view of the fact that he speaks
  of a unique $L^2$-solution of \eqref{eq:linear_problem_eps}.} right hand sides $f$ with compact support
  provided the potential $V$ decays sufficiently fast at infinity in an averaged sense. Another famous result is due to
  Agmon (Theorem 4.1 in \cite{Agmon_spectral}) who used differently weighted $L^2-$spaces $X$ and $Y$ and so-called short range
  potentials satisfying $V(x)=O(|x|^{-1-\delta})$ as $|x|\to\infty$ for some $\delta>0$. A
  generalization to Helmholtz equations in unbounded and asymptotically conic manifolds was recently proved 
  by Rodnianski and Tao \cite{RodTao_LAP}.
  Further versions of the limiting absorption principle in Morrey-Campanato spaces, again for evanescent
  potentials, can be found in~\cite{CacDanLuc_helmholtz} or~\cite{PerVega_Morrey}. Goldberg and Schlag \cite{GolSch_LAP}
  proved an $L^p$-version of the limiting absorption principle ($X=L^4(\R^3),Y=L^{4/3}(\R^3)$) for potentials
  $V\in L^r(\R^3)\cap L^{3/2}(\R^3)$ with $r>\frac{3}{2}$. Each of these results relies on
  the decay of the potential $V$, which ensures that the resolvent of $-\Delta+V(x)-\lambda-i\eps$ resembles
  the one of $-\Delta-\lambda-i\eps$ as far as the asymptotic properties at infinity are concerned. We stress
  that a control of the global regularity and integrability of the functions $u^\eps$ represents the main
  difficulty since convergence on compact sets can be proved under very mild assumptions on $V$. For
  instance, in~1962 Eidus~\cite{Eidus_LAP} proved a convergence result in $H^2_{loc}(\R^3)$ whenever $V$ is
  bounded from below and locally bounded from above.
  Being interested in global regularity for solutions of periodic problems, we need to take a different
  approach. The main tool of our analysis is Floquet-Bloch theory, which provides a qualitative
  description of the spectrum of elliptic periodic differential operators. As we will see, combining this
  approach with suitable assumptions on the so-called band structure of $L$ leads to a new limiting absorption
  principle. In our analysis we mainly take advantage of the papers by Guti\'{e}rrez~\cite{Gut_nontrivial} and
  Radosz~\cite{Rad_LAP}. The first-mentioned paper provides an $L^p$-version of the limiting absorption
  principle for the Helmholtz operator $-\Delta-\lambda$, while the second paper contains the main ideas 
  how Floquet-Bloch analysis may be used in order to establish a limiting absorption principle for periodic
  problems. Our contribution is to combine the methods from both papers in order to prove an
  $L^p$-version for the limiting absorption principle in the periodic setting. Accordingly,
  both papers are of fundamental importance for this paper, so we provide some details.
   
  \medskip
  
  In \cite{Gut_nontrivial} Theorem~6 Guti\'{e}rrez shows that for all $\lambda>0$ the family of resolvent
  operators $(-\Delta-\lambda-i\eps)^{-1}:L^p(\R^d)\to L^q(\R^d;\C)$ is equibounded with respect to
  $\eps\in\R\sm\{0\}$ provided $d\geq 3$ and $p,q$ are chosen suitably, see~\eqref{eq:pq_Gutierrez}. Here the
  task is to analyze the functions
  $$
    u^\eps 
    := (-\Delta-\lambda-i\eps)^{-1}f 
    = \mathcal{F}^{-1} \left(\frac{\hat f(\cdot)}{|\cdot|^2-\lambda -  i\eps}\right).
  $$
  Guti\'{e}rrez' a priori estimates allow to pass to a weak limit of the $u^\eps$ in $L^q(\R^d;\C)$ as
  $\eps\to 0^\pm$ and the limit functions $u^+,u^-\in L^q(\R^d;\C)$ are given by
  \begin{align} \label{eq:upm_formula}
    \begin{aligned}
    u^\pm(x)
     &= \int_{\R^d}  \frac{i}{4}(2\pi|x-y|)^{\frac{2-d}{2}}H^{(1)}_{\frac{d-2}{2}}(x-y) f(y)\,dy   \\
     &= (2\pi)^{-\frac{d}{2}} \left( p.v. \int_{\R^d} \frac{\hat f(\xi)}{|\xi|^2-\lambda} 
     e^{i\skp{x}{\xi}}\,d\xi + i \pi \int_{\{|\xi|^2=\lambda\}}  \frac{\hat
     f(\xi)}{2\sqrt\lambda} e^{i\skp{x}{\xi}}  \dH^{d-1}(\xi)\right),
     \end{aligned}
  \end{align}
  where $H^{(1)}_{(d-2)/2}:\R\to\C$ denotes the Hankel function of the first kind, see~(11) in
  \cite{EvWe_dual}.  The formula from the second line  follows from Lemma~5.1 in~\cite{Ruiz_LN}. 
  It shows some similarities with the formula 
  obtained by Radosz in the case of a periodic Schr\"odinger operator $L=-\Delta+V(x)$, see Theorem~2.13
  in~\cite{Rad_LAP}. Using Floquet-Bloch theory~\cite{Bloch_Quantenmechanik,Floq_Sur_les_eq} Radosz analyzed
  the convergence of the functions $u^\eps(\lambda,\cdot):=(L-\lambda+i\eps)^{-1}f$ as $\eps\to 0^\pm$ and
  determined complex-valued functions $u^+,u^-$ satisfying
  $$
    \int_{\R^d\times I} u^\pm(\lambda,x)\big((L-\lambda)\phi(\lambda,\cdot)\big)(x) \,d(x,\lambda)
    = \int_{\R^d\times I} f(x)\phi(\lambda,x) \,d(x,\lambda) 
    \quad\text{for all }\phi\in C_0^\infty(I\times \R^d), 
  $$
  where $I\subset\R$ is a sufficiently small interval containing a \q{regular frequency}
  $\lambda\in\sigma(L)$, cf. Definition~1.1 in \cite{Rad_LAP}. More precisely, she shows in Theorem~1.2 that
  the functions $u^\eps$ converge to some $u^\pm$ as $\eps\to 0^\pm$ in the space $L^2(I,Z)$ where $Z$ is a
  suitably weighted $L^2-$space. Nonetheless, Radosz' results are weaker than one may hope for in view of
  Guti\'{e}rrez' results for constant potentials.  First of all, it is expected that a convergence result holds
  true for every fixed regular frequency $\lambda$ in the spectrum of $L$, which cannot be deduced from
  convergence in $L^2(I,Z)$. Furthermore, the topology of the weighted $L^2-$space $Z$ is rather coarse given
  that the weight function is assumed to have some decay at infinity, see p.255-256 and Definition~2.7
  in~\cite{Rad_LAP}. As a consequence, Radosz' techniques do not allow to control the decay of the functions
  $u^\eps(\lambda,\cdot)$ and $u^\pm(\lambda,\cdot)$ at infinity. These shortcomings were our motivation to
  look for a limiting absorption principle that may substitute Guti\'{e}rrez' results \cite{Gut_nontrivial} when
  the differential operator $L$ has periodic instead of constant coefficient functions. Our
  Theorem~\ref{thm_LAP} provides such a new result for a class of differential operators $L$ and
  regular frequencies $\lambda$ satisfying the assumptions (A1),(A2),(A3) that we are going to introduce and
  motivate next.
  
  \medskip 
  
  Our first assumption says that we deal with uniformly elliptic partial differential equations of second
  order in divergence form with $\Z^d$-periodic coefficient functions so that Floquet-Bloch theory is
  applicable. Clearly, by a change of coordinates, other periodicities can be dealt with, too. So we require
  the following:
  \begin{itemize}
    \item[(A1)] $L\psi =-\diver(A(\cdot)\nabla\psi)+V(\cdot)\psi$ for $\Z^d$-periodic coefficient functions
    $A\in C^1_b(\R^d,\R^{d\times d})$ and $V\in L^\infty(\R^d)$ such that $A(x)$ is symmetric
    and $\skp{\xi}{A(x)\xi}\geq c|\xi|^2$ holds for some $c>0$ and all $x,\xi\in\R^d$.
  \end{itemize}
  Under this assumption the operator $L$ is selfadjoint on $\R^d$ with domain $H^2(\R^d)$ and its
  spectrum has a so-called band structure. This means that the
  spectrum of $L$ is the union of infinitely many bands $\lambda_s(B)$ where the band functions $\lambda_s$ are continuous and
  $B=[-\pi,\pi]^d$ is the so-called Brillouin zone, named after L\'{e}on Brillouin in honor of his
  contributions to the study of wave propagation in periodic media~\cite{Bri_wave}. 
  The relation between the band functions $\lambda_s$ and the operator $L$ is given by the following
  $k$-dependent selfadjoint quasiperiodic eigenvalue problems on the periodicity cell $\Omega:=(0,1)^d$:
  \begin{align} \label{eq:FB_ev_problem}
   \begin{aligned}
   L\psi &= \lambda \psi \qquad\qquad\quad\,\text{in }\Omega, \\
   \psi(x+n) &= e^{i\skp{k}{n}}\psi(x) \qquad\text{for }x\in\R^d\text{ and all }n\in\Z^d. 
   \end{aligned}
 \end{align}
 For every $k\in B$ there is an orthonormal basis $\{\psi_s(\cdot,k) : s\in\Z^d\}$ in $L^2(\Omega;\C)$ consisting
 of eigenfunctions of \eqref{eq:FB_ev_problem} with associated eigenvalues $\{\lambda_s(k):s\in\Z^d\}$ so that
 the band structure takes the form
 \begin{equation} \label{eq:bandstructure}
   \sigma(L) = \bigcup_{s\in\Z^d} \lambda_s(B) = \bigcup_{s\in\Z^d} \{\lambda_s(k):k\in B\}.
 \end{equation}
 A proof of \eqref{eq:bandstructure} may be found in Lemma~4 and Lemma~5 in the paper bei
 Odeh and Keller~\cite{OdKe_Bloch}. Notice that their result is formulated for continuous and
 $\Z^d-$periodic potentials $V$, but extends to bounded ones as in (A1). We will use that the functions
 $k\mapsto \psi_s(\cdot,k)\in L^2(\Omega;\C)$ can be chosen to be measurable, see Lemma 5.3~b) 
 in~\cite{BerShu_Schroedinger}. 
 A very subtle point concerns the labeling of the eigenpairs $(\psi_s(\cdot,k),\lambda_s(k))$. A common way to
 do this is to use $\N_0$ instead of $\Z^d$ as an index set and to order the eigenvalues by requiring $\lambda_j(k)\leq
 \lambda_{j+1}(k)$ for all ${j\in\N_0}$. This approach is  used for instance in \cite{OdKe_Bloch} or
 in Eastham's book, see Chapter~6 in~\cite{Eas_TheSpectral}. The advantage of this numbering is two-fold:
 Firstly, it is intuitive and secondly, the $\Z^d-$periodicity and Lipschitz continuity of the band functions
 immediately follow from the min-max-characterization of eigenvalues.
 In this paper, however, we do not use this labeling. The reason 
 is that for this labeling Lipschitz continuity is the best regularity one may in general hope for. Indeed, 
 it is possible that the bands $\lambda_s(B)$ intersect each other transversally so that the
 crossings destroy every kind of differentiability property (but not the Lipschitz continuity) of the
 ordered band functions $\lambda_1\leq \lambda_2,\ldots$, see~p.143~\cite{Kato_short_intro}. 
 This phenomenon is illustrated schematically in Figure XIII.15 in~\cite{RS_analysis_of_operators} in the
 one-dimensional setting. A numerical example for $d=2$ and $L=-\Delta+V(x)$ with a concrete potential~$V$ may be found on p.863 in~\cite{DoUe_CME}.
 We choose the index set~$\Z^d$ for the numbering of the orthonormal basis, which is motivated by the explicit
 example of a constant potential where the Floquet-Bloch eigenpairs $(\psi_s(\cdot,k),\lambda_s(k))$ 
 are given by   
 \begin{equation} \label{eq:eigenpairs_constantpotential}
    \psi_s(x,k) = e^{i\skp{k+2\pi s}{x}},\quad \lambda_s(k) =|k+2\pi s|^2 
   \qquad\text{for }k\in B,s\in\Z^d,x\in \Omega,
 \end{equation}
 see (6.8.1),(6.8.2) in~\cite{Eas_TheSpectral}. So one finds that $\psi_s,\lambda_s$ are smooth with
 $\psi_s(x,k+2\pi n)=\psi_{s+n}(x,k)$, $\lambda_s(k+2\pi n)=\lambda_{s+n}(k)$ for all $n\in\Z^d$. 
 We conclude that with our choice of the index set smoothness may be gained at the expense of
 $\Z^d$-periodicity with respect to the quasimomenta~$k$. We will say more on regularity issues below.  
 
%
 \medskip 
 
 The band functions $\lambda_s$ satisfy the estimates
 \begin{equation} \label{eq:lowerbounds_lambdas}
   c|s|^2-C \leq \lambda_s(k) \leq C|s|^2+C \qquad (s\in\Z^d,\, k\in B)
 \end{equation}
 for some $c,C>0$ independent of $k$. 
 Notice that $|s|^2$ has to be replaced by $|s|^{2/d}$ when $\N_0$ or $\Z$ is used as an index set. We
 quickly recall why this is true. In the case $L=-\Delta$ Theorem~6.3.1
 in~\cite{Eas_TheSpectral} shows that the $j$th largest eigenvalue among the $\lambda_s(k)$ can be enclosed
 between the $j$-th Neumann and the $j$-th Dirichlet eigenvalue. Since the 
 asymptotics for both eigenvalue sequences are given by Weyl's law, \eqref{eq:lowerbounds_lambdas} follows
 for this special case.
 For differential operators $L$ as in $(A1)$ one has $c\cdot(-\Delta)-C\leq L\leq C\cdot(-\Delta+1)$ for some
 $c,C>0$ in the sense of symmetric operators so that \eqref{eq:lowerbounds_lambdas} results from Courant's min-max
 characterization for the eigenvalues of selfadjoint compact operators and the corresponding result for $-\Delta$ mentioned
 above. For more information about the qualitative properties of the eigenpairs
 $(\psi_s(\cdot,k),\lambda_s(k))$ in a one-dimensional setting we refer to Theorem XIII.89 and Theorem
 XIII.90 in~\cite{RS_analysis_of_operators} or Chapter~2.8 in~\cite{BerShu_Schroedinger}. 
 Important tools from Floquet-Bloch analysis are the Floquet-Bloch transform  $U$ and its inverse
 $U^{-1}$ that allow to transfer problems from $\R^d$ to $k$-dependent problems on the periodicity cell
 $\Omega=(0,1)^d,k\in B=[-\pi,\pi]^d$ and vice versa. It is given by
  \begin{align} \label{eq:Floqet_Bloch_transform}
    \begin{aligned}
    U&:L^2(\R^d;\C) \to L^2(\Omega\times B;\C), \quad f\mapsto \Big[ (x,k)\mapsto \sum_{n\in\Z^d}
    f(x-n)e^{ink} \Big], \\
    U^{-1}&: L^2(\Omega\times B;\C)\to L^2(\R^d;\C), \quad g\mapsto \Big[ x\mapsto \frac{1}{\sqrt{|B|}} \int_B
    g(x,k)\,dk   \Big]
    \end{aligned}  
  \end{align}
  where $g$ is to be understood quasiperiodically extended via
  the formula $g(x+n,k)=e^{i\skp{k}{n}}g(x,k)$ \,$(n\in\Z^d)$ from $\Omega\times B$ to $\R^d\times B$, see
  Lemma~2 and Lemma~3  in \cite{OdKe_Bloch}. The Floquet-Bloch transform is an isometry, see Theorem~2.2.5
  in~\cite{Kuc_Mathematics} or Corollary~2 in~\cite{OdKe_Bloch}.

\medskip 

 With the above preparations we may now introduce and discuss the precise regularity assumptions that we 
 to impose on the Floquet-Bloch eigenpairs $(\psi_s(\cdot,k),\lambda_s(k))$ from above. In the case of the
 trivial potential, see~\eqref{eq:eigenpairs_constantpotential}, the functions
 $\Lambda:\R^d\to\R,\Psi:\Omega\times\R^d\to\C$ defined by 
 \begin{equation}\label{eq:defn_LambdaPsi}
   \Lambda(k+2\pi s):=\lambda_s(k),\qquad \Psi(x,k+2\pi s):=\psi_s(x,k) \qquad \text{for }x\in \Omega,k\in
   B,s\in\Z^d
 \end{equation}
 are real analytic  
 and the so-called Fermi surfaces (or isoenergetic surfaces)
 \begin{equation} \label{eq:defn_Ftau}
   F_\tau:=\{k\in\R^d:\Lambda(k)=\tau\} 
 \end{equation} 
 are spheres of radius $\sqrt{\tau}$ for all positive $\tau$, i.e., for all $\tau$ in the interior the
 spectrum $[0,\infty)$. In the general case, our assumption (A2) on the Fermi surfaces of the operator $L$
 will ensure that for $\tau$ close to a given frequency $\lambda\in\sigma(L)$ the associated Fermi surfaces $F_\tau$ show a
 somewhat similar behaviour. More precisely, we will require them to be compact, sufficiently smooth and to
 have positive Gaussian curvature in each point of the surface. From the physical point of view it is
 reasonable to assume that at least small periodic perturbations of constant potentials have this property,
 which we will actually prove in the two-dimensional case, see Lemma~\ref{lem:example}.
 In some textbooks and papers the term \q{Fermi surface} is used
 differently. There it is the uniquely defined subset of the Brillouin zone $B=[-\pi,\pi]^d$ that contains a
 $2\pi\Z^d$-translate of a point from $F_\tau$. In other words, it is given as follows: 
 \begin{equation} \label{eq:defn_calFtau}
   \mathcal{F}_\tau = \{ k\in B: \lambda_s(k)= \tau \text{ for some }s\in\Z^d\}.
 \end{equation}
 In the physical literature definition~\eqref{eq:defn_Ftau} is called the extended-zone scheme,
 while~\eqref{eq:defn_calFtau} corresponds to the reduced-zone scheme.  
 The following statement about the $\mathcal{F}_\tau$ is taken literally from S\'{o}lyom's book
 \cite{Solyom:Fundamentals}, page 89:  \vspace{0.2cm}
 
 \begingroup
\leftskip1em
\rightskip\leftskip
  \noindent
   \q{\ldots However, the presence of a periodic potential can drastically
 distort the spherical shape of the Fermi surface -- and, as we shall see, it can even disappear. For a
 relatively small number of electrons only the states at the bottom of the lowest-lying band are occupied.
 The Fermi surface is then a simply connected continuous surface that deviates little from the spherical
 shape. When the number of electrons is increased, the surface may cease to be simply connected \ldots In
 such cases more than one band can be partially filled. The Fermi surface separating occupied and unoccupied
 states must then be given for each of these -- hence the Fermi surface is made up of several pieces.}
\par
\endgroup \vspace{0.2cm}
 Transferred to our situation this means that for small $\tau$ one typically observes that
 $\mathcal{F}_\tau=F_\tau$ has a spherical shape. For larger $\tau$, however, $\mathcal{F}_\tau\neq F_\tau$ is
 possible and $\mathcal{F}_\tau$ may be disconnected. Indeed, this phenomenon can be easily
 verified for the constant potential $V\equiv 0$, which is again based 
 on~\eqref{eq:eigenpairs_constantpotential}.
 For $\tau>\sqrt\pi$ the sphere $F_\tau=\{k\in\R^d:\Lambda(k)=|k|^2=\tau\}$ does not fit into the Brillouin
 zone $B=[-\pi,\pi]^d$ and thus $\mathcal{F}_\tau$ becomes disconnected. It is however remarkable that the Fermi
 surfaces $F_\tau$ according to our definition from \eqref{eq:defn_Ftau} keep their shape regardless of the
 precise value of $\tau>0$. This makes us believe that, firstly, the sets $F_\tau$ are actually more
 meaningful and physical than the $\mathcal{F}_\tau$.
 Notice that the fact of the $\mathcal{F}_\tau$ becoming disconnected for $\tau>\sqrt\pi$ does not produce any
 physical effects; the Helmholtz equation $-\Delta u -\tau u = f$ for $\tau$ bigger or smaller than $\sqrt\pi$
 may be transformed into each other by a simple rescaling so that the qualitative description of the solutions
 does not change. Secondly, assuming a spherical shape in terms of positive Gaussian curvature also makes
 sense from a physical point of view. Our assumptions for the Fermi surfaces concern their shape as well as
 their regularity. 

 \begin{itemize}
   \item[(A2)] For $\Lambda,\Psi,F_\lambda$ defined in~\eqref{eq:defn_LambdaPsi},\eqref{eq:defn_Ftau} and
   an open set $U\subset\R^d$ the following holds:
   \begin{itemize}
     \item[(a)] $F_\lambda\subset U$ with 
      $\Psi\in L^\infty(\Omega,C^{N-1,\beta}(\ov U))<\infty$, $\Lambda\in C^{N,\beta}(\ov U)$ for $N\geq
      3,N>\frac{d+1}{2},\beta>0$.
     \item[(b)] $F_\lambda$ is a closed, compact, regular hypersurface
     with positive Gaussian curvature.
  \end{itemize}
 \end{itemize}
 We stress that the regularity assumptions on the functions $\Lambda,\Psi$ are only imposed on a
 neighbourhood $U$ of the Fermi surface $F_\lambda$.
 At first sight this seems to be a technical
 point, but in fact it is known for $d=2$ that $\Lambda$ can only be an entire function if $V$ is constant,
 see Theorem~4.4.6 in~\cite{Kuch_Floq}.
 In Lemma~\ref{lem:example} we will present a comparatively simple situation where (A2) holds. In this case
 the surface will even be analytic due to the analyticity of $\Lambda$ on $F_\lambda$. 
 Notice that the band functions $\lambda_s$ can be shown to be real analytic as long as they do not intersect, 
 see for instance Theorem~2 and Remark~(iii) in~\cite{OdKe_Bloch}.
 In~\cite{Wil_BlochWaves} Wilcox proves that for all $s\in\Z^d$ the mappings $k\mapsto \psi_s(\cdot,k)\in
 C(\ov\Omega)$ are holomorphic on $B\sm Z_s$ where $Z_s$ is a closed null set, but this regularity result
 is not sufficient for the verification of (A2). 
 
 \medskip
  
 In Figure~\ref{Fig:Fermi_surfaces} a few Fermi surfaces (and, for computational reasons, translates of
 it) are plotted numerically for an almost constant potential (left) and for a strongly oscillating one
 (right). The figure on the left suggest that assumption (A2)(b) is satisfied for all depicted
 frequencies~$\tau$. The Fermi surfaces on the right hand side are more complicated and for some $\tau$ more
 than one connected components of the Fermi surface can be found as well as parts with negative Gaussian
 curvature. So in this case the geometry of the Fermi surfaces does not seem to be covered by (A2)(b). The
 author thanks T.Dohnal (University of Dortmund) for providing these pictures. The regularity assumption for
 $F_\lambda$ requires $\nabla\Lambda\neq 0$ on $F_\lambda$ and frequencies $\lambda\in\sigma(L)$ with this
 property are called regular. As mentioned in Remark~2.2 of~\cite{Rad_LAP} almost all frequencies in
 $\Lambda(U)\subset \sigma(L)$ are regular. Indeed, Sard's Lemma and $\Lambda\in C^1(U)$ imply that the set of
 irregular frequencies in $\Lambda(U)$, which is $\Lambda(\{k\in U: \nabla\Lambda(k)=0\})$, is a null set.
 For the constant potential all frequencies $\tau>0$ are regular. We mention that (A2) implies that
 $F_\lambda$ is embedded (see Corollary 5.14 in \cite{Lee_Introduction}) and that all Fermi surfaces $F_\tau$
 for $\tau\approx\lambda$ also satisfy (A2)(b). Assumption (A2) will allow us to analyze the properties of
 certain integrals over the Fermi surfaces that may be interpreted as a generalized version of Herglotz
 waves, which are known to play a fundamental role in the study of Helmholtz equations, see~\cite{Ruiz_LN}
 for more details in this direction. Let us mention that Herglotz waves also appear in Guti\'{e}rrez' proof
 of the limiting absorption principle for the Helmholtz operator \cite{Gut_nontrivial} so that it may not
 surprise that such integrals are involved in our analysis. We refer to the end of
 Section~\ref{sec:proof_prop_I} for more details.
 
 \begin{figure}[!h] 
    \centering  
    \subfigure[\small{Fermi surfaces $F_\tau$ for the potential ${V(x,y)=0.2\sin(2\pi x)^2\cos(2\pi y)}$ and
    ${\tau=5,15,30,40}$ (red/black/green/blue)}]{
      \includegraphics[scale=.44]{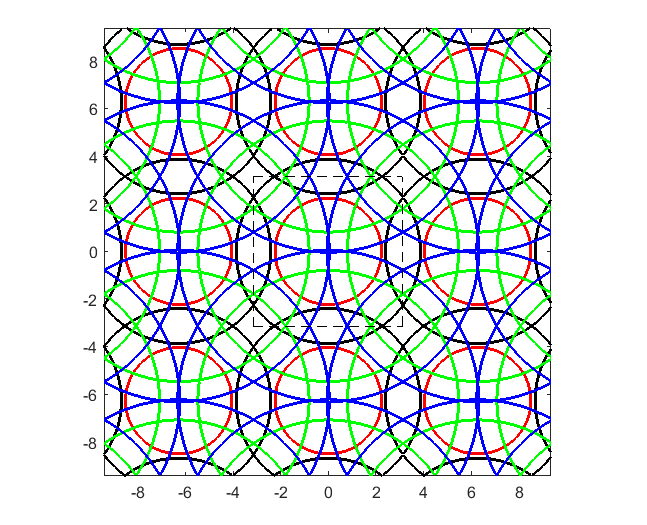}
    }   \qquad 
    \subfigure[\small{Fermi surfaces $F_\tau$ for the potential ${V(x,y)=10\sin(2\pi x)^2\cos(2\pi y)}$ and
    ${\tau=5,15,30,40}$ (red/black/green/blue)}]{ 
      \includegraphics[scale=.44]{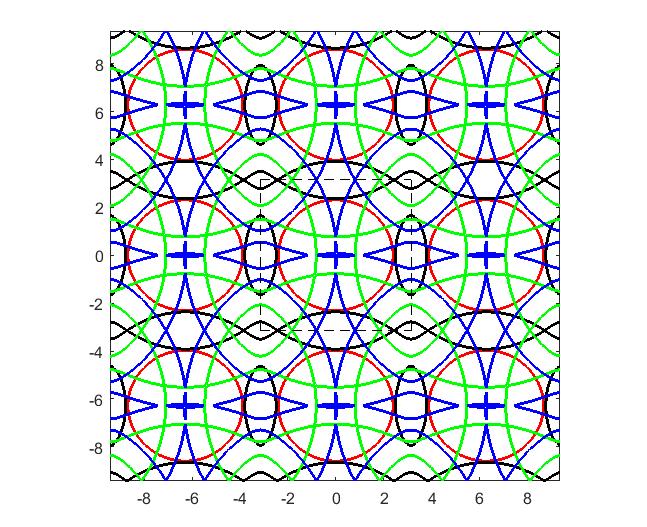} 
    } 
    \caption{Fermi surfaces}
     \label{Fig:Fermi_surfaces} 
  \end{figure} 
 
 \medskip
 
 Our last assumption concerns the eigenfunctions $\psi_s(\cdot,k)$ introduced above. 
 \begin{itemize}
   \item[(A3)] There is a $C>0$ such that $\|\psi_s(\cdot,k)\|_{L^\infty(\Omega;\C)}\leq C$ for all
   $s\in\Z^d,k\in B$.
 \end{itemize}
 Again we deduce from~\eqref{eq:eigenpairs_constantpotential} that this assumption holds for the
 constant potential, but further sufficient conditions are provided in Lemma~\ref{lem:example}.  
 
 \medskip

 We finally come to our main result, which is the limiting absorption principle for periodic differential
 operators satisfying the assumptions (A1),(A2),(A3). It is formulated in terms of the resolvent operators
 \begin{equation} \label{eq:Reps_defn}
   \mathcal{R}^\eps(\lambda):=\mathcal{R}(\lambda+i\eps) := (L-\lambda-i\eps)^{-1} \qquad
   \text{for }\eps\in\R\sm\{0\} 
 \end{equation}
 that we will consider as bounded linear operators from $L^p(\R^d)$ to $L^q(\R^d;\C)$ for $p,q$ according to
 the following inequalities:
  \begin{align}\label{eq:admissible_pq}
      \begin{aligned}
     & 1\leq p < \frac{2(d+1)}{d+3}, &&\,\,\frac{2dp}{2+p(d-3)}<q\begin{cases}
        < \frac{pd}{d-2p} &, p\leq \frac{d}{2} \\
        \leq \infty &, p>\frac{d}{2}
      \end{cases} \qquad\text{or}\\
     &\frac{2(d+1)}{d+3}\leq p<\frac{2d}{d+1}, &&\frac{2p}{2d-p(d+1)}<q\begin{cases}
        < \frac{pd}{d-2p} &, p\leq \frac{d}{2} \\
        \leq \infty &, p>\frac{d}{2}
      \end{cases}.
    \end{aligned}
  \end{align}
  An equivalent set of conditions, more in the spirit of a Riesz diagram, are given
  by~\eqref{eq:pq_nonresonant},\eqref{eq:resonant_Rieszdiagrm_conditions}. Our limiting absorption principle
  for periodic differential operators reads as follows.

  \begin{thm}\label{thm_LAP}
    Let $d\in\N,d\geq 2, p,q$ satisfy \eqref{eq:admissible_pq}  and let the assumptions (A1),(A2),(A3)
    hold for some $\lambda\in\sigma(L)$. Then the family of resolvent operators
    $\mathcal{R}^\eps(\lambda):L^p(\R^d)\to L^q(\R^d;\C)$ from~\eqref{eq:Reps_defn} is equibounded and there
    exist bounded linear operators $\mathcal{R}^\pm(\lambda):L^p(\R^d)\to L^q(\R^d;\C)$ such that 
    $$ 
      \mathcal{R}^\eps(\lambda)\to \mathcal{R}^\pm(\lambda) \quad\text{as }\eps\to 0^\pm 
    $$
    in the operator norm. For all $f\in L^p(\R^d)$ the functions
    $\mathcal{R}^\pm(\lambda)f \in W^{2,p}(\R^d;\C)+ W^{2,q}(\R^d;\C)$ 
    are strong solutions of $Lu-\lambda u = f$ in $\R^d$.
  \end{thm}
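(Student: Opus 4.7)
The plan is to transport the problem to the periodicity cell via the Floquet--Bloch transform $U$ of~\eqref{eq:Floqet_Bloch_transform}, where $L$ is diagonalized by the bases $\{\psi_s(\cdot,k)\}_{s\in\Z^d}$. Writing $\hat f_s(k):=\skp{(Uf)(\cdot,k)}{\psi_s(\cdot,k)}_{L^2(\Omega)}$, one obtains formally
$$
\mathcal{R}^\eps(\lambda)f(x) = \frac{1}{\sqrt{|B|}}\int_B\sum_{s\in\Z^d}\frac{\hat f_s(k)\,\psi_s(x,k)}{\lambda_s(k)-\lambda-i\eps}\,dk.
$$
Compactness of $F_\lambda$ together with (A2) means that only finitely many bands $s\in S\subset\Z^d$ produce terms in which $\lambda_s(k)-\lambda$ may vanish; passing to the extended-zone parametrization via $\Lambda,\Psi$ of~\eqref{eq:defn_LambdaPsi}, these \emph{resonant} bands glue together into a single integral on a neighbourhood of $F_\lambda$ inside $U$. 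A smooth cutoff $\chi$ supported near $F_\lambda$ in that picture then induces a decomposition $\mathcal{R}^\eps(\lambda)=\mathcal{R}^\eps_{\mathrm{nr}}(\lambda)+\mathcal{R}^\eps_{\mathrm{r}}(\lambda)$.

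For the non-resonant piece the denominator is bounded below in modulus uniformly in $\eps$, so standard $L^p$-theory for periodic elliptic operators yields an equibounded family $L^p(\R^d)\to W^{2,p}(\R^d;\C)$; composition with Sobolev embedding (for $p\leq d/2$) or Morrey embedding (for $p>d/2$) produces an equibounded $L^p\to L^q$ family, and operator-norm convergence as $\eps\to 0^\pm$ is immediate by dominated convergence since the multiplier is continuous up to $\eps=0$.

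The core of the proof is the resonant piece. Since $\nabla\Lambda\neq 0$ on $F_\lambda$ by~(A2), I would introduce coordinates $(r,\sigma)$ with $r=\Lambda(k)-\lambda$ and $\sigma$ running over $F_{\lambda+r}$ in a tubular neighbourhood of $F_\lambda$, so that
$$
\mathcal{R}^\eps_{\mathrm{r}}(\lambda)f(x) = \int_{-\delta}^{\delta}\frac{T_rf(x)}{r-i\eps}\,dr,
$$
where $T_r$ is a layer operator integrating against $F_{\lambda+r}$ with an amplitude built from $\Psi$ and the coordinate Jacobian, of class $C^{N-1,\beta}$ thanks to~(A2)(a). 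Sending $\eps\to 0^\pm$ formally produces the Sokhotski--Plemelj splitting
$$
\mathcal{R}^\pm_{\mathrm{r}}(\lambda)f(x) = \mathrm{p.v.}\!\int_{-\delta}^{\delta}\frac{T_rf(x)}{r}\,dr\pm i\pi\,T_0f(x),
$$
which is the direct analogue of the constant-coefficient formula~\eqref{eq:upm_formula}. The two essential estimates are: (i)~a uniform $L^p\to L^q$ bound on the generalized Herglotz trace operator $T_0$, which by the positive Gaussian curvature hypothesis~(A2)(b) reduces via a $TT^\ast$ argument to the Stein--Tomas restriction theorem and gives the Tomas--Stein line in~\eqref{eq:admissible_pq}; and (ii)~a uniform $L^p\to L^q$ bound on the principal-value integral, which I would obtain by a dyadic decomposition in $|r|$ combined with oscillatory-integral estimates for $T_r$ in the spirit of Gutiérrez~\cite{Gut_nontrivial}. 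The uniform $L^\infty$-bound~(A3) on $\psi_s$ is what makes these amplitude estimates global in $x$.

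Combining the two parts yields equiboundedness of $\mathcal{R}^\eps(\lambda)$ and operator-norm convergence to $\mathcal{R}^\pm(\lambda)$. The strong-solution statement follows by reading off regularity on each piece: the non-resonant contribution lies in $W^{2,p}(\R^d;\C)$ by the above, and the resonant contribution lies in $W^{2,q}(\R^d;\C)$ because its Floquet support is compactly localized in the extended-zone scheme, so $x$-derivatives cost only a bounded factor. The main obstacle I anticipate is the principal-value bound in~(ii): Gutiérrez's constant-coefficient argument hinges on the explicit multiplier $(|\xi|^2-\lambda-i\eps)^{-1}$, while in the periodic setting one has to track the joint $(x,k)$-dependence of the eigenfunctions $\psi_s(x,k)$. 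Assumptions~(A2)(a) and~(A3) are tailored precisely for this, but upgrading the bounds from the Tomas--Stein endpoint to the full Riesz range~\eqref{eq:admissible_pq} is the delicate point.
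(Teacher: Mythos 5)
Your overall architecture (Floquet--Bloch diagonalization, cutoff near $F_\lambda$ splitting the resolvent into nonresonant and resonant parts, Sokhotski--Plemelj for the limit) matches the paper, but two of your key steps do not go through as stated. The decisive one is step~(i) of your resonant analysis: you propose to bound the trace operator $T_0$ by reducing, via $TT^\ast$ and the positive curvature of $F_\lambda$, to the Stein--Tomas restriction theorem. No Stein--Tomas analogue is available in the Floquet--Bloch setting --- the kernel of $T_0T_0^\ast$ is $\int_{F_\lambda}\Psi(x,k)\ov{\Psi(y,k)}|\nabla\Lambda(k)|^{-1}e^{i\skp{x-y}{k}}\,d\cH^{d-1}(k)$, which is not a function of $x-y$, so the classical argument does not apply; the paper explicitly flags the absence of such a theorem as the reason its exponent range \eqref{eq:admissible_pq} is strictly smaller than Gutiérrez's \eqref{eq:pq_Gutierrez}. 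What the paper does instead is: (a) prove the pointwise decay $|K_2^\eps(x,y)|\leq C(1+|x-y|)^{(1-d)/2}$ by stationary phase on the Fermi surfaces (Proposition~\ref{prop:resonant_pointwise_decay}); (b) decompose dyadically \emph{in physical space} ($[x]-[y]\in R_j$, i.e.\ $|x-y|\sim 2^j$), not in the spectral parameter $|r|$ as you suggest; (c) prove an $L^2\to L^2$ bound $\lesssim 2^{j(1+\delta)}$ on each shell via Dirichlet-kernel estimates for the Floquet--Bloch transform of the shell-localized kernel (Propositions~\ref{prop:estimate_FBtransform_kernel}, \ref{prop:L2_l2_estimate}, \ref{prop:gj}); and (d) interpolate to get a geometrically summable bound $2^{\gamma j}$, $\gamma<0$. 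Your principal-value bound (ii) inherits the same difficulty and is resolved the same way, since Proposition~\ref{prop:Plemelj_formula} reduces both the p.v.\ term and the $\eps$-convergence to Hölder-in-$\tau$ control of the surface integrals $a_{x,y}(\tau)$, which again comes from stationary phase.

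The second gap is the nonresonant part. "Standard $L^p$-theory for periodic elliptic operators" does not apply to $\mathcal{R}^\eps_{1}(\lambda)$: this operator is a Floquet multiplier with symbol $(1-\chi(\lambda_s(k)-\lambda))/(\lambda_s(k)-\lambda-i\eps)$, and without $k$-regularity of $\psi_s(\cdot,k)$ away from $F_\lambda$ (which (A2) does not provide) there is no Mikhlin-type theorem to invoke. The paper instead proves a Hausdorff--Young inequality for the coefficients $\skp{Uh}{\psi_s}$ (Proposition~\ref{prop:Hausdorff_Young}, using (A3) and interpolation between $r=2$ and $r=\infty$) and then argues by duality and Hölder in $L^r(B\times\Z^d)$, using the eigenvalue growth \eqref{eq:lowerbounds_lambdas}; this is precisely where the constraints $1\leq p\leq 2\leq q$ and $1/p-1/q<2/d$ in \eqref{eq:pq_nonresonant} originate. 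Finally, for the regularity statement, your claim that compact Floquet support makes $x$-derivatives cost a bounded factor is not justified (the $x$-dependence sits in the eigenfunctions, not in a band-limited Fourier support); the paper obtains $\mathcal{R}^\pm(\lambda)f\in W^{2,p}+W^{2,q}$ by writing $u^\eps=(L_0+1)^{-1}((1-V-\lambda-i\eps)u^\eps)+(L_0+1)^{-1}f$ and applying elliptic $W^{2,r}$-estimates, which you could adopt directly.
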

  
  
  Additionally, the functions $\mathcal{R}^\pm(\lambda)f$ are expected to satisfy a generalized form  
  of Sommerfeld's radiation condition at infinity. Similarly, the farfield expansions of these functions are
  of interest and generalized versions of the corresponding results for constant potentials (see for instance
  Proposition~2.7 and Proposition~2.8 in \cite{EvWe_dual}) are expected to hold. We
  mention that our conditions on~$p,q$ are probably non-optimal given that Guti\'{e}rrez' limiting absorption principle from
  \cite{Gut_nontrivial} holds for the larger range of exponents $p,q\in [1,\infty]$ satisfying the
  inequalities
  \begin{equation} \label{eq:pq_Gutierrez}
    \frac{1}{p}> \frac{d+1}{2d}, \qquad \frac{1}{q}<\frac{d-1}{2d},\qquad \frac{2}{d+1}\leq
    \frac{1}{p}-\frac{1}{q}\leq \frac{2}{d}. 
  \end{equation}
  The reason for this comes from a different interpolation procedure for the resonant part of the
  resolvent operators $\mathcal{R}^\eps(\lambda)$, as we will see later. Notice that the crucial estimates for
  the Helmholtz operator are based on the Stein-Tomas theorem and, as far as we know, no equivalent of this
  result is known in the context of Floquet-Bloch theory. We hope that future research will make it possible
  to extend our results to all exponents satisfying~\eqref{eq:pq_Gutierrez}. 
  
%
  
  \medskip
  
  Finally, we discuss an application of  the limiting absorption principle from Theorem~\ref{thm_LAP}. We
  study real-valued solutions of the nonlinear Helmholtz equation
  \begin{equation} \label{eq:NLH}
      Lu-\lambda u = \pm\Gamma(x)|u|^{q-2}u \quad\text{in }\R^d
  \end{equation} 
  where $L,\lambda$ satisfy the assumptions of the theorem and $\Gamma\in L^\infty(\R^d)$ is a positive
  $\Z^d-$periodic function. In the case $L=-\Delta$ and $\lambda>0$ Evequoz and Weth \cite{EvWe_dual} showed
  that \eqref{eq:NLH} admits a dual variational formulation in $L^{q'}(\R^d)$ for
  $\frac{2(d+1)}{d-1}\leq q\leq \frac{2d}{d-2}$ that relies on the selfdual estimates for the
  associated resolvent operators $\mathcal{R}^\pm(\lambda):L^{q'}(\R^d)\to L^q(\R^d;\C)$.
  For $q$ in the interior of this interval they proved the
  existence of a mountain pass critical point in $L^{q'}(\R^d)$ of the associated dual functional and thus the
  existence of a nontrivial dual ground state solution of \eqref{eq:NLH} belonging to $L^q(\R^d)$. This
  solution even lies in $W^{2,r}(\R^d)$ for all $r\in [q,\infty)$.
  One of the major limitations in their approach is the specific form of the linear operator $L$, which is due to
  the fact that only in this case the mapping properties of the resolvent-type operators
  $\mathcal{R}^\pm(\lambda)$ are known (thanks to Guti\'{e}rrez' results we mentioned above). We refer to the
  beginning of Section~2 in~\cite{EvWe_dual} for the details. Given that the selfdual estimates 
  $\mathcal{R}^\pm(\lambda):L^{q'}(\R^d)\to L^q(\R^d;\C)$ from Theorem~\ref{thm_LAP} hold for
  $\frac{2(d+1)}{d-1}< q<\frac{2d}{d-2}$, we may apply the same variational techniques provided the linear
  operator satisfies (A1),(A2),(A3).
  
  \begin{cor}\label{cor_NLH} 
    Let  $d\in\N,d\geq 2, \frac{2(d+1)}{d-1}<q<\frac{2d}{d-2}$ and let the assumptions (A1),(A2),(A3)
    hold for some $\lambda\in\sigma(L)$, let $\Gamma\in L^\infty(\R^d)$ be positive and $\Z^d-$periodic. Then
    the nonlinear Helmholtz equation~\eqref{eq:NLH} has a nontrivial solution $u\in
    W^{2,r}(\R^d)$ for all $r\in [q,\infty)$.
  \end{cor}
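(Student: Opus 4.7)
The plan is to adapt the dual variational machinery of Evequoz--Weth \cite{EvWe_dual} to the periodic setting, using Theorem~\ref{thm_LAP} as the substitute for Guti\'errez' resolvent estimates. For $q \in (\tfrac{2(d+1)}{d-1}, \tfrac{2d}{d-2})$ the dual exponent $q'$ satisfies $q'<\tfrac{2(d+1)}{d+3}$ and $\tfrac{1}{q'}-\tfrac{1}{q}=1-\tfrac{2}{q}\in (\tfrac{2}{d+1}, \tfrac{2}{d})$, so $(p,q)=(q',q)$ fits into~\eqref{eq:admissible_pq}. Consequently Theorem~\ref{thm_LAP} applies and delivers a bounded selfdual resolvent $\mathcal{R}^+(\lambda):L^{q'}(\R^d)\to L^q(\R^d;\C)$. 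Since $L$ is a real operator, $T:=\Real\mathcal{R}^+(\lambda):L^{q'}(\R^d)\to L^q(\R^d)$ is a bounded real operator and for every real $f$ the function $Tf$ is a real strong solution of $Lu-\lambda u=f$.

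The next step is the dual reformulation. If $u$ is a real solution of \eqref{eq:NLH} with $v:=\pm \Gamma|u|^{q-2}u \in L^{q'}(\R^d)$, then $u=\pm \Gamma^{-1/(q-1)}|v|^{q'-2}v$ and $u=Tv$, so solving \eqref{eq:NLH} is equivalent to solving the dual equation
\begin{equation*}
  \Gamma^{-1/(q-1)}|v|^{q'-2}v = \pm Tv, \qquad v\in L^{q'}(\R^d).
\end{equation*}
Since $\mathcal{R}^+(\lambda)$ is not symmetric in general one has to check that the symmetrization $T=\Real\mathcal{R}^+(\lambda)$ is itself formally selfadjoint as an operator $L^{q'}\to L^q$; this follows from $L$ being symmetric together with the identity $\mathcal{R}^+(\lambda)^*=\mathcal{R}^-(\lambda)$ on $L^2$, extended by density using the $L^p$--$L^q$ estimate. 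This allows one to define the $C^1$ dual functional
\begin{equation*}
  J_{\pm}(v) = \frac{1}{q'}\int_{\R^d}\Gamma^{-1/(q-1)}|v|^{q'}\dx
  \mp \frac{1}{2}\int_{\R^d} v\cdot Tv\dx, \qquad v\in L^{q'}(\R^d),
\end{equation*}
whose critical points are exactly the dual solutions above.

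I would then verify that $J_\pm$ has a mountain pass geometry: the $L^{q'}$-norm term dominates near $0$, giving a strictly positive local minimum on a small sphere, while along any direction $v_0$ with $\int v_0 \cdot T v_0$ having the correct sign one has $J_\pm(tv_0)\to -\infty$; such $v_0$ exists because $\lambda\in\sigma(L)$ forces the quadratic form associated with $T$ to be indefinite (e.g.\ by testing with $v_0$ whose Floquet-Bloch transform concentrates on $F_\lambda$). A Palais--Smale sequence $\{v_n\}$ is automatically bounded in $L^{q'}$ by the superquadratic structure ($q'<2$) of the nonlinear part, and the linear term is $\Z^d$-translation invariant since both $\Gamma$ and $L$ are $\Z^d$-periodic. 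Applying the non-vanishing lemma of P.-L.~Lions in $L^{q'}$ (in the form used in Theorem~1.1 of~\cite{EvWe_dual}), together with translates $v_n(\cdot - y_n)$ along $y_n\in\Z^d$, yields a nontrivial weak limit $v_\infty\neq 0$ which is a critical point of $J_\pm$.

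Finally, regularity: setting $u:=Tv_\infty\in L^q(\R^d)$, one has $Lu-\lambda u = v_\infty = \pm\Gamma|u|^{q-2}u\in L^{q'}(\R^d)$, so by Theorem~\ref{thm_LAP} $u\in W^{2,q'}(\R^d)+W^{2,q}(\R^d;\C)$. Standard Sobolev embedding and a bootstrap using the mapping $\mathcal{R}^+(\lambda):L^r\to L^{r^*}$ (for admissible pairs obtained from~\eqref{eq:admissible_pq}) together with $\Gamma|u|^{q-2}u\in L^{r/(q-1)}$ upgrade the integrability of $u$ step by step, eventually placing $u\in L^r$ for all $r\in[q,\infty)$. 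A final application of Calder\'on--Zygmund theory for the elliptic operator $L-\lambda$ (valid thanks to (A1)) yields $u\in W^{2,r}(\R^d)$ for all $r\in[q,\infty)$. The main obstacle is the Palais--Smale step: the lack of compactness from both the unbounded domain and the indefinite quadratic form has to be defeated by the $\Z^d$-translation invariance and a careful non-vanishing argument, exactly the point where our $L^p$--$L^q$ resolvent bounds (rather than Radosz' weaker weighted $L^2$ statements) are indispensable.
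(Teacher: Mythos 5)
Your overall route is the one the paper takes: check that $(p,q)=(q',q)$ is admissible for \eqref{eq:admissible_pq}, pass to the dual functional, run mountain pass plus a Lions-type nonvanishing/translation argument, then bootstrap regularity. Two steps, however, are precisely where the work lies and are not actually carried out. First, the mountain-pass direction. You assert that $\lambda\in\sigma(L)$ makes the quadratic form indefinite, ``e.g.\ by testing with $v_0$ whose Floquet--Bloch transform concentrates on $F_\lambda$.'' Concentrating on $F_\lambda$ is the wrong prescription: in the Floquet--Bloch representation the quadratic form is a principal value of $\int\!\!\int |\Psi(x,k)|^2\,|\cdot|^2/(\Lambda(k)-\lambda)$-type expressions, and the distributional contribution $\pm i\pi$ supported on $F_\lambda$ is purely imaginary, hence drops out of the real part; support on or symmetrically around $F_\lambda$ gives no sign. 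The paper instead concentrates $Uy_\pm$ on a shell $\{\delta\le\pm(\Lambda(k)-\lambda)\le2\delta\}$ strictly to one side of the Fermi surface (possible since $\nabla\Lambda\neq0$ there), so the integrand has a fixed sign, and then truncates, $z_\pm=1_{B_R(0)}y_\pm$, to land in $L^{q'}(\R^d)$ while preserving the strict inequality. This explicit construction is the only genuinely non-obvious point of the variational part, and your sketch as written would not produce it. (Your claim that $T=\Real\mathcal{R}^+(\lambda)$ is formally selfadjoint is correct, but not for the reason you give: $\mathcal{R}^{+*}=\mathcal{R}^-$ alone only shows that the transpose of $\Real\mathcal{R}^+$ is $\Real\mathcal{R}^-$; one also needs that $L$ is a real operator, whence $\mathcal{R}^-f=\ov{\mathcal{R}^+\ov f}$ and $\Real\mathcal{R}^+=\Real\mathcal{R}^-=\mathbf{R}$ on real functions. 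The paper sidesteps this by using the manifestly symmetric kernel $K^*(x,y)=\tfrac12\Real(K^+(x,y)+K^+(y,x))$.)

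Second, the regularity bootstrap. Iterating the resolvent bounds $\mathcal{R}^+(\lambda):L^r\to L^{r^*}$ cannot reach all $r\in[q,\infty)$: the admissible range \eqref{eq:admissible_pq} requires $p<\tfrac{2d}{d+1}<2$, so these estimates become unavailable as soon as the nonlinearity $\Gamma|u|^{q-2}u$ only lies in $L^p$ with $p\ge\tfrac{2d}{d+1}$, and in any case the identity $u=\mathbf{R}(\Gamma|u|^{q-2}u)$ is only known in the original $L^{q'}$--$L^q$ duality. The paper's route is to first prove $u\in L^q(\R^d)\cap L^\infty(\R^d)$ by a Moser-type iteration on the integral equation using the pointwise kernel bounds (the first part of Lemma~4.3 in \cite{EvWe_dual}, which transfers to $L$), and only then to apply the global $W^{2,r}$ estimates for $L_0+1$ to $(L_0+1)u=f+(1-V-\lambda)u\in L^q\cap L^\infty$. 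Without an $L^\infty$ bound your final Calder\'on--Zygmund step does not close.
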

  
  As in \cite{Eveq_plane,EvWe_dual} the existence of infinitely many nontrivial solutions may be shown by
  invoking the Symmetric Mountain Pass Theorem under the assumption that $\Gamma$ is evanescent at infinity so that the
  associated dual functional satisfies the Palais-Smale condition, cf. Lemma~5.2 in~\cite{EvWe_dual} for the
  case $d\geq 3$ and p.10 in \cite{Eveq_plane} for the case $d=2$. Finally, we provide a class of nontrivial
  periodic operators $L$ and frequencies $\lambda\in\sigma(L)$ for which the assumptions (A1),(A2),(A3) hold
  so that Theorem~\ref{thm_LAP} and Corollary~\ref{cor_NLH} apply.
  
  \begin{lem}\label{lem:example}
     Let $d=2,\mu_1,\mu_2\in\R$ and $\eps>0$. Then there is a $\delta>0$ such that the two-dimensional
     Schr\"odinger operator $L=-\Delta+V_1(x_1)+V_2(x_2)$ satisfies (A1),(A2),(A3) at all frequencies
     $\lambda\in (\mu_1+\mu_2+\eps,\mu_1+\mu_2+\pi^2-\eps)$ provided $V_1,V_2$ are $1$-periodic, piecewise
     continuous with $\|V_1-\mu_1\|_{L^\infty([0,1])},\|V_2-\mu_2\|_{L^\infty([0,1])}<\delta$.  
  \end{lem}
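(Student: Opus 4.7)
The plan is to exploit the separable form $L=L_1+L_2$ with $L_i:=-\partial_{x_i}^2+V_i(x_i)$ acting on $L^2(\R)$. Tensorising the Floquet--Bloch decompositions of the two one-dimensional factors yields, for all $s=(s_1,s_2)\in\Z^2$ and $k=(k_1,k_2)\in B$, the product form
\[
  \psi_s(x,k)=\phi^{(1)}_{s_1}(x_1,k_1)\,\phi^{(2)}_{s_2}(x_2,k_2),\qquad \lambda_s(k)=\mu^{(1)}_{s_1}(k_1)+\mu^{(2)}_{s_2}(k_2),
\]
where $(\phi^{(i)}_s,\mu^{(i)}_s)$ denote the Floquet--Bloch eigendata of the 1D operator $L_i$. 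The extended functions from \eqref{eq:defn_LambdaPsi} then factorise as $\Lambda(k)=M_1(k_1)+M_2(k_2)$ and $\Psi(x,k)=\Phi^{(1)}(x_1,k_1)\Phi^{(2)}(x_2,k_2)$, and assumption (A1) is immediate.

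The key preparatory step is to isolate the first band of each~$L_i$. For $V_i\equiv\mu_i$ one has $M_i^0(\xi)=\xi^2+\mu_i$, so the free Fermi curve $\{k_1^2+k_2^2=\lambda-\mu_1-\mu_2\}$ is a circle of radius $r_0\in(\sqrt{\eps},\sqrt{\pi^2-\eps})$. I would pick $c>0$ with $\sqrt{\pi^2-\eps}<\pi-c$ and set $U:=(-\pi+c,\pi-c)^2$, so that this circle lies well inside $U$. For $\delta$ sufficiently small, a spectral gap argument (applicable because $k^2+\mu_i$ is strictly separated from $(k\pm 2\pi)^2+\mu_i$ on $[-\pi+c,\pi-c]$) together with Kato's analytic perturbation theory shows that $M_i$ is real analytic on $[-\pi+c,\pi-c]$ and $C^m$-close to $M_i^0$ for any $m$, and that the normalised eigenfunction $\Phi^{(i)}(\cdot,k)$ can be chosen real-analytic in $k$ as an element of $H^2(0,1)\hookrightarrow L^\infty(0,1)$; the lift from $L^2$- to $L^\infty$-regularity uses elliptic regularity applied to the $k$-differentiated eigen-equation. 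By continuity the perturbed Fermi curve $F_\lambda$ still lies inside~$U$.

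Assumption (A2) is then verified by comparison with the free case: $\Lambda$ is real analytic on $\ov U$, so the $\Lambda$-part of (A2)(a) is trivial, while the corresponding $L^\infty(\Omega,C^{N-1,\beta}(\ov U))$-bound for $\Psi=\Phi^{(1)}\Phi^{(2)}$ follows by tensorising the 1D regularity statement just obtained. For (A2)(b), compactness of $F_\lambda\subset\ov U$ is immediate; $\nabla\Lambda=(M_1'(k_1),M_2'(k_2))$ is $O(\delta)$-close to $(2k_1,2k_2)$ and hence nonzero on $F_\lambda$ because $|k|\geq r_0/2>0$ there; and the Hessian $\diag(M_1'',M_2'')$ is close to $2I$, which makes $\Lambda$ strictly convex on $U$ so that all of its regular level sets are strictly convex curves with positive curvature.

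For (A3) the product structure reduces the problem to a uniform $L^\infty(0,1)$-bound for normalised one-dimensional Floquet--Bloch eigenfunctions~$\phi$. For $\mu=\mu^{(i)}_s(k)\geq 1$, the energy $E(x):=|\phi'(x)|^2+\mu|\phi(x)|^2$ satisfies $E'(x)=2V_i(x)\Real(\bar\phi\phi')$ and hence $|E'|\leq\|V_i\|_{L^\infty}E$; Gr\"onwall on $[0,1]$ combined with $\int_0^1 E\,dx=2\mu-\int_0^1 V_i|\phi|^2\,dx\leq 2\mu+\|V_i\|_{L^\infty}$ (from $\|\phi\|_{L^2(0,1)}=1$ and the eigenvalue equation) yields $E(0)\leq C(\mu+1)$, and therefore $|\phi(x)|^2\leq E(x)/\mu\leq C'$ uniformly in $s,k$ for $\mu\geq 1$; the finitely many bands with $\mu<1$ are handled by compactness. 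The main obstacle I expect lies in (A2)(a): the $L^\infty_x\,C^{N-1,\beta}_k$-regularity of $\Psi$ requires lifting the Hilbert-space analyticity of $k\mapsto\Phi^{(i)}(\cdot,k)$ to pointwise-in-$x$ smoothness, which rests on uniform control of the $k$-derivatives of $\Phi^{(i)}$ in $H^2(0,1)$ throughout $\ov U$.
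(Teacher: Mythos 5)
Your overall architecture coincides with the paper's: separate variables, tensorise the one-dimensional Floquet--Bloch data, isolate the first band, and treat $V_i$ as a perturbation of the constant $\mu_i$. Two of your sub-arguments are genuinely different and both work. For the perturbation step the paper does not invoke Kato's analytic perturbation theory but instead applies the Implicit Function Theorem to an explicit map $F(\zeta,E,W)$ on $C^2(I,H^2_{per}([0,1];\C))\times C^2(I;\C)\times L^\infty([0,1])$, which yields directly the $C^2$-closeness $\|E_1^{V_i}-E_1^{\mu_i}\|_{C^2(I)}<\min\{\eps/4,1\}$ and hence $(E_1^{V_i})''>0$ on $F_\lambda$; your spectral-gap/Kato route gives the same conclusion (with the standard Cauchy-estimate argument to pass from analyticity on a complex neighbourhood to uniform $C^m$-closeness). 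For (A3) the paper cites the Il'in--Joo uniform bound $\|u\|_{L^\infty}\leq C\|u\|_{L^2}$ for $1$D eigenfunctions; your Gr\"onwall argument for $E(x)=|\phi'|^2+\mu|\phi|^2$ is a correct, self-contained substitute (I checked: $E'=2V_i\Real(\bar\phi\phi')$, $\int_0^1 E=2\mu-\int_0^1 V_i|\phi|^2$, and the low-lying bands follow from the trivial $H^2$-bound). Your curvature argument via strict convexity of $\Lambda$ is equivalent to the paper's explicit curvature formula.

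The one step you should not wave away with ``by continuity'' is the inclusion $F_\lambda\subset U$. Since $F_\lambda=\{k\in\R^2:\Lambda(k)=\lambda\}$ is taken in the extended-zone scheme, you must exclude contributions from \emph{all} higher bands, i.e.\ rule out $E_{l_1}^{V_1}(k_1)+E_{l_2}^{V_2}(k_2)=\lambda$ with $(l_1,l_2)\neq(1,1)$; your first-band Kato analysis on $U$ says nothing about $E_l^{V_i}$ for $l\geq 2$. This is exactly where the upper restriction $\lambda<\mu_1+\mu_2+\pi^2-\eps$ enters: by the min--max comparison $|E_l^{V_i}-E_l^{\mu_i}|\leq\delta$ for every $l$, and $E_2^{\mu_i}\geq\mu_i+\pi^2$, so $l_1\geq2$ would force $\lambda\geq\mu_1+\mu_2+\pi^2-2\delta$, a contradiction for $2\delta<\eps$ (the paper achieves the same via the interlacing $E_1^{V_i}(\pi)\leq E_2^{V_i}(\pi)$ from Reed--Simon, Theorem XIII.89, together with the strict monotonicity of $E_1^{V_i}$ on $[0,\pi]$, which also pins $|k_i|<\pi-\delta$). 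Once this all-band comparison is inserted, your proof is complete.
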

  
  
  \medskip
  
  The paper is organized as follows: In Section~\ref{sec:estimates} we analyze the mapping properties of the
  resolvent operators $\mathcal{R}^\eps(\lambda)$ for $\eps\in\R\sm\{0\}$ and identify the limit operators
  $\mathcal{R}^\pm(\lambda)$ as $\eps\to 0^\pm$. This will be done by splitting $\mathcal{R}^\eps(\lambda)$
  into a nonresonant and a resonant part the analysis of which is substantially different. We mention that this splitting already
  appears in the work of Radosz~\cite{Rad_Diss,Rad_LAP}.   
  The estimates from Section~\ref{sec:estimates} will then be used in Section~\ref{sec:proofs} where
  Theorem~\ref{thm_LAP} and Corollary~\ref{cor_NLH} are proved. Two results from Section~\ref{sec:estimates}
  with long and technical proofs will be discussed in Section~\ref{sec:proof_prop_I} and
  Section~\ref{sec:proof_prop_II}. In Section~\ref{sec:Proof_Lemma} we finally prove Lemma~\ref{lem:example}.
  Throughout the paper $c,C>0$ will denote positive numbers that may change from line to line.

\section{Estimates} \label{sec:estimates}
  
  Throughout this section we make use of the assumptions of Theorem~\ref{thm_LAP}. Following the strategy
  outlined above we split up the resolvent operators
  according to $\mathcal{R}^\eps(\lambda)= \mathcal{R}^\eps_1(\lambda)+
  \mathcal{R}_2^\eps(\lambda)$ where $\mathcal{R}^\eps_1(\lambda),\mathcal{R}^\eps_2(\lambda)$ define
  linear and bounded operators between appropriate Lebesgue spaces that converge as $\eps\to 0^\pm$. 
  In order to prove this assertion we first provide a representation formula for the resolvent using the
  eigenfunction expansion for the eigenvalue problems~\eqref{eq:FB_ev_problem} on the periodicity cell
  $\Omega=(0,1)^d$. With the aid of the Floquet-Bloch transform and the notation from  the first section
  we get the following result.
   
 \begin{prop}\label{prop:representation_formula}
   Let $\eps\in\R\sm\{0\}$. Then for all $f\in C_0^\infty(\R^d)$  we have 
   $$
     (\mathcal{R}^\eps(\lambda)f)(x)= \int_{\R^d} K^\eps(x,y)f(y)\,dy
   $$
   where the kernel function $K^\eps\in L^2_{loc}(\R^d\times\R^d;\C)$ is given by
  \begin{equation} \label{eq:def_Keps}
    K^\eps(x,y) 
    =  \avint_B\sum_{s\in\Z^d} \frac{\psi_s(x,k)\ov{\psi_s(y,k)}}{\lambda_s(k)-\lambda-i\eps}  \,dk.
  \end{equation} 
 \end{prop}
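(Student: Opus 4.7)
The strategy is to use the Floquet-Bloch transform $U$ from~\eqref{eq:Floqet_Bloch_transform} to diagonalise $L$: since $L\psi_s(\cdot,k)=\lambda_s(k)\psi_s(\cdot,k)$ on the $k$-quasiperiodic functions, and since $\{\psi_s(\cdot,k):s\in\Z^d\}$ is an orthonormal basis of $L^2(\Omega;\C)$ for each $k\in B$, the operator $L-\lambda-i\eps$ becomes, fibrewise in $k$, a bounded multiplication operator whose inverse exists thanks to $\lambda+i\eps\notin\sigma(L)$. Transforming the fibrewise solution back via $U^{-1}$ will yield the claimed kernel representation.

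In detail, for $f\in C_0^\infty(\R^d)$ I would define the coefficients
$$
\hat f_s(k):=\int_{\R^d} f(y)\,\ov{\psi_s(y,k)}\,dy,\qquad s\in\Z^d,\,k\in B.
$$
Using the quasiperiodicity $\psi_s(y+n,k)=e^{i\skp{k}{n}}\psi_s(y,k)$ together with the change of variables $y\mapsto y-n$ in each translation summand in the definition of $Uf(\cdot,k)$, one checks that $\hat f_s(k)$ is precisely the $\psi_s(\cdot,k)$-coefficient of $Uf(\cdot,k)$ in the ONB expansion over $\Omega$. The fibrewise diagonalisation reduces the resolvent equation $(L-\lambda-i\eps)u^\eps=f$ to the scalar relations
$$
(\lambda_s(k)-\lambda-i\eps)\,\widehat{u^\eps}_s(k)=\hat f_s(k)\qquad(s\in\Z^d,\,k\in B),
$$
which are uniquely solvable since $\eps\neq 0$. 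Inverting the Floquet-Bloch transform and substituting the explicit integral expression for $\hat f_s(k)$ then formally yields
$$
u^\eps(x)=\int_{\R^d}\left[\avint_B\sum_{s\in\Z^d}\frac{\psi_s(x,k)\,\ov{\psi_s(y,k)}}{\lambda_s(k)-\lambda-i\eps}\,dk\right]f(y)\,dy,
$$
which is the sought representation with the kernel $K^\eps$ from~\eqref{eq:def_Keps}.

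The main technical obstacle is to rigorously justify the Fubini-type interchanges above and simultaneously to establish the local $L^2$-regularity of $K^\eps$. The relevant estimates are the global denominator bound $|\lambda_s(k)-\lambda-i\eps|\geq|\eps|$ from $\eps\neq 0$, the quadratic growth $|\lambda_s(k)-\lambda-i\eps|\geq c|s|^2-C$ for $|s|$ large coming from~\eqref{eq:lowerbounds_lambdas}, Bessel's identity $\sum_s|\hat f_s(k)|^2=\|Uf(\cdot,k)\|_{L^2(\Omega)}^2$ together with the $L^2$-isometry of $U$, and the uniform $L^\infty$-bound $\|\psi_s(\cdot,k)\|_{L^\infty(\Omega)}\leq C$ furnished by assumption~(A3). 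I would assemble these via a Cauchy-Schwarz argument in the summation index $s$ and an $L^2$-Cauchy-sequence argument for the partial sums over $|s|\leq N$, exploiting that orthonormality of $\{\psi_s(\cdot,k)\}_s$ collapses the $L^2(\Omega\times\Omega)$-norm of the tail into $\avint_B\sum_{|s|>N}|\lambda_s(k)-\lambda-i\eps|^{-2}\,dk$, which tends to zero as $N\to\infty$. This yields convergence of the defining series for $K^\eps$ in $L^2_{\mathrm{loc}}(\R^d\times\R^d;\C)$ and simultaneously legitimises the Fubini exchanges; the resulting identity on $\Omega\times\Omega$ then extends to all of $\R^d\times\R^d$ by the quasiperiodic-extension convention built into $U^{-1}$.
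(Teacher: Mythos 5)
Your proposal follows essentially the same route as the paper: apply the Floquet--Bloch transform, expand the fibrewise resolvent equation in the orthonormal eigenbasis $\{\psi_s(\cdot,k)\}_s$, identify $\skp{Uf(\cdot,k)}{\psi_s(\cdot,k)}_{L^2(\Omega;\C)}$ with (a constant multiple of) $\int_{\R^d}f(y)\ov{\psi_s(y,k)}\,dy$ via quasiperiodicity and the compact support of $f$, and invert the transform with a Fubini exchange. Your additional remarks on justifying Fubini and the $L^2_{loc}$-membership of $K^\eps$ via \eqref{eq:lowerbounds_lambdas} and (A3) only make explicit what the paper leaves implicit, so the argument is correct.
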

 \begin{proof}
   We set $u^\eps:=\mathcal{R}^\eps(\lambda)f$. Then $u^\eps\in H^2(\R^d;\C)$
   satisfies 
   $$
     L u^\eps  - (\lambda+i\eps)u^\eps = f \quad\text{in }\R^d
   $$
   in the strong sense.  Now we apply the Floquet-Bloch transform which commutes with the differential
   operator $L$ thanks to periodicity assumption (A1). So for all $k\in B$ the function
   $Uu^\eps(\cdot,k)\in H^2(\Omega;\C)$ solves the Floquet-Bloch boundary value
   problem~\eqref{eq:FB_ev_problem}. Since $(\psi_s(\cdot,k))_{s\in\Z^d}$ is an orthonormal basis in
   $L^2(\Omega;\C)$ consisting of eigenfunctions for this problem with eigenvalues $\lambda_s(k)$, we get for
   all $k\in B$ and almost all $x\in \Omega$
   \begin{align} \label{eq:resolvent+Floquet}
     (Uu^\eps)(x,k)
     = \sum_{s\in\Z^d}\frac{\skp{(Uf)(\cdot,k)}{\psi_s(\cdot,k)}_{L^2(\Omega;\C)}}{\lambda_s(k)-\lambda-i\eps} 
     \psi_s(x,k).
   \end{align}
   Notice that for every given $k\in B$  this series converges in $L^2(\Omega;\C)$
   thanks to~\eqref{eq:lowerbounds_lambdas}. Since $f$ has compact support, we get
   \begin{align} \label{eq:Umformung}
    \begin{aligned}
      \skp{Uf(\cdot,k)}{\psi_s(\cdot,k)}_{L^2(\Omega;\C)}
      &= \int_{\Omega} Uf(y,k)\ov{\psi_s(y,k)} \,dy \\
      &= |B|^{-1/2} \int_{\Omega} \sum_{n\in\Z^d}  f(y+n)e^{-i\skp{k}{n}} \ov{\psi_s(y,k)} \,dy  \\
      &= |B|^{-1/2} \sum_{n\in\Z^d} \int_{\Omega} f(y+n)\ov{\psi_s(y+n,k)} \,dy  \\
      &= |B|^{-1/2} \int_{\R^d} f(y)\ov{\psi_s(y,k)} \,dy,
    \end{aligned} 
  \end{align}
  and thus  
  \begin{align*}
    (Uu^\eps)(x,k)
     &=    \sum_{s\in\Z^d}\frac{|B|^{-1/2} \int_{\R^d}
     f(y)\ov{\psi_s(y,k)} \,dy}{\lambda_s(k)-\lambda-i\eps} \psi_s(x,k)   \\
     &=   |B|^{-1/2} \int_{\R^d} \sum_{s\in\Z^d}\frac{\psi_s(x,k)\ov{\psi_s(y,k)}
     }{\lambda_s(k)-\lambda-i\eps} f(y) \,dy  
  \end{align*}
  for all $k\in B$ and almost all $x\in \Omega$. Finally, we apply the inverse
  Floquet-Bloch transform given by \eqref{eq:Floqet_Bloch_transform} and get from Fubini's Theorem
   \begin{align*}
     u^\eps(x)
     &= |B|^{-1/2}\int_B Uu^\eps(x,k) \,dk \\
     &= \int_{\R^d} \left( \avint_B 
      \sum_{s\in\Z^d}\frac{\psi_s(x,k)\ov{\psi_s(y,k)}
     }{\lambda_s(k)-\lambda-i\eps}   \,dk \right) f(y) \,dy \\
     &= \int_{\R^d} K^\eps(x,y)f(y)\,dy,
   \end{align*}
   which is all we had to show.
 \end{proof}

  We note that an explicit formula for $K^\eps$ does not seem to be available
  except for the special case of the Helmholtz operator $L-\lambda=-\Delta-\lambda$ for $\lambda>0$,
  see~\eqref{eq:upm_formula}. The representation formula from
  Proposition~\ref{prop:representation_formula} in fact holds for more general functions~$f$.
  Based on estimates involving $K^\eps$ we will see that the integral representation for
  $\mathcal{R}^\eps(\lambda)f$ also makes sense for $f\in L^p(\R^d)$ if $p$ is chosen suitably. To see
  this, we split the sum and the integration into one part where $\lambda_s(k)-\lambda$ is bounded away from
  zero and a second part where $\lambda_s(k)-\lambda$ is close to zero. We will call the associated operators
  the nonresonant part (indexed by~1) or the resonant part (indexed by~$2$) of the resolvent, respectively.
  For $\rho>0$ such that $F_\tau\subset U$ for $|\tau-\lambda|\leq \rho$ we can choose a cutoff function
  $\chi\in C_0^\infty(\R^d)$ with the properties
  \begin{equation}\label{eq:Def_chi}
    0\leq \chi\leq 1,\qquad \supp(\chi)\subset B_\rho(0),\qquad  \chi\equiv 1 \text{ on }B_{\rho/2}(0).
 \end{equation}  
  In the proof of Proposition~\ref{prop:resonant_pointwise_decay} it will become clear, how $\rho$ should be
  chosen depending only on the geometry of the Fermi surfaces $F_\tau$ for $\tau\approx\lambda$.  
  Then the splitting  
 $$
   \mathcal{R}^\eps(\lambda) f
   = \mathcal{R}^\eps_{1}(\lambda)f + \mathcal{R}^{\eps}_{2}(\lambda)f 
 $$
 holds for $f\in C_0^\infty(\R^d)$ where the operators on the right hand side are  defined via
 \begin{align}\label{eq:Reps12_defn}
   \begin{aligned}
   (\mathcal{R}^\eps_{1}(\lambda)f)(x)  &:=  \int_{\R^d} K^\eps_{1}(x,y) f(y) \,dy, \\
    (\mathcal{R}^\eps_{2}(\lambda)f)(x)  &:= \int_{\R^d} K^\eps_{2}(x,y)f(y)\,dy
    \end{aligned} 
  \end{align}
  and $K^\eps_{1},K^\eps_2\in L^2_{loc}(\R^d\times\R^d;\C)$ are given by 
  \begin{align} \label{eq:def_Keps_12}
    \begin{aligned}
    K^\eps_{1}(x,y)&:=  \avint_B \sum_{s\in\Z^d}
    \frac{1-\chi(\lambda_s(k)-\lambda)}{\lambda_s(k)-\lambda-i\eps}  \psi_s(x,k)\ov{\psi_s(y,k)}\,dk, \\
    K^\eps_{2}(x,y) 
    &:=  \avint_B \sum_{s\in\Z^d} 
    \frac{\chi(\lambda_s(k)-\lambda)}{\lambda_s(k)-\lambda-i\eps}  \psi_s(x,k)\ov{\psi_s(y,k)}\,dk.
    \end{aligned} 
  \end{align}
  In view of \eqref{eq:lowerbounds_lambdas} we find that $K^\eps_{2}(x,y)$ should be seen as a finite
  sum of singular terms (as $\eps\to 0^\pm$) whereas $K^\eps_{1}(x,y)$ is an infinite series of regular terms.
  Moreover, we observe
  $$
    K_j^\eps(x,y)=\ov{K_j^{-\eps}(y,x)},\quad K_j^\eps(x+m,y)=K_j^\eps(x,y-m) \quad\text{for }
    \eps\in\R\sm\{0\},x,y\in\R^d,m\in\Z^d \;\;(j=1,2).
  $$
  Using the assumptions (A1),(A2),(A3) we will show that, roughly speaking, the resonant part is responsible
  for low decay rates at infinity because it maps into Lebesgue spaces $L^q(\R^d;\C)$ with certain
  exponents $q>2$. On the contrary the nonresonant part will give the upper bound for $q$ from
  \eqref{eq:admissible_pq}. In the following we study the mapping properties of
  $\mathcal{R}^\eps_{1}(\lambda),\mathcal{R}^\eps_{2}(\lambda)$ for small $|\eps|$ that will be used in
  Section~\ref{sec:proofs} when we prove Theorem~\ref{thm_LAP} and Corollary~\ref{cor_NLH}.
  
\subsection{Estimates for the nonresonant part}

  Using the equiboundedness of the eigenfunctions $\psi_s$ from assumption (A3) we first prove an
  estimate for the family of sequences $(\skp{Uh(\cdot,k)}{\psi_s(\cdot,k)}_{L^2(\Omega;\C)})_{s\in\Z^d}$
  where $k$ ranges over the Brillouin zone $B$ and $h\in L^{r'}(\R^d)$ for some $r\in [2,\infty]$. For notational
  convenience we suppress $k$ as well as the index $s\in\Z^d$ of these sequences. These estimates involve the
  Banach spaces $L^r(B\times \Z^d)$ for $r\in [2,\infty]$ which we define to be the Lebesgue space with
  exponent~$r$ induced by the product of the Lebesgue measure on $B\subset\R^d$ and the counting measure on $\Z^d$. The corresponding norm is given by
  \begin{align*}
    \|(\skp{Uh}{\psi_s}_{L^2(\Omega;\C)})\|_{L^r(B\times\Z^d;\C)}
    &:= \Big( \int_B \sum_{s\in\Z^d} |\skp{Uh(\cdot,k)}{\psi_s(\cdot,k)}_{L^2(\Omega;\C)}|^r \,dk \Big)^{1/r}
  \\
  \intertext{for $2\leq r<\infty$ and}
    \|(\skp{Uh}{\psi_s}_{L^2(\Omega;\C)})\|_{L^\infty(B\times\Z^d;\C)}
    &:= \sup_{(k,s)\in B\times\Z^d} |\skp{Uh(\cdot,k)}{\psi_s(\cdot,k)}_{L^2(\Omega;\C)}|.
  \end{align*}
  Here, $\sup$ stands for the essential supremum. In view of \eqref{eq:Umformung} the following result
  resembles the Hausdorff-Young inequality for Fourier series.

  \begin{prop}\label{prop:Hausdorff_Young}
    There is a $C>0$ such that for all $k\in B$ and $2\leq r\leq \infty$ and $h\in L^{r'}(\R^d)$
    $$
      \|(\skp{Uh}{\psi_s}_{L^2(\Omega;\C)})\|_{L^r(B\times\Z^d;\C)}
      \leq C \|h\|_{L^{r'}(\R^d)}.
    $$
  \end{prop}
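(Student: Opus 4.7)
\medskip

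The plan is to mimic the proof of the classical Hausdorff-Young inequality by establishing the two endpoint estimates $r=2$ and $r=\infty$, and then interpolating by Riesz-Thorin. Throughout, I regard the map $T:h\mapsto \bigl(k\mapsto (\skp{Uh(\cdot,k)}{\psi_s(\cdot,k)}_{L^2(\Omega;\C)})_{s\in\Z^d}\bigr)$ as a linear operator acting on suitable Lebesgue spaces.

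\medskip

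\textbf{Endpoint $r=\infty$.} For $h\in L^1(\R^d)$, the computation carried out in~\eqref{eq:Umformung} extends verbatim (the interchange of sum and integral there only needs absolute integrability, which follows from $h\in L^1$). Hence
\[
  |\skp{Uh(\cdot,k)}{\psi_s(\cdot,k)}_{L^2(\Omega;\C)}|
  = |B|^{-1/2}\Bigl|\int_{\R^d} h(y)\ov{\psi_s(y,k)}\,dy\Bigr|
  \leq |B|^{-1/2}\|\psi_s(\cdot,k)\|_{L^\infty(\Omega;\C)}\|h\|_{L^1(\R^d)}.
\]
Taking the essential supremum over $(k,s)\in B\times\Z^d$ and invoking assumption~(A3), one obtains
$\|Th\|_{L^\infty(B\times\Z^d;\C)}\leq C\|h\|_{L^1(\R^d)}$.

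\medskip

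\textbf{Endpoint $r=2$.} For fixed $k\in B$ the family $\{\psi_s(\cdot,k):s\in\Z^d\}$ is an orthonormal basis of $L^2(\Omega;\C)$, so Parseval yields
\[
  \sum_{s\in\Z^d}|\skp{Uh(\cdot,k)}{\psi_s(\cdot,k)}_{L^2(\Omega;\C)}|^2
  =\|Uh(\cdot,k)\|_{L^2(\Omega;\C)}^2.
\]
Integrating over $k\in B$ and using that the Floquet--Bloch transform $U$ is an isometry from $L^2(\R^d;\C)$ to $L^2(\Omega\times B;\C)$ (as recorded after~\eqref{eq:Floqet_Bloch_transform}), one concludes $\|Th\|_{L^2(B\times\Z^d;\C)}=\|h\|_{L^2(\R^d)}$.

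\medskip

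\textbf{Interpolation.} Since $(B\times\Z^d,dk\otimes\#)$ is a $\sigma$-finite measure space, the mixed Lebesgue spaces $L^r(B\times\Z^d;\C)$ form a standard complex interpolation scale. Applying the Riesz--Thorin convexity theorem to the linear operator $T$ at the endpoints $(r',r)=(1,\infty)$ and $(r',r)=(2,2)$ immediately yields
\[
  \|Th\|_{L^r(B\times\Z^d;\C)}\leq C\|h\|_{L^{r'}(\R^d)}
  \qquad(2\leq r\leq\infty),
\]
which is the desired inequality.

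\medskip

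I do not expect any genuine obstacle: the endpoint $r=\infty$ reduces to the pointwise bound on the eigenfunctions coming from (A3), while $r=2$ is a direct consequence of Parseval combined with the isometry property of $U$. The only minor point of care is that formula~\eqref{eq:Umformung} is derived in the excerpt for $f\in C_0^\infty(\R^d)$; for the $r=\infty$ estimate one first verifies it for $h\in L^1(\R^d)$ by approximation, which is immediate since the right-hand side is continuous in $h\in L^1$.
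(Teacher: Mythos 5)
Your proposal is correct and follows essentially the same route as the paper: Parseval plus the isometry of the Floquet--Bloch transform at $r=2$, the $L^\infty$ bound on the eigenfunctions from (A3) at $r=\infty$ (the paper bounds $|Uh(x,k)|\leq\sum_n|h(x-n)|$ directly rather than unfolding via~\eqref{eq:Umformung}, but this is the same computation), and interpolation between the two endpoints.
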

  \begin{proof}
    For $r=2$ we have the identity
    \begin{align*}
      \|(\skp{Uh}{\psi_s}_{L^2(\Omega;\C)})\|_{L^2(B\times\Z^d;\C)}^2
      &= \int_B \sum_{s\in\Z^d} |\skp{Uh(\cdot,k)}{\psi_s(\cdot,k)}_{L^2(\Omega;\C)}|^2 \,dk \\
      &=\int_B  \|Uh(\cdot,k)\|_{L^2(\Omega;\C)}^2 \,dk \\
      &= \|Uh\|_{L^2(\Omega\times B;\C)}^2 \\
      &= \|h\|_{L^2(\R^d)}^2.
    \end{align*}
    Here we used that the functions $\psi_s(\cdot,k)$ form an orthonormal basis in $L^2(\Omega;\C)$ and that
    the Floquet-Bloch transform $U:L^2(\R^d;\C)\to L^2(\Omega\times B;\C)$ is an isometry. In
    the proof of the inequality for $r=\infty$ we use (A3), so let $C>0$ be given with $|\psi_s(x,k)|\leq C$
    for all $x\in\Omega,k\in B,s\in\Z^d$. Then we get from
    \eqref{eq:Floqet_Bloch_transform}
    \begin{align*}
       \|(\skp{Uh}{\psi_s})\|_{L^\infty(B\times\Z^d;\C)}
      &= \sup_{(k,s)\in B\times \Z^d} |\skp{Uh(\cdot,k)}{\psi_s(\cdot,k)}_{L^2(\Omega;\C)}| \\
      &\leq C  \sup_{(k,s)\in B\times \Z^d} \int_\Omega |Uh(x,k)|\,dx \\
      &\leq C \int_\Omega \sum_{n\in\Z^d} |h(x-n)|\,dx \\
      &\leq C \|h\|_{L^1(\R^d)}.
    \end{align*}
    Interpolating both estimates yields the result.
  \end{proof}

  Next we use the estimates from Proposition~\ref{prop:Hausdorff_Young} to prove some mapping properties of
  the nonresonant part of the resolvent operator.

  \begin{lem}\label{lem:mapping_properties_nonresonant}
    Let $p,q$ satisfy
    \begin{align} \label{eq:pq_nonresonant}
      d\geq 2 \quad\text{and}\quad 0\leq \frac{1}{p}-\frac{1}{q}<\frac{2}{d},\quad 1\leq p\leq 2\leq
      q\leq \infty. 
    \end{align} 
    Then there is a $C>0$ such that for all $\eps\in\R$ and $f\in C_0^\infty(\R^d)$ the following estimates
    hold
    \begin{align*}
      \|\mathcal{R}^\eps_{1}(\lambda)f\|_{L^q(\R^d;\C)}  
      &\leq C\|f\|_{L^p(\R^d)}, \\
      \|\mathcal{R}^\eps_{1}(\lambda)f-\mathcal{R}^0_{1}(\lambda)f\|_{L^q(\R^d;\C)} 
      &\leq C\eps\|f\|_{L^p(\R^d)}.
    \end{align*}
  \end{lem}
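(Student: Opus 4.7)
The plan is to pass to the Floquet-Bloch picture and reduce the $L^p\to L^q$ bound for $\mathcal{R}^\eps_1(\lambda)$ to a multiplier estimate on $L^r(B\times\Z^d)$ with $1/r=1/p-1/q$. Proceeding exactly as in the derivation of \eqref{eq:resolvent+Floquet}, but inserting the cutoff $1-\chi(\lambda_s(k)-\lambda)$, one finds that for $f\in C_0^\infty(\R^d)$
$$
  U(\mathcal{R}^\eps_1(\lambda)f)(x,k)
  = \sum_{s\in\Z^d} m_s^\eps(k)\,c_s(k)\,\psi_s(x,k),
  \qquad c_s(k):=\skp{Uf(\cdot,k)}{\psi_s(\cdot,k)}_{L^2(\Omega;\C)},
$$
with the multiplier $m_s^\eps(k):=\frac{1-\chi(\lambda_s(k)-\lambda)}{\lambda_s(k)-\lambda-i\eps}$.

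The decisive input is an $L^r$ estimate for this multiplier. Because $1-\chi$ vanishes on $\{|\lambda_s(k)-\lambda|<\rho/2\}$, one has $|m_s^\eps(k)|\leq 2/\rho$ uniformly in $\eps$, while the lower bound $\lambda_s(k)\geq c|s|^2-C$ from~\eqref{eq:lowerbounds_lambdas} gives $|m_s^\eps(k)|\leq C/|s|^2$ for large $|s|$. Combining both yields
$$
  |m_s^\eps(k)|\leq \frac{C}{1+|s|^2}\qquad (s\in\Z^d,\,k\in B,\,\eps\in\R),
$$
so that $\|m^\eps\|_{L^r(B\times\Z^d)}^r \leq C\sum_{s\in\Z^d}(1+|s|^2)^{-r}$ is finite precisely when $r>d/2$, i.e., precisely when $1/p-1/q<2/d$. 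Proposition~\ref{prop:Hausdorff_Young} applied to $f$ gives $\|c\|_{L^{p'}(B\times\Z^d;\C)}\leq C\|f\|_{L^p(\R^d)}$ for $p\in[1,2]$, and by duality (or equivalently, by Riesz--Thorin interpolation between the trivial $L^1\to L^\infty$ bound that follows from (A3) and the $L^2\to L^2$ isometry of $U$) the map $(d_s(k))\mapsto |B|^{-1/2}\int_B\sum_s d_s(k)\psi_s(\cdot,k)\,dk$ (with quasiperiodic extension as in~\eqref{eq:Floqet_Bloch_transform}) is bounded from $L^{q'}(B\times\Z^d;\C)$ to $L^q(\R^d;\C)$ for $q\in[2,\infty]$. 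Sandwiching these with Hölder's inequality in $L^r(B\times\Z^d)$ produces
$$
  \|\mathcal{R}^\eps_1(\lambda)f\|_{L^q(\R^d;\C)}
  \leq C\|m^\eps c\|_{L^{q'}(B\times\Z^d;\C)}
  \leq C\|m^\eps\|_{L^r}\|c\|_{L^{p'}}
  \leq C\|f\|_{L^p(\R^d)}.
$$

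For the second estimate I would use the identity
$$
  m_s^\eps(k)-m_s^0(k) = \frac{i\eps\,(1-\chi(\lambda_s(k)-\lambda))}{(\lambda_s(k)-\lambda-i\eps)(\lambda_s(k)-\lambda)},
$$
which by the same reasoning satisfies $|m_s^\eps(k)-m_s^0(k)|\leq C\eps/(1+|s|^2)$ and hence $\|m^\eps-m^0\|_{L^r(B\times\Z^d)}\leq C\eps$; inserting this into the Hölder step above yields the claimed $O(\eps)$ bound. The only slightly non-routine ingredient is the dual Hausdorff--Young inequality, which needs some care with the quasiperiodic extension underlying $U^{-1}$; everything else is a mechanical Hölder sandwich in which the Sobolev-type threshold $1/p-1/q<2/d$ enters precisely to guarantee convergence of $\sum_{s\in\Z^d}(1+|s|^2)^{-r}$.
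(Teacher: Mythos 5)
Your argument is correct and is essentially the paper's proof in dual form: the paper pairs $\mathcal{R}^\eps_1(\lambda)f$ against $g\in C_0^\infty(\R^d)$ and applies Proposition~\ref{prop:Hausdorff_Young} to both $f$ and $g$ together with H\"older in $L^r(B\times\Z^d)$ for $\frac{1}{r}=\frac{1}{p}-\frac{1}{q}$, whereas you invoke the adjoint (synthesis) bound $L^{q'}(B\times\Z^d;\C)\to L^q(\R^d;\C)$ directly; the multiplier bound from \eqref{eq:lowerbounds_lambdas}, the threshold $r>\frac d2$, and the $O(\eps)$ difference estimate are identical. No gap.
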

  \begin{proof}
    Applying \eqref{eq:Umformung} to $f\in C_0^\infty(\R^d)$ we get
    \begin{align*}
      &\int_{\R^d} g(x) (\mathcal{R}^\eps_{1}(\lambda)f)(x)\,dx \\ 
      &= \int_{\R^d} g(x) \Big(\int_{\R^d} K^\eps_{1}(x,y)f(y)\,dy\Big)\,dx \\
      &=  \avint_B \sum_{s\in\Z^d} \alpha_s^\eps(k) 
      \Big(\int_{\R^d} f(y)\ov{\psi_s(y,k)} \,dy\Big)   \Big(\int_{\R^d} g(x)\psi_s(x,k)  \,dx\Big)  \,dk   \\
      &=  \int_B \sum_{s\in\Z^d} \alpha_s^\eps(k) 
      \skp{Uf(\cdot,k)}{\psi_s(\cdot,k)}_{L^2(\Omega;\C)} 
      \overline{\skp{Ug(\cdot,k)}{\psi_s(\cdot,k)}_{L^2(\Omega;\C)}} \,dk 
    \end{align*}
    where  
    $$
      \alpha_s^\eps(k) := \frac{1-\chi(\lambda_s(k)-\lambda)}{\lambda_s(k)-\lambda-i\eps}.
    $$
    Now let $r>\frac{d}{2}$ be given by $\frac{1}{p}-\frac{1}{q}=\frac{1}{r}$.     
    Due to \eqref{eq:lowerbounds_lambdas} we find a $C>0$ such that for all $\eps\in\R$  we have 
    $\|(\alpha_s^\eps)\|_{L^r(B\times\Z^d;\C)}\leq C$. So  H\"older's inequality and
    Proposition~\ref{prop:Hausdorff_Young} (we have $p',q\geq 2$) yield
    \begin{align*}
      &\int_B \sum_{s\in\Z^d} \big| 
      \alpha_s^\eps(k) \skp{Uf(\cdot,k)}{\psi_s(\cdot,k)}_{L^2(\Omega;\C)} 
      \overline{\skp{Ug(\cdot,k)}{\psi_s(\cdot,k)}_{L^2(\Omega;\C)}}\big|\,dk \\
      &\leq  \|(\alpha_s^\eps)\|_{L^r(B\times\Z^d;\C)}
      \|(\skp{Uf}{\psi_s}_{L^2(\Omega;\C)})\|_{L^{p'}(B\times\Z^d;\C)}
      \|(\skp{Ug}{\psi_s}_{L^2(\Omega;\C)})\|_{L^q(B\times\Z^d;\C)}  \\
      &\leq C \|g\|_{L^{q'}(\R^d)}\|f\|_{L^p(\R^d)}.
    \end{align*}
    This entails
    $$
       \Big|\int_{\R^d} g(x) (\mathcal{R}^\eps_{1}(\lambda)f)(x)\,dx\Big| 
       \leq   C \|g\|_{L^{q'}(\R^d)}\|f\|_{L^p(\R^d)} 
    $$
    for all $f,g\in C_0^\infty(\R^d)$. The same estimates hold for $g$ replaced by $i g$. Therefore, since
    $C_0^\infty(\R^d;\C)$ is dense in $L^{q'}(\R^d;\C)$ and the dual of $L^{q'}(\R^d;\C)$ is $L^q(\R^d;\C)$
    because of $q\geq 2$, we get the first asserted estimate. The same way we get the second estimate from the
    bound $\|(\alpha_s^\eps-\alpha_s^0)\|_{L^r(B\times\Z^d;\C)}\leq C\eps$.
\end{proof}

\subsection{Estimates for the resonant part}
 
  Now we discuss  the mapping properties of the integral operator 
  $$
    (\mathcal{R}^\eps_{2}(\lambda)f)(x) = \int_{\R^d} K^\eps_{2}(x,y)f(y)\,dy
  $$ 
  where $K^\eps_{2}$ was defined in~\eqref{eq:def_Keps_12}. Our first result is a
  pointwise estimate for the kernel function, which is the most difficult result in this paper. 
  Its proof is based on a refinement of the method of (non-)stationary phase and its
  application to decay estimates for oscillatory integrals over nicely curved hypersurfaces in $\R^d$, namely
  the Fermi surfaces $F_\lambda$ described by assumption (A2). 
    
 \begin{prop} \label{prop:resonant_pointwise_decay}
   There are $\rho>0$ and measurable functions $K_2^\pm:\R^d\times\R^d\to\C$ and a $C>0$ such that for all 
   $\eps\in\R\sm\{0\},x,y\in\R^d,m\in\Z^d$ we have $K_2^\pm(x,y)=\ov{K_2^\mp(y,x)}$,
   $K^\pm(x+m,y)=K^\pm(x,y-m)$ as well as
   \begin{align}\label{eq:resonant_pointwise_est}
     \begin{aligned}
     |K_2^\eps(x,y)| &\leq  C (1+|x-y|)^{\frac{1-d}{2}},\\
     |K_2^{\eps}(x,y)-K_2^\pm(x,y)| &\leq C \eps^\beta (1+|x-y|)^{\frac{1-d}{2}} \quad\text{as }\eps\to
     0^\pm.
     \end{aligned}
   \end{align}
 \end{prop}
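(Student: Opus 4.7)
The strategy is to rewrite $K^\eps_2$ as a single Lebesgue integral over $\R^d$ via the extended-zone scheme, apply the coarea formula to reduce to oscillatory surface integrals over the Fermi surfaces, and then combine a uniform version of the decay estimate for Fourier transforms of surface measures on hypersurfaces with positive Gaussian curvature (Stein's theorem) with a quantitative form of the Sokhotski--Plemelj formula.

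\emph{Step 1 (Representation).} Since $B+2\pi s$ ($s\in\Z^d$) tiles $\R^d$, the definitions in \eqref{eq:defn_LambdaPsi} give, after unfolding the sum in \eqref{eq:def_Keps_12},
$$
   K^\eps_2(x,y) = |B|^{-1}\int_{\R^d} \frac{\chi(\Lambda(\xi)-\lambda)}{\Lambda(\xi)-\lambda-i\eps}\,\Psi(x,\xi)\overline{\Psi(y,\xi)}\,d\xi.
$$
Writing $x=x_0+m$, $y=y_0+n$ with $x_0,y_0\in\Omega$ and $m,n\in\Z^d$, the quasi-periodicity of $\psi_s$ together with $e^{-2\pi i\skp{s}{m}}=1$ yields $\Psi(x_0+m,\xi)=e^{i\skp{\xi}{m}}\Psi(x_0,\xi)$, so that
$$
   K^\eps_2(x,y) = |B|^{-1}\int_{\R^d} \frac{\chi(\Lambda(\xi)-\lambda)}{\Lambda(\xi)-\lambda-i\eps}\,e^{i\skp{\xi}{m-n}}\,a(x_0,y_0,\xi)\,d\xi
$$
with the bounded amplitude $a(x_0,y_0,\xi):=\Psi(x_0,\xi)\overline{\Psi(y_0,\xi)}$, supported in a neighbourhood of the $2\pi\Z^d$-translates of $F_\lambda$.

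\emph{Step 2 (Coarea formula).} Choose $\rho>0$ so small that $F_{\lambda+\tau}\subset U$ is a smooth, compact hypersurface with positive Gaussian curvature, depending smoothly on $\tau$ via the implicit function theorem, for all $|\tau|\le\rho$. Because $\nabla\Lambda\ne 0$ on this tubular region, the coarea formula gives
$$
   K^\eps_2(x,y) = |B|^{-1}\int_{-\rho}^{\rho} \frac{\chi(\tau)}{\tau-i\eps}\,J(x_0,y_0,m-n;\tau)\,d\tau,
$$
where
$$
   J(x_0,y_0,z;\tau):=\int_{F_{\lambda+\tau}}\frac{e^{i\skp{\xi}{z}}a(x_0,y_0,\xi)}{|\nabla\Lambda(\xi)|}\,d\mathcal{H}^{d-1}(\xi).
$$

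\emph{Step 3 (Uniform oscillatory decay and H\"older dependence in $\tau$).} By parametrizing $F_{\lambda+\tau}$ as a graph over $F_\lambda$ via the implicit function theorem, one obtains $J(x_0,y_0,z;\tau)$ as an oscillatory integral over a fixed surface whose phase and amplitude depend on $\tau$ as smoothly as (A2)(a) permits. Applying Stein's decay estimate for oscillatory integrals over compact hypersurfaces with non-vanishing Gaussian curvature, and using that $C^{N-1,\beta}$ regularity of $\Psi$ with $N>\frac{d+1}{2}$ suffices (cf. Littman's version), one derives, uniformly in $x_0,y_0\in\Omega$ and $|\tau|\le\rho$,
$$
   |J(x_0,y_0,z;\tau)|\le C(1+|z|)^{\frac{1-d}{2}},\qquad |J(x_0,y_0,z;\tau)-J(x_0,y_0,z;0)|\le C|\tau|^{\beta}(1+|z|)^{\frac{1-d}{2}}.
$$
The H\"older estimate in $\tau$ comes from the $C^{N,\beta}$ regularity of $\Lambda$, which controls the $\tau$-derivatives of the parametrization and of $|\nabla\Lambda|^{-1}$ in $J$.

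\emph{Step 4 (Quantitative Sokhotski--Plemelj and definition of $K_2^\pm$).} With $J$ in hand, I define
$$
   K_2^\pm(x,y):=|B|^{-1}\Bigl[\,\mathrm{p.v.}\!\int_{-\rho}^{\rho}\frac{\chi(\tau)J(x_0,y_0,m-n;\tau)}{\tau}\,d\tau\pm i\pi\chi(0)J(x_0,y_0,m-n;0)\Bigr].
$$
The symmetry $K_2^\pm(x,y)=\overline{K_2^\mp(y,x)}$ is inherited from the behaviour of the formula under complex conjugation and the $\eps\leftrightarrow-\eps$ swap, and the $\Z^d$-invariance $K_2^\pm(x+m,y)=K_2^\pm(x,y-m)$ is built into the dependence on $m-n$. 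Writing
$$
   \frac{1}{\tau-i\eps}=\frac{\tau}{\tau^2+\eps^2}+\frac{i\eps}{\tau^2+\eps^2},
$$
the uniform bound on $J$ combined with standard principal value bounds gives the first estimate in \eqref{eq:resonant_pointwise_est}. For the second estimate one splits $J(\tau)=J(0)+(J(\tau)-J(0))$; the $J(0)$-part reduces to the explicit scalar limit, while for the remainder the substitution $\tau=\eps u$ together with the H\"older bound above yields
$$
   \int_{-\rho}^{\rho}\frac{\eps|\tau|^{\beta}}{\tau^2+\eps^2}\,d\tau\le C\eps^{\beta},
$$
producing the factor $\eps^{\beta}(1+|m-n|)^{(1-d)/2}$ and hence the claimed rate (noting $|m-n|\sim|x-y|$ uniformly in $x_0,y_0\in\Omega$).

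\emph{Main obstacle.} The delicate step is Step 3: proving the decay bound for $J(x_0,y_0,z;\tau)$ uniformly in $\tau$ \emph{and} in a H\"older-continuous fashion with the decay factor $(1+|z|)^{(1-d)/2}$ preserved. This requires a careful application of Stein's theorem to the entire family of surfaces $F_{\lambda+\tau}$ simultaneously, tracking exactly how the $C^{N-1,\beta}$ regularity of the amplitude and the $C^{N,\beta}$ regularity of the phase propagate through the implicit function parametrization of the foliation and through the integration-by-parts argument underlying the stationary phase estimate. Everything downstream is essentially a quantitative scalar computation.
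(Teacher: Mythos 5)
Your skeleton coincides with the paper's: unfold the $k$-sum to a single integral over $\R^d$ via \eqref{eq:defn_LambdaPsi}, apply the coarea formula to obtain $K_2^\eps(x,y)=\int\frac{\chi(\tau-\lambda)}{\tau-\lambda-i\eps}\,a_{x,y}(\tau)\,d\tau$ with $a_{x,y}(\tau)$ an oscillatory integral over $F_\tau$, prove $|a_{x,y}(\lambda)|\le C(1+|x-y|)^{\frac{1-d}{2}}$ together with a H\"older-in-$\tau$ bound carrying the same decay factor, and close with a quantitative Sokhotski--Plemelj lemma (Proposition~\ref{prop:Plemelj_formula}). Your Steps 1, 2 and 4, the definition of $K_2^\pm$, and the symmetry/periodicity claims all match the paper.

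The gap is that your Step 3 \emph{is} the proposition: it is asserted, not proved, and it does not follow from an off-the-shelf citation of Stein or Littman. Those theorems give $(1+|z|)^{\frac{1-d}{2}}$ decay for a single fixed hypersurface with positive curvature; here one needs (i) uniformity of the constant over the whole foliation $\{F_{\lambda+\tau}\}_{|\tau|\le\rho}$ and over the direction $v=\frac{x-y}{|x-y|}$, and, much more seriously, (ii) the increment bound $|J(\cdot;\tau)-J(\cdot;0)|\le C|\tau|^\beta(1+|z|)^{\frac{1-d}{2}}$. Point (ii) is genuinely delicate: the stationary points of the phase on $F_{\lambda+\tau}$ move with $\tau$, so the leading stationary-phase term carries a factor $e^{i|z|\skp{v}{k_\tau}}$ whose $\tau$-increment is a priori only $O(\min(1,|z|\,|\tau|))$, and one must organize the expansion so that every piece has $\tau$-increment $O(|\tau|^\beta)$ times a power of $|z|$ no worse than $\frac{1-d}{2}$ (accepting a small loss $|z|^\alpha$, which is exactly what the hypothesis $N>\frac{d+1}{2}$ in (A2) is there to absorb). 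The paper therefore proves your Step 3 from scratch: a nonstationary-phase lemma with H\"older dependence on the parameter (Proposition~\ref{prop:nonstationary_phase}), the explicit Fourier transform of the Fresnel phase $e^{i\sigma\skp{x}{Ax}}$ (Proposition~\ref{prop:FT_Fresnelphase}), and quantitative stationary-phase remainder estimates (Propositions~\ref{prop:estimate} and~\ref{prop:stationary_phase}), assembled via a Morse-lemma change of variables whose domain and Jacobian bounds must be shown uniform in $(\tau,v)$ on each connected component $R_2^{j,\pm}$ of directions. Your closing paragraph correctly locates this as the main obstacle, but naming the obstacle is not the same as overcoming it; as written, the proposal is a plan rather than a proof.
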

 
 The proof of Proposition
 \ref{prop:resonant_pointwise_decay} is very long, so we prefer to present it later in
 Section~\ref{sec:proof_prop_I}. The estimate \eqref{eq:resonant_pointwise_est} already yields some mapping
 properties of $\mathcal{R}^\eps_{2}(\lambda)$ between Lebesgue spaces, but those are not strong enough to
 prove Theorem~\ref{thm_LAP}. As in the
 proof of Theorem~2.2 in~\cite{KeRuSo_uniform} or Theorem~6 in~\cite{Gut_nontrivial} an estimate based on
 spectral properties has to be added in order to improve them via interpolation, i.e., with the aid of the
 Riesz-Thorin Interpolation Theorem. In~\cite{KeRuSo_uniform, Gut_nontrivial} this strategy
 applies in the context of elliptic differential operators with constant coefficients. For instance in the
 case $L=-\Delta-1$ one finds that the kernel function associated with the operator $L-i\eps$ is given by
 $K^\eps(x,y)=\Phi^\eps(x-y)$ with $\mathcal{F}(\Phi^\eps)(\xi)=(|\xi|^2-1-i\eps)^{-1}$. The estimates for
 the nonresonant part $(||\xi|^2-1|\geq c>0)$ of the associated integral operator are based on Bessel
 potential estimates -- their counterpart in the periodic setting was presented in the previous section.
 The resonant part  of the kernel function $K_2^\eps(x,y)$, which corresponds to $||\xi|^2-1|\leq c$ in the
 case of the Helmholtz operator, is estimated differently.
 It is split into infinitely many pieces $K^{\eps,j}_{2}$ that only depend on the
 behaviour of $K^\eps_2$ in the dyadic annuli $2^{j-1}\leq |x-y|<2^j$ for $j\in\N$. The $j$-dependent mapping
 properties of these infinitely many integral operators result from the pointwise decay of $K^\eps_2$ and
 from estimates based on the Stein-Tomas theorem, see for instance (36) in \cite{Gut_nontrivial} for the
 decomposition into annular regions and Lemma~1 in \cite{Gut_nontrivial} for the resulting $j$-dependent
 estimates on these regions. For the Stein-Tomas theorem we refer to \cite{Tom_restriction} or p.375,p.414 for $d\geq 3,d=2$ in
 \cite{Ste_harmonic_analysis}.
 
 \medskip
 
 In the case of general periodic elliptic differential operators the Fourier transform is not suitable and a
 replacement for the above-mentioned estimates has to be found. In our
 situation it turns out that estimates for the Floquet-Bloch transforms (similar to the ones in the
 paper~\cite{Kach_ST_theorem}) of $K^{\eps}_2(x,y)$ for $(x,y)$ in the $j$th dyadic shell are helpful. These
 dyadic shells should be seen as the the analogues of the annuli used in the constant coefficient case. More precisely, we define the grid
 points $R_0:=\{0\},R_j:= \{m\in\Z^d : 2^{j-1}\leq |m_i|< 2^j \text{ for }i=1,\ldots,d\}$ and then, for each
 $j\in\N_0$ and $\eps\in\R\sm\{0\}$,
  \begin{align} \label{eq:def_Repsj}
    \begin{aligned}
    (\mathcal{R}^{\eps,j}_{2}(\lambda)f)(x) 
    &:= \int_{\R^d} K^{\eps,j}_{2}(x,y)f(y)\,dy 
    \qquad\text{where } \\
    K^{\eps,j}_{2}(x,y) &:=  K^\eps_{2}(x,y)1_{R_j}([x]-[y]). 
  \end{aligned}
  \end{align}
  Here, $[x]:=([x_1],\ldots,[x_d])\in\Z^d$ denotes the componentwise floor function. This definition
  assures $K^{\eps,j}_{2}(x+m,y)=K^{\eps,j}_{2}(x,y-m)$ for all $x,y\in\R^d,m\in\Z^d$ so that
  $K^{\eps,j}_2$ inherits this important symmetry property from $K^\eps_{2}$. Analogously, we define 
  \begin{align} \label{eq:def_Rpmj}
    \begin{aligned}
    (\mathcal{R}^{\pm,j}_{2}(\lambda)f)(x) 
    &:= \int_{\R^d} K^{\pm,j}_{2}(x,y)f(y)\,dy 
    \qquad\text{where } \\
    K^{\pm,j}_{2}(x,y) &:=  K^\pm_{2}(x,y)1_{R_j}([x]-[y]).
  \end{aligned}
  \end{align}
  First we provide the
  estimates based on the pointwise bounds from Proposition~\ref{prop:resonant_pointwise_decay}.
  
\begin{prop}\label{prop:L1_linfty_estimate}
  There is a $C>0$ such that we have for all $\eps\in\R\sm\{0\}$ and $1\leq p\leq q\leq \infty$ and all $f\in
  L^p(\R^d)$
  \begin{align*}
    \|\mathcal{R}^{\eps,j}_{2}(\lambda)f\|_{L^q(\R^d;\C)} 
    &\leq C 2^{j(\frac{1+d}{2}+\frac{d}{q}-\frac{d}{p})} \|f\|_{L^p(\R^d)} &&\text{for all } j\in\N_0
    \text{ and } \\
    \|\mathcal{R}^{\eps,j}_{2}(\lambda)f-\mathcal{R}^{\pm,j}_{2}(\lambda)f\|_{L^q(\R^d;\C)} 
    &\leq C \eps^\beta 2^{j(\frac{1+d}{2}+\frac{d}{q}-\frac{d}{p})} \|f\|_{L^p(\R^d)} &&\text{for all }
    j\in\N_0 \text{ as }\eps\to 0^\pm.
  \end{align*}
\end{prop}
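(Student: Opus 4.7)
The plan is to combine the dyadic-shell geometry built into the definition of $K^{\eps,j}_2$ with the pointwise kernel bound of Proposition~\ref{prop:resonant_pointwise_decay}, and then to interpolate between two endpoint estimates. First I would observe that $R_j$ contains exactly $(2 \cdot 2^{j-1})^d = 2^{jd}$ integer points for $j \geq 1$ (and the single point $0$ for $j=0$), so for every fixed $x \in \R^d$ the slice $\{y \in \R^d : [x]-[y] \in R_j\}$ has Lebesgue measure $2^{jd}$ (resp.\ $1$ when $j=0$), and the symmetric statement holds in $x$. On this support one has $|x-y| \asymp 2^j$, so Proposition~\ref{prop:resonant_pointwise_decay} furnishes an $\eps$-uniform pointwise estimate $|K^{\eps,j}_2(x,y)| \leq C\, 2^{j(1-d)/2}$ that also holds at $j=0$.

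From here the argument is standard kernel-operator bookkeeping. A first endpoint is an $L^p \to L^\infty$ bound obtained from H\"older's inequality,
$$
|(\mathcal{R}^{\eps,j}_2(\lambda)f)(x)|
\leq \|K^{\eps,j}_2(x,\cdot)\|_{L^{p'}(\R^d)}\, \|f\|_{L^p(\R^d)}
\leq C\, 2^{j((1-d)/2 + d/p')}\, \|f\|_{L^p(\R^d)},
$$
and $(1-d)/2 + d/p' = (1+d)/2 - d/p$. The second endpoint is an $L^p \to L^p$ bound of size $C\, 2^{j(1+d)/2}$ coming from Schur's test: both $\sup_x \int |K^{\eps,j}_2(x,y)|\,dy$ and its symmetric counterpart are controlled by $C\, 2^{j(1-d)/2} \cdot 2^{jd}$, using the symmetry $K^{\eps,j}_2(x,y) = \overline{K^{-\eps,j}_2(y,x)}$ together with the $\eps$-uniformity of the pointwise bound to dispatch the second supremum. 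Interpolating by the Riesz-Thorin theorem with the source $L^p$ fixed and the target running from $L^p$ to $L^\infty$ then yields the claimed bound with exponent $\frac{1+d}{2} + \frac{d}{q} - \frac{d}{p}$ for every $q \in [p,\infty]$.

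The convergence estimate is proved by running the same scheme on the kernel $K^{\eps,j}_2 - K^{\pm,j}_2$, which has the identical support structure $1_{R_j}([x]-[y])$ and, thanks to the second line of Proposition~\ref{prop:resonant_pointwise_decay}, picks up an additional factor $\eps^\beta$ in the pointwise bound; this factor simply passes unchanged through H\"older, Schur and Riesz-Thorin. The main analytic obstacle, namely establishing the decay rate $(1+|x-y|)^{(1-d)/2}$ via oscillatory-integral analysis over the Fermi surface, has already been surmounted in Proposition~\ref{prop:resonant_pointwise_decay}, so the present proposition is essentially a bookkeeping step combining that pointwise decay with the combinatorics of the dyadic partition and with standard interpolation.
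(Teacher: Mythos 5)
Your proof is correct and essentially the same as the paper's: both rest on the pointwise bound $|K^{\eps,j}_2(x,y)|\leq C\,2^{j(1-d)/2}1_{|x-y|\leq C2^j}$ from Proposition~\ref{prop:resonant_pointwise_decay} combined with the measure $\asymp 2^{jd}$ of the dyadic shell. The paper simply invokes Young's convolution inequality for the dominating kernel $C\,2^{j(1-d)/2}1_{|z|\leq C2^j}$, which packages in one step the H\"older/Schur endpoints and Riesz--Thorin interpolation that you carry out by hand.
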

\begin{proof} 
  We only show the first estimate, the proof of the second being similar.
  For $x,y\in\R^d$ such that $[x]-[y]\in R_j$ we have the inequality $c\cdot 2^j\leq |x-y|\leq C\cdot 2^j$ for
  some positive $c,C$. In particular, Proposition~\ref{prop:resonant_pointwise_decay} gives 
  $$
    |K^{\eps,j}_{2}(x,y)|\leq  C  (1+|x-y|)^{\frac{1-d}{2}} 1_{|x-y|\leq C 2^j} 
    \leq C 2^{\frac{j(1-d)}{2}} 1_{|x-y|\leq C 2^j} \qquad\text{for all } j\in\N_0.
  $$ 
  Hence, Young's convolution inequality yields the desired estimate.
\end{proof}

  We continue with an $L^2-L^2-$estimate for $\mathcal{R}^{\eps,j}_{2}(\lambda)$ 
  based on a pointwise estimate of the Floquet-Bloch transform of the kernel function
  $K^{\eps,j}_{2}(\cdot,y)$ which relies on the regularity assumptions for the Fermi surfaces from
  assumption (A2). Since it is quite long, we defer the proof to Section~\ref{sec:proof_prop_II}.  

\begin{prop} \label{prop:estimate_FBtransform_kernel}
  For all  $\delta>0$ there is a $C_{\delta}>0$ such that for all $\eps\in\R\sm\{0\}$ we have
  \begin{align*}
    \sup_{x,y\in\Omega, l\in B} \big| U(K^{\eps,j}_{2}(\cdot,y))(x,l)\big| 
    &\leq C_{\delta} 2^{j(1+\delta)}  &&\text{for all } j\in\N_0 \text{ and } \\
    \sup_{x,y\in\Omega, l\in B} \big| U(K^{\eps,j}_{2}(\cdot,y)-K^{\pm,j}_2(\cdot,y))(x,l)\big| 
    &\leq C_\delta \eps^\beta 2^{j(1+\delta)}  &&\text{for all } j\in\N_0 \text{ as }\eps\to 0^\pm.
  \end{align*}
\end{prop}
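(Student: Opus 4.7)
My strategy is to use the series representation of $K_2^\eps$ from~\eqref{eq:def_Keps_12} to express $U(K_2^{\eps,j}(\cdot,y))(x,l)$ as an oscillatory integral over the Fermi surface carrying a dyadic Dirichlet-type kernel, and then to estimate it by combining Stein's stationary-phase estimate for surfaces of positive Gaussian curvature with a lattice restriction estimate in the spirit of~\cite{Kach_ST_theorem}.

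The first step is to unfold the transform. For $x,y\in\Omega$ one has $[x]=[y]=0$, so $1_{R_j}([x-n]-[y])=1_{R_j}(-n)$; combining this with the symmetry $K_2^\eps(x+n,y)=K_2^\eps(x,y-n)$ noted after~\eqref{eq:def_Keps_12}, the identity $-R_j=R_j$, and the quasi-periodicity $\psi_s(y-n,k)=e^{-i\skp{k}{n}}\psi_s(y,k)$, one obtains
\begin{equation*}
  U(K_2^{\eps,j}(\cdot,y))(x,l) = \avint_B\sum_{s\in\Z^d} \frac{\chi(\lambda_s(k)-\lambda)}{\lambda_s(k)-\lambda-i\eps}\psi_s(x,k)\overline{\psi_s(y,k)}\,D_j(k-l)\,dk,
\end{equation*}
where $D_j(\theta):=\sum_{n\in R_j}e^{i\skp{\theta}{n}}$ is a product of one-dimensional dyadic Dirichlet kernels. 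Passing to the extended-zone scheme $\kappa=k+2\pi s$, using the $2\pi$-periodicity of $D_j$ together with~\eqref{eq:defn_LambdaPsi}, and applying the coarea formula with $\Lambda$, the expression collapses (up to a constant factor) to $\int_{|\tau-\lambda|\leq\rho}\frac{\chi(\tau-\lambda)}{\tau-\lambda-i\eps}J(\tau)\,d\tau$ with
\begin{equation*}
  J(\tau):=\int_{F_\tau}\frac{\Psi(x,\kappa)\overline{\Psi(y,\kappa)}}{|\nabla\Lambda(\kappa)|}\,D_j(\kappa-l)\,d\sigma(\kappa).
\end{equation*}
The remaining singular $\tau$-integral is then handled by Sokhotski--Plemelj-type analysis, which simultaneously identifies $K_2^\pm$ and yields the $\eps^\beta$-convergence rate in the difference estimate from the $\beta$-H\"older continuity of $\tau\mapsto J(\tau)$, itself inherited from the $C^{N,\beta}$-regularity of $\Lambda,\Psi$ on $\overline U$ provided by (A2)(a).

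The crucial step, and the main obstacle, is the uniform estimate $|J(\tau)|\leq C_\delta 2^{j(1+\delta)}$. Expanding $D_j$ yields $J(\tau)=\sum_{n\in R_j}e^{-i\skp{n}{l}}I(n,\tau)$ with $I(n,\tau):=\int_{F_\tau}e^{i\skp{n}{\kappa}}a(\kappa)\,d\sigma(\kappa)$ and bounded amplitude $a=\Psi(x,\cdot)\overline{\Psi(y,\cdot)}/|\nabla\Lambda|$. Stein's estimate based on (A2)(b) gives $|I(n,\tau)|\leq C(1+|n|)^{-(d-1)/2}$, but absolute summation over $R_j$ only produces the far too weak bound $2^{j(d+1)/2}$. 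The cancellation in $\sum_{n\in R_j}e^{-i\skp{n}{l}}I(n,\tau)$ must therefore be recovered by a discrete Stein--Tomas-type restriction inequality for lattice frequencies localised near the curved surface $F_\tau$ as developed in~\cite{Kach_ST_theorem}, which should furnish the sharp scale $2^j$. The logarithmic factor $\log(2^j)^d\lesssim C_\delta 2^{j\delta}$ stemming from the $L^1(B)$-norm of the Dirichlet kernels accounts for the arbitrary $\delta>0$, and uniformity in $x,y,l$ follows from assumption (A3) together with the boundedness of $\Psi$ on $\overline U$.
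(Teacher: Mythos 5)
Your reduction of $U(K^{\eps,j}_2(\cdot,y))(x,l)$ to a singular integral $\int\frac{\chi(\tau-\lambda)}{\tau-\lambda-i\eps}J(\tau)\,d\tau$ with $J(\tau)$ a surface integral over $F_\tau$ carrying the dyadic Dirichlet kernel, followed by a Sokhotski--Plemelj argument, matches the paper's proof exactly. The problem is the step you yourself flag as crucial: the uniform bound $|J(\tau)|\leq C_\delta 2^{j(1+\delta)}$. You correctly observe that expanding $D_j$ and summing the stationary-phase bounds $|I(n,\tau)|\lesssim(1+|n|)^{-(d-1)/2}$ absolutely only gives $2^{j(d+1)/2}$, and you then appeal to a ``discrete Stein--Tomas-type restriction inequality for lattice frequencies localised near $F_\tau$'' which ``should furnish the sharp scale $2^j$.'' This is not a proof of the key estimate; no such restriction theorem is established anywhere, and the paper explicitly remarks that no analogue of the Stein--Tomas theorem is known in the Floquet--Bloch setting. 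As written, the central inequality of the proposition is asserted rather than proved.

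The actual argument is far more elementary and uses neither curvature nor any cancellation among the lattice points $n\in R_j$. Keep $g_j(\xi)=\sum_{m\in R_j}e^{im\xi}$ intact as a difference of products of one-dimensional Dirichlet kernels, $\prod_p D_j(\xi_p)-\prod_p D_{j-1}(\xi_p)$, parametrize $F_\tau$ locally as a graph $\kappa=(z,\phi_t(z))$ with $z$ ranging over a bounded set $V\subset\R^{d-1}$, and estimate the surface integral of $|h_{x,y}|$ by $C\int_V|g_j(l'-z,\,l_d-\phi_t(z))|\,dz$. The factor in the graph direction is bounded in sup norm by $C2^j$, while each of the $d-1$ tangential factors contributes its $L^1$-norm over a compact interval, which is only $O(j)$; hence $\int_V|g_j|\leq C\,2^j\,j^{d-1}\leq C_\delta 2^{j(1+\delta)}$ (this is Proposition~\ref{prop:gj}). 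A second point you gloss over: for the Plemelj step you need the modulus of continuity of $\tau\mapsto J(\tau)$ to be integrable against $dt/t$ \emph{with the correct $j$-dependence}. Attributing the H\"older continuity merely to the $C^{N,\beta}$-regularity of $\Lambda,\Psi$ is not enough, since differentiating $D_j$ in $\tau$ costs a factor $2^j$; one needs the companion estimate $\int_K|g_j(\xi',s)-g_j(\xi',t)|\,d\xi'\leq C2^{j(1+\delta)}|s-t|^{\delta/2}$, which again follows from the product structure of the Dirichlet kernels, not from restriction theory.
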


 This leads to the following uniform estimates.

\begin{prop} \label{prop:L2_l2_estimate}
  For all $\delta>0$ there is $C_{\delta}>0$ such that for all $\eps\in\R\sm\{0\},f\in L^2(\R^d)$ we have 
  \begin{align*}
    \| \mathcal{R}^{\eps,j}_{2}(\lambda)f\|_{L^2(\R^d;\C)}
    &\leq C_{\delta} 2^{j(1+\delta)} \|f\|_{L^2(\R^d)} &&\text{for all } j\in\N_0 \text{ and } \\
    \| \mathcal{R}^{\eps,j}_{2}(\lambda)f-\mathcal{R}^{\pm,j}_2(\lambda)f\|_{L^2(\R^d;\C)}
    &\leq C_\delta \eps^\beta 2^{j(1+\delta)} \|f\|_{L^2(\R^d)} &&\text{for all } j\in\N_0 \text{ as }\eps\to
    0^\pm.
  \end{align*}
\end{prop}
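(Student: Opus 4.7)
The plan is to exploit the $\Z^d$-translation invariance of $\mathcal{R}^{\eps,j}_{2}(\lambda)$ --- a direct consequence of the symmetry property $K^{\eps,j}_{2}(x+m,y)=K^{\eps,j}_{2}(x,y-m)$ for $m\in\Z^d$ --- which means that, after conjugation with the Floquet--Bloch transform~$U$, this operator decomposes as a direct integral of fiber operators $T_k$ acting on $L^2(\Omega;\C)$. Since $U:L^2(\R^d;\C)\to L^2(\Omega\times B;\C)$ is an isometry, the sought $L^2$-bound on $\mathcal{R}^{\eps,j}_{2}(\lambda)$ reduces to a uniform bound on $\|T_k\|_{L^2(\Omega;\C)\to L^2(\Omega;\C)}$, and the pointwise kernel estimate of Proposition~\ref{prop:estimate_FBtransform_kernel} is tailor-made for this purpose.

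The first step is to identify $T_k$ explicitly. Starting from the series definition of $U$ applied to $\mathcal{R}^{\eps,j}_{2}(\lambda)f$, performing the shift $y\mapsto y+n$ (justified by the periodicity of $K^{\eps,j}_{2}$), splitting the resulting $y$-integral over translates $\Omega+m$ of the periodicity cell, and invoking the quasi-periodic extension $(Uf)(y+m,k)=e^{i\skp{k}{m}}(Uf)(y,k)$, one arrives after rearrangement at
\begin{equation*}
  U(\mathcal{R}^{\eps,j}_{2}(\lambda)f)(x,k) = \int_\Omega U(K^{\eps,j}_{2}(\cdot,y))(x,k)\,(Uf)(y,k)\,dy,
\end{equation*}
where one crucially uses the identity $\sum_{m\in\Z^d} K^{\eps,j}_{2}(x,y+m)e^{i\skp{k}{m}} = U(K^{\eps,j}_{2}(\cdot,y))(x,k)$, again a consequence of the translation symmetry. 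Hence $T_k$ is an integral operator on $L^2(\Omega;\C)$ with kernel $(x,y)\mapsto U(K^{\eps,j}_{2}(\cdot,y))(x,k)$.

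Next, since $\Omega=(0,1)^d$ has unit measure, the pointwise estimate of Proposition~\ref{prop:estimate_FBtransform_kernel} yields at once the Hilbert--Schmidt-type bound $\|T_k\|_{L^2(\Omega;\C)\to L^2(\Omega;\C)}\leq C_\delta 2^{j(1+\delta)}$, uniformly in $k\in B$. Combining this with Plancherel for the Floquet--Bloch transform gives
\begin{equation*}
  \|\mathcal{R}^{\eps,j}_{2}(\lambda)f\|_{L^2(\R^d;\C)}^2 = \int_B \|T_k(Uf)(\cdot,k)\|_{L^2(\Omega;\C)}^2 \,dk \leq \sup_{k\in B}\|T_k\|^2\,\|f\|_{L^2(\R^d)}^2,
\end{equation*}
which is the first assertion. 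The second assertion follows by applying the very same diagonalization to the difference $\mathcal{R}^{\eps,j}_{2}(\lambda)-\mathcal{R}^{\pm,j}_{2}(\lambda)$, whose kernel $K^{\eps,j}_{2}-K^{\pm,j}_{2}$ inherits the periodicity property and is controlled by the second bound of Proposition~\ref{prop:estimate_FBtransform_kernel}.

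The only subtle point in this plan is the bookkeeping required to identify the fiber operator~$T_k$ through the right reindexings of the Floquet--Bloch series; once that identification is in place, everything reduces to a clean application of Plancherel on $L^2(\Omega\times B;\C)$ and of the already-established pointwise kernel estimate, so no substantial obstacle is expected.
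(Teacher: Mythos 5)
Your proposal is correct and follows essentially the same route as the paper: the convolution identity $U(\mathcal{R}^{\eps,j}_{2}(\lambda)f)(x,k)=|B|^{1/2}\int_\Omega U(K^{\eps,j}_{2}(\cdot,y))(x,k)\,Uf(y,k)\,dy$ is exactly what the paper derives, and bounding the fiber operator via the pointwise kernel estimate of Proposition~\ref{prop:estimate_FBtransform_kernel} together with the isometry of $U$ is the paper's argument (phrased there via H\"older rather than a Hilbert--Schmidt bound, which is an immaterial difference since $|\Omega|=1$). Only a normalization constant $|B|^{1/2}$ is missing from your fiber formula, which is harmless.
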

\begin{proof}
  Again, we only prove the first estimate since it relies on the first inequality from
  Proposition~\ref{prop:estimate_FBtransform_kernel} in the same way as the second estimate relies on the
  second inequality from Proposition~\ref{prop:estimate_FBtransform_kernel}. First we recall the convolution
  formula for the Floquet-Bloch transform. By the quasiperiodicity of the eigenfunctions we have  $K^{\eps,j}_{2}(x+n,y)=K^{\eps,j}_{2}(x,y-n)$ for all
  $x,y\in\R^d,n\in\Z^d$, see \eqref{eq:def_Repsj}. This yields the following formula for $x\in\Omega,l\in B$
  and $f\in C_0^\infty(\R^d)$:
  \begin{align*}
    U( \mathcal{R}^{\eps,j}_{2}(\lambda) f)(x,l) 
    &= U\Big( \int_{\R^d} K^{\eps,j}_{2}	(\cdot,y)f(y)\,dy\Big)(x,l)  \\
    &= |B|^{-1/2} \sum_{m\in\Z^d} e^{iml} \int_{\R^d} K^{\eps,j}_{2}(x-m,y)f(y)\,dy \\
    &= |B|^{-1/2} \sum_{m,n\in\Z^d} e^{i(m-n)l} e^{inl} \int_{\Omega} K^{\eps,j}_{2}(x-m,y-n)f(y-n)\,dy \\
    &= |B|^{-1/2} \sum_{m,n\in\Z^d} \int_{\Omega} e^{i(m-n)l}  K^{\eps,j}_{2}(x-(m-n),y)  f(y-n) e^{inl} \,dy \\
    &= |B|^{1/2} \int_{\Omega} U(K^{\eps,j}_{2}(\cdot,y))(x,l)   Uf(y,l) \,dy \\
  \intertext{
  and hence by Proposition~\ref{prop:estimate_FBtransform_kernel}
  }
    \big| U( \mathcal{R}^{\eps,j}_{2}(\lambda) f)(x,l) \big|
    &\leq |B|^{1/2} C_\delta 2^{j(1+\delta)} \int_{\Omega} |Uf(y,l)| \,dy. \\
  \intertext{
  Taking the $L^2-$norm over $\Omega\times B$ and using the isometry property of the Floquet
  transform as well as H\"older's inequality we arrive at
  }
    \| \mathcal{R}^{\eps,j}_{2}(\lambda)f \|_{L^2(\R^d;\C)}
    &= \| U(\mathcal{R}^{\eps,j}_{2}(\lambda)f) \|_{L^2(\Omega\times B;\C)} \\
    &\leq  (|B||\Omega|)^{1/2} C_{\delta} 2^{j(1+\delta)}    
      \Big(\int_B \Big( \int_\Omega |Uf(y,l)|\,dy \Big)^2\,dl\Big)^{1/2} \\ 
    &\leq (2\pi)^{d/2}C_{\delta} 2^{j(1+\delta)}   \|Uf\|_{L^2(\Omega\times B;\C)} \\ 
    &=  (2\pi)^{d/2} C_{\delta} 2^{j(1+\delta)}  \|f\|_{L^2(\R^d)}.
  \end{align*} 
  By density of $C_0^\infty(\R^d)$ in $L^2(\R^d)$ the result follows.
\end{proof}


  By interpolation we deduce the following estimates.

\begin{lem}\label{lem:resonant_mappingproperties}
  Assume that $p,q\in [1,\infty]$ satisfy
   \begin{align}\label{eq:resonant_admissibleexponents}
      \begin{aligned}
      &1\leq p \leq  \frac{2(d+1)}{d+3}, \quad &&\frac{2dp}{2+p(d-3)}<q\leq \infty \qquad\text{or} \\
      &\frac{2(d+1)}{d+3}< p < \frac{2d}{d+1},  \quad  &&\frac{2p}{2d-p(d+1)}<q\leq \infty.
    \end{aligned}
  \end{align}
  Then there are $C>0>\gamma$ such that we have for all $\eps\in\R\sm\{0\}$ and $f\in C_0^\infty(\R^d)$
  \begin{align*}
    \|\mathcal{R}^{\eps,j}_{2}(\lambda)f\|_{L^q(\R^d;\C)}
    &\leq C 2^{\gamma j} \|f\|_{L^p(\R^d)} &&\text{for all } j\in\N_0 \text{ and }  \\
    \|\mathcal{R}^{\eps,j}_{2}(\lambda)f-\mathcal{R}^{\pm,j}_{2}(\lambda)f\|_{L^q(\R^d;\C)}
    &\leq C \eps^\beta 2^{\gamma j} \|f\|_{L^p(\R^d)}&&\text{for all } j\in\N_0 \text{ as }\eps\to 0^\pm. 
  \end{align*}
\end{lem}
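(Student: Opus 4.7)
The plan is to apply the Riesz-Thorin Interpolation Theorem to the uniform bounds from Proposition~\ref{prop:L1_linfty_estimate} and Proposition~\ref{prop:L2_l2_estimate}. Since both propositions supply parallel estimates for $\mathcal{R}^{\eps,j}_2(\lambda)$ and for $\mathcal{R}^{\eps,j}_2(\lambda)-\mathcal{R}^{\pm,j}_2(\lambda)$ (differing only by a factor $\eps^\beta$ and sharing the same $j$-dependent exponents), a single interpolation argument will yield both asserted inequalities simultaneously. Throughout, the $L^2$-$L^2$ estimate contributes the positive $j$-exponent $1+\delta$, whereas for $1\le\tilde p\le\tilde q\le\infty$ the $L^{\tilde p}$-$L^{\tilde q}$ estimate contributes $\gamma_0(\tilde p,\tilde q)=\tfrac{1+d}{2}+\tfrac{d}{\tilde q}-\tfrac{d}{\tilde p}$, which is strictly negative near the corner $(\tilde p,\tilde q)=(1,\infty)$.

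I would use two complementary interpolation schemes. \emph{Scheme A} is designed for targets with $1\le p\le 2$ and $p\le q\le p'$ (at or below the self-dual line $1/p+1/q=1$); it interpolates between the $(1,q_0)$-endpoint of Proposition~\ref{prop:L1_linfty_estimate} with free $q_0\in[1,\infty]$ and the $(2,2)$-endpoint of Proposition~\ref{prop:L2_l2_estimate}. Setting $\theta=2(p-1)/p$ and choosing $q_0$ so that $(1/p,1/q)=(1-\theta)(1,1/q_0)+\theta(1/2,1/2)$, one computes the interpolated $j$-exponent
\[
\gamma_A(p,q,\delta)\;=\;\tfrac{3-d}{2}+\tfrac{d}{q}-\tfrac{1+2\delta}{p}+2\delta,
\]
which is strictly negative, for $\delta$ sufficiently small, precisely when $q>2dp/(2+p(d-3))$. \emph{Scheme B} targets $(p,q)$ with $q\ge p'$ and interpolates between the $(p_0,\infty)$-endpoint of Proposition~\ref{prop:L1_linfty_estimate} and $(2,2)$. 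Setting $\theta=2/q$ and determining $p_0$ analogously, a short computation gives
\[
\gamma_B(p,q,\delta)\;=\;\tfrac{1+d}{2}-\tfrac{d}{p}+\tfrac{1+2\delta}{q},
\]
negative for $\delta$ small exactly when $q>2p/(2d-p(d+1))$.

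For each admissible $(p,q)$ in~\eqref{eq:resonant_admissibleexponents} I apply whichever scheme is geometrically feasible. The threshold $p=\tfrac{2(d+1)}{d+3}$ is exactly the $p$-value at which the two curves $q=\tfrac{2dp}{2+p(d-3)}$ and $q=\tfrac{2p}{2d-p(d+1)}$ meet the self-dual line at the common point $q=\tfrac{2(d+1)}{d-1}=p'$. For $p\le\tfrac{2(d+1)}{d+3}$, Scheme A handles the portion $q\le p'$; above the self-dual line Scheme B is automatically in force because its defining condition is weaker than $q>p'$ in this regime. For $p>\tfrac{2(d+1)}{d+3}$, the lemma's condition $q>\tfrac{2p}{2d-p(d+1)}$ already forces $q>p'$, so Scheme B alone suffices. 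Choosing $\delta>0$ small enough in each case yields a strictly negative interpolated exponent $\gamma$, and Riesz-Thorin then delivers both inequalities of the lemma with the same constant $C$ and the same $\gamma$.

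The main obstacle will be the algebraic bookkeeping: translating the interpolation parameters back into explicit conditions on $(p,q)$ and verifying that the two schemes together cover exactly~\eqref{eq:resonant_admissibleexponents}. A secondary technicality is that the free $\delta>0$ in Proposition~\ref{prop:L2_l2_estimate} perturbs both critical curves by $O(\delta)$, and the strict inequalities in~\eqref{eq:resonant_admissibleexponents} are needed precisely to accommodate a small positive $\delta$ while keeping $\gamma<0$.
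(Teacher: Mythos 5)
Your proposal is correct and follows essentially the same route as the paper: Riesz--Thorin interpolation between the $L^{\tilde p}$--$L^{\tilde q}$ bounds of Proposition~\ref{prop:L1_linfty_estimate} and the $L^2$--$L^2$ bound of Proposition~\ref{prop:L2_l2_estimate}, applied in parallel to $\mathcal{R}^{\eps,j}_2(\lambda)$ and to the difference $\mathcal{R}^{\eps,j}_2(\lambda)-\mathcal{R}^{\pm,j}_2(\lambda)$. Your two schemes (endpoint $\tilde p=1$ below the self-dual line, endpoint $\tilde q=\infty$ above it) are exactly the two branches of the paper's optimized choice $\theta=\max\{\tfrac{q-2}{q},\tfrac{2-p}{p}\}$, and your exponents $\gamma_A,\gamma_B$ and the coverage analysis of~\eqref{eq:resonant_admissibleexponents} check out.
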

\begin{proof}
   Let $p,q$ satisfy~\eqref{eq:resonant_admissibleexponents}.
   By Proposition~\ref{prop:L1_linfty_estimate} and Proposition~\ref{prop:L2_l2_estimate}, for all $\delta>0$
   and $\tilde p,\tilde q$ such that $1\leq \tilde p\leq p$, $q\leq\tilde q\leq \infty$ the following
   estimates hold for $f\in C_0^\infty(\R^d)$
  \begin{align*}
    \|\mathcal{R}^{\eps,j}_{2}(\lambda)f\|_{L^2(\R^d;\C)}
    &\leq C_\delta 2^{j(1+\delta)}\|f\|_{L^2(\R^d)},\\
    \|\mathcal{R}^{\eps,j}_{2}(\lambda)f\|_{L^{\tilde q}(\R^d;\C)}
    &\leq C_\delta 2^{j(\frac{1+d}{2}+\frac{d}{\tilde q}-\frac{d}{\tilde p})}\|f\|_{L^{\tilde p}(\R^d)}.
  \end{align*}
  Interpolating these estimates provides the estimate
  \begin{align*}
    \|\mathcal{R}^{\eps,j}_{2}(\lambda)f\|_{L^q(\R^d;\C)}
    &\leq C_\delta 2^{j(1+\delta+\theta(\frac{d-1}{2}-\delta+\frac{d}{\tilde
    q}-\frac{d}{\tilde p}))}\|f\|_{L^p(\R^d)}
  \end{align*}
  provided $\theta\in [0,1]$ and $\tilde p,\tilde q$ are chosen according to $\frac{1}{p} =
  \frac{\theta}{\tilde p} + \frac{1-\theta}{2}, \frac{1}{q} = \frac{\theta}{\tilde q} + \frac{1-\theta}{2}$.
  Solving the latter equation for $\tilde q\in [q,\infty]$ was arbitrary, we find
  \begin{align*}
    \|\mathcal{R}^{\eps,j}_{2}(\lambda)f\|_{L^q(\R^d;\C)}
    &\leq C_\delta 2^{j(1-\frac{d}{2}+\frac{d}{q}+\delta+\theta(d-\frac{1}{2}-\delta-\frac{d}{\tilde
    p}))}\|f\|_{L^p(\R^d)}
  \end{align*}
  provided $\frac{1}{p}=\frac{\theta}{\tilde p}+\frac{1-\theta}{2}$ and
  $1\geq \theta\geq \frac{q-2}{2}$. Solving the equation for $\tilde p\in [1,p]$ we get
  \begin{align*}
    \|\mathcal{R}^{\eps,j}_{2}(\lambda)f\|_{L^q(\R^d;\C)}
    &\leq C_\delta 2^{j(1+\frac{d}{q}-\frac{d}{p}\theta \frac{d-1}{2} + (1-\theta)\delta)}\|f\|_{L^p(\R^d)}
  \end{align*}
  provided $1\geq \theta\geq \max\{\frac{q-2}{q},\frac{2-p}{p}\}$. Choosing now $\delta>0$
  sufficiently small and $\theta$ smallest possible, we observe that a negative $\gamma$ with the required
  properties exists provided $$
    1+\frac{d}{q}-\frac{d}{p} + \max\Big\{\frac{q-2}{q},\frac{2-p}{p}\Big\} \cdot \frac{d-1}{2}<0 
  $$
  or equivalently
  \begin{equation}\label{eq:resonant_Rieszdiagrm_conditions}
    \frac{d+1}{2} - \frac{d}{p}+\frac{1}{q}<0,\quad
    \frac{3-d}{2} + \frac{d}{q}-\frac{1}{p}<0.
  \end{equation}
  These inequalities are equivalent to~\eqref{eq:resonant_admissibleexponents} so that the result is proved.
\end{proof}

\section{Proofs of Theorem \ref{thm_LAP} and Corollary~\ref{cor_NLH}} \label{sec:proofs}

  \noindent \textit{Proof of Theorem~\ref{thm_LAP}:}\;  
%
  The first step of the proof is the definition of the
  operators $\mathcal{R}^\pm(\lambda)$. In view of the results of the previous chapter it is
  reasonable to define for $f\in C_0^\infty(\R^d)$
  $$
    \mathcal{R}^\pm(\lambda)f
    := \mathcal{R}_1^0(\lambda) f + \sum_{j=0}^\infty \mathcal{R}_2^{\pm,j}(\lambda) f,
  $$
  see~\eqref{eq:Reps12_defn} and \eqref{eq:def_Rpmj}. For $p,q$ as in \eqref{eq:admissible_pq} these mappings
  satisfy an estimate of the form 
  $$
    \|\mathcal{R}^\pm(\lambda)f\|_{L^q(\R^d;\C)} \leq C \|f\|_{L^p(\R^d)}
  $$
  for a positive number $C$ independent of $f$, see Lemma~\ref{lem:mapping_properties_nonresonant} and
  Lemma~\ref{lem:resonant_mappingproperties}. Since $C_0^\infty(\R^d)$ is dense in $L^p(\R^d)$,
  $\mathcal{R}^\pm(\lambda)$ extend to bounded linear operators (denoted with the same symbol) from
  $L^p(\R^d)$ to $L^q(\R^d;\C)$. The same lemmas provide the equiboundedness of the bounded linear operators
  $\mathcal{R}^{\eps}(\lambda):L^p(\R^d)\to L^q(\R^d;\C)$ as well as 
  $$
    \|\mathcal{R}^{\eps}(\lambda)f - \mathcal{R}^\pm(\lambda)f\|_{L^q(\R^d;\C)}  
    \leq C \eps^\beta \|f\|_{L^p(\R^d)}\qquad\text{as }\eps\to 0^\pm. 
  $$
  From this we deduce $\mathcal{R}^{\eps}\to \mathcal{R}^{\pm}$ as $\eps\to 0^\pm$ in the operator norm. 
  
  \medskip 
  
  We now show that $\mathcal{R}^\pm(\lambda)$ define a resolvent-type operators for $L-\lambda$.
  For $f\in L^p(\R^d)$ we set $u^\eps:= \mathcal{R}^\eps(\lambda)f\in W^{2,p}_{loc}(\R^d;\C)\cap L^q(\R^d;\C)$
  so that $u^\eps$ is a complex-valued strong solution of $Lu-(\lambda +i\eps)u = f$ for every $\eps\in\R\sm\{0\}$. The first part
  of the proof implies $u^\eps\to u^\pm :=\mathcal{R}^\pm(\lambda)f$ as $\eps\to 0^\pm$ in $L^q(\R^d;\C)$ and hence we obtain for all test functions
  $g\in C_0^\infty(\R^d)$
  \begin{align*}
    \int_{\R^d} f g   
    = \int_{\R^d} (L-\lambda-i\eps)u^\eps g   
    = \int_{\R^d} u^\eps (L-\lambda-i\eps)g   
    \to \int_{\R^d} u^\pm  (L-\lambda)g  \qquad\text{as }\eps\to 0^\pm. 
  \end{align*}
  As a consequence, $u^\pm$ is a distributional solution of the linear elliptic PDE $(L-\lambda)u=f$ on~$\R^d$
  and therefore (see for instance Theorem~2 in~\cite{Man_note_on}) it satisfies this PDE in the strong sense
  as an element of $W^{2,p}_{loc}(\R^d;\C)\cap L^q(\R^d;\C)$. 
  
  \medskip
  
  It is left to prove that $\mathcal{R}^\pm(\lambda)f$ lies in $W^{2,p}(\R^d;\C)+W^{2,q}(\R^d;\C)$. To this
  end set $L_0\psi:=-\diver(A\nabla\psi)$ and write $u^\eps=v^\eps+w$ where 
  $$
    v^\eps:= (L_0+1)^{-1}\big((1-V-\lambda-i\eps)u^\eps\big)\in W^{2,q}(\R^d;\C),\qquad
    w:= (L_0+1)^{-1}f \in W^{2,p}(\R^d).  
  $$
  The $W^{2,p}$-estimates for $L_0+1$ from~Theorem C.1.3.(iii) in \cite{LorBer_analytical} and
  $V\in L^\infty(\R^d)$ imply
  \begin{align*}
    \|v^\eps\|_{W^{2,q}(\R^d;\C)} 
    + \|w\|_{W^{2,p}(\R^d)}
    &\leq C (\|u^\eps\|_{L^q(\R^d;\C)} +  \|f\|_{L^p(\R^d)})
    \leq C \|f\|_{L^p(\R^d)}, \\
    \|v^\eps-v^\delta\|_{W^{2,q}(\R^d;\C)}
    &\leq C\|u^\eps-u^\delta\|_{L^q(\R^d;\C)}
    \leq o(1) \|f\|_{L^p(\R^d)} 
  \end{align*}
  as $\eps,\delta \to 0^+$ or $\eps,\delta \to 0^-$. Hence, we get 
  $$
    \mathcal{R}^\pm(\lambda)f
    = \lim_{\eps\to 0^\pm} u^\eps 
    = \lim_{\eps\to 0^\pm} v^\eps + w 
    \in W^{2,q}(\R^d;\C)+W^{2,p}(\R^d).
  $$
   \qed
  
  \medskip
  
  We notice that the operators $\mathcal{R}^\pm(\lambda)$ are defined as integral operators with a kernel
  function 
  \begin{equation}\label{eq:def_Kpm}
    K^\pm(x,y)
    := K^0_1(x,y)+K^\pm_2(x,y)
    := K^0_1(x,y)+ \sum_{j=1}^\infty K^{\pm,j}_2(x,y)
  \end{equation} 
  where the integral has to be understood in the sense of an oscillatory integral, i.e. 
  $$
    (\mathcal{R}^\pm(\lambda)f)(x)
    = \int_{\R^d} K^0_1(x,y)f(y)\,dy + \sum_{j=0}^\infty \int_{\R^d} K_2^{\pm,j}(x,y)f(y)\,dy.  
  $$
  We will use $K^\pm(x,y)=\ov{K^\mp(y,x)}$ for all $x,y\in\R^d$ as well as $K^\pm(x+m,y)=K^\pm(x,y-m)$ for all
  $x,y\in\R^d$ and $m\in\Z^d$, which follows from the corresponding properties of each of the summands
  in~\eqref{eq:def_Kpm} described in the lines after~\eqref{eq:def_Keps_12}.
 
 \medskip
 
  \noindent \textit{Proof of Corollary~\ref{cor_NLH}:}\; As pointed out in the introduction, the idea for the
  proof of this result is completely due to Evequoz and Weth \cite{EvWe_dual}. We quickly review in which way our
  construction of the resolvent from Theorem~\ref{thm_LAP} makes it possible to use their methods. Following
  their notation we set for $f\in L^{q'}(\R^d)$ with $\frac{2(d+1)}{d-1}<q<\frac{2d}{d-2}$
  \begin{align} \label{eq:def_boldR}
    \begin{aligned}
    \bold{R}f
    &:= \frac{1}{2}\Real\big(\mathcal{R}^+(\lambda)f + \mathcal{R}^-(\lambda)\big)f 
    = \int_{\R^d}K^*(x,y)f(y)\,dy, \qquad\text{where } \\
    K^*(x,y) 
    &:= \frac{1}{2}\Real\big(K^+(x,y)+K^-(x,y)\big) = \frac{1}{2}\Real \big(K^+(x,y)+K^+(y,x)\big).
    \end{aligned} 
  \end{align} 
  By Theorem~\ref{thm_LAP} this formula defines a bounded linear operator
  from $L^{q'}(\R^d)$ to $L^q(\R^d)$ and $\bold{R}f$ is a real-valued strong
  solution of $Lu-\lambda u=f$ by Theorem~\ref{thm_LAP}. By construction, we moreover have
  \begin{equation}\label{eq:symmetries_Kstar}
    K^*(x,y) = K^*(y,x),\quad K^*(x+m,y)=K^*(x,y-m)\qquad\text{for all } m\in\Z^d,\,x,y\in\R^d.
  \end{equation}
  A nontrivial solution $u\in L^q(\R^d)$ of the nonlinear Helmholtz equation $Lu-\lambda u = \pm \Gamma
  |u|^{q-2}u$ may then be solved by proving the existence of a nontrivial function $v\in L^{q'}(\R^d)$ such
  that
  \begin{equation}\label{eq:dual_equation}
    \Gamma^{-1/(q-1)} |v|^{q'-2}v = \pm  \bold{R}v.
  \end{equation}
  Exploiting the first equation in \eqref{eq:symmetries_Kstar} we conclude that
  the equation~\eqref{eq:dual_equation} is variational and its Euler functional $J:L^{q'}(\R^d)\to\R$ is given
  by 
  $$
    J(v) = \frac{1}{q'}\|  \Gamma^{-1/q} v\|_{L^{q'}(\R^d)}^{q'} \mp \frac{1}{2} \int_{\R^d}  v
    \bold{R}v.
  $$
  This functional is continuously differentiable and has the mountain
  pass geometry, see Lemma~4.2 in~\cite{EvWe_dual}. The only point in the verification of
  this lemma that is not so obvious, is the existence of nontrivial functions $z_+,z_-\in L^{q'}(\R^d)$
  such that 
  \begin{equation} \label{eq:MP_verification}
    \mp \int_{\R^d}  z_\pm \bold{R}z_\pm <0.
  \end{equation}
  In order to find such a function we adapt the idea from Lemma~3.1 in~\cite{MaMoPe_oscillating}.  
  We choose 
  $$
    K_\pm 
    := \{k\in\R^d : \delta \leq \pm(\Lambda(k)-\lambda)\leq 2\delta\} 
  $$
  where $\Lambda:\R^d\to\R$ has the properties described by (A2) and $\delta>0$ is chosen so small that
  $K_\pm$ has positive measure and $K_\pm\subset U$ for $U$ as in (A2).
  This is possible due to $\nabla\Lambda\neq 0$ on $U$ and the Implicit Function Theorem. Then we
  define $z_\pm$ via
  \begin{equation}\label{eq:MP_verificationII}
    z_\pm := 1_{B_R(0)} y_\pm,\qquad
    U(y_\pm)(x,k) := \sum_{s\in\Z^d} 1_{K_\pm}(k+2\pi s)\psi_s(x,k) 
  \end{equation}
  where $R$ will be chosen sufficiently large.
  From \eqref{eq:resolvent+Floquet} we get  
  \begin{align*}
    &\;  \mp \int_{\R^d}  y_\pm \mathcal{R}^+ y_\pm \\
    &= \lim_{\eps\to 0^+} \Big[ \mp   \int_{\R^d} y_\pm
    \mathcal{R}^{\eps}(\lambda) y_\pm \Big]  \\
    &=  \lim_{\eps\to 0^+} \Big[\mp  \int_{\Omega} \int_B \overline{(U y_\pm)(x,k)}
    \cdot U(\mathcal{R}^{\eps}(\lambda)y_\pm )(x,k)\,dk \,dx \Big)  \\ 
    &=  \lim_{\eps\to 0^+} \Big[  \mp\int_{\Omega} \int_{B} \sum_{s\in\Z^d} 1_{K_\pm}(k+2\pi
    s)\overline{\psi_s(x,k)} \cdot  \sum_{t\in\Z^d}
    \frac{\skp{U(y_\pm)(\cdot,k)}{\psi_t(\cdot,k)}_{L^2(\Omega;\C)}}{\lambda_t(k)-\lambda-i\eps}
    \psi_t(x,k) \,dk \,dx \Big]  \\
    &=   \lim_{\eps\to 0^+}  \Big[\mp \sum_{s\in\Z^d} \int_{\Omega} \int_{B} 1_{K_\pm}(k+2\pi s) 
    \frac{|\psi_s(x,k)|^2}{\lambda_s(k)-\lambda-i\eps} \,dk \,dx \Big]  \\
    &\stackrel{\eqref{eq:defn_LambdaPsi}}{=}   \lim_{\eps\to 0^+}  \Big[\mp \int_{\Omega} \int_{K_\pm}  
    \frac{|\Psi(x,k)|^2}{\Lambda(k)-\lambda-i\eps} \,dk \,dx \Big]  \\
    &=  \mp \int_{\Omega} \int_{K_\pm} \frac{|\Psi(x,k)|^2}{\Lambda(k)-\lambda} \,dk \,dx 
    < 0.
  \end{align*}
  In the second last equality we used that $\{\psi_t(\cdot,k):t\in\Z^d\}$ is an orthonormal basis of
  $L^2(\Omega;\C)$. The calculations for the integral of $y_\pm \mathcal R^- y_{\pm}$ are exactly the
  same, for it suffices to replace $\eps\to 0^+$ by $\eps\to 0^-$. So, the
  definition of $\bold{R}$ from~\eqref{eq:def_boldR} implies 
  $$
    \mp \int_{\R^d} y_\pm \bold{R}y_\pm   < 0.
  $$
  Choosing now $R$ large enough (but finite) in \eqref{eq:MP_verificationII} we get~\eqref{eq:MP_verification}
  as well as $z_\pm\in L^{q'}(\R^d)$ by the explicit formula for $U^{-1}$ from
  \eqref{eq:Floqet_Bloch_transform}.
  So the Mountain Pass Theorem provides a Palais-Smale sequence for $J$ at its mountain pass level $c>0$,
  which is defined as in Section~6 of \cite{EvWe_dual}.
  This sequence is bounded and using the periodicity of $\Gamma$ as well as \eqref{eq:symmetries_Kstar}
  we get from the \q{nonvanishing property} (see Theorem~3.1 in \cite{EvWe_dual}) that, up to
  translation, the Palais-Smale sequence converges weakly to a nontrivial solution $v\in L^{q'}(\R^d)$ of
  \eqref{eq:dual_equation} which has the right energy level $c$. As in~\cite{EvWe_dual} this provides an
  $L^q(\R^d)$-solution $u$ of \eqref{eq:NLH} and it remains to discuss its global regularity.
  
  \medskip
  
  First we claim $u\in L^q(\R^d)\cap L^\infty(\R^d)$. This follows as in the first part of the proof
  of~Lemma~4.3~\cite{EvWe_dual}, where the corresponding result is proved for the Helmholtz operator
  $-\Delta-1$ instead of $L$. Notice that the method used there is based on a kind of Moser iteration, which
  remains valid for general linear elliptic second order operators such as $L$.  So we have
  $$
    (L_0+1)u 
    = f + (1-V-\lambda)u \in \big(L^{q'}(\R^d)\cap L^\infty(\R^d)\big) +  \big( L^q(\R^d)\cap  L^\infty(\R^d)\big) 
    = L^q(\R^d)\cap L^\infty(\R^d)
  $$
  and Theorem C.1.3.(iii) in \cite{LorBer_analytical} implies $u\in W^{2,r}(\R^d)$ for all $r\in [q,\infty)$. \qed

%

\section{Proof of Proposition~\ref{prop:resonant_pointwise_decay}} \label{sec:proof_prop_I}

The proof of Proposition~\ref{prop:resonant_pointwise_decay} uses the method of stationary phase (p.348ff.
\cite{Ste_harmonic_analysis}) in order to derive the pointwise bounds for $K^\eps_2(x,y)$. The crucial
observation is that in the definition of this kernel function, see~\eqref{eq:def_Keps},
the integration takes place over those regions which correspond to
a foliation by Fermi surfaces $(F_\tau)$. These hypersurfaces have positive
Gaussian curvature by (A2) so that we may prove decay estimates for integrals of the form 
$$
  \int_{\R} \frac{\chi(\lambda-\tau)}{\lambda-\tau-i\eps} \Big( \int_{F_\tau} h(k) e^{i\sigma
  \skp{v}{k}} \,d\mathcal{H}^{d-1}(k) \Big)\,d\tau
$$
by the method of stationary phase. As we will see later, such estimates yield pointwise bounds for
$K^\eps_2(x,y)$ when $\sigma=|x-y|$ and $v=\tfrac{x-y}{|x-y|}$. We recall that $\chi$ is chosen to
satisfy~\eqref{eq:Def_chi} for some $\rho>0$ that we will define later and which will only depend on the data
from (A1),(A2). The main technical difficulties come from the
fact that our estimates have to be uniform with respect to $\eps$ and that the presence of the singular
prefactor requires to estimate both $a(\lambda)$ and $a(\lambda+t)-a(\lambda)$ where  
\begin{equation} \label{eq:integral_a}
  a(\tau) = \chi(\tau-\lambda)\int_{F_\tau} h(k) e^{i\sigma \skp{v}{k}} \,d\mathcal{H}^{d-1}(k)
\end{equation}
for $\tau\in (\lambda-\rho,\lambda+\rho)$. This fact will be proved first. 

\begin{prop} \label{prop:Plemelj_formula}
  Let $\lambda\in\R,\rho\in (0,\infty]$ and assume that $a:[\lambda-\rho,\lambda+\rho]\to\R$ is measurable
  such that $|a(\lambda+t)-a(\lambda)|\leq \omega(|t|)$ where $t\mapsto
  \omega(t)/t$ is integrable over $(0,\rho)$. Then the following
  inequalities hold for $\eps>0$:
  \begin{align*}
    &\text{(i)}\qquad \Big| \int_{\lambda-\rho}^{\lambda+\rho} \frac{a(\tau)}{\tau-\lambda\mp i\eps} \,d\tau  
    - p.v.\int_{\lambda-\rho}^{\lambda+\rho} \frac{a(\tau)}{\tau-\lambda}\,d\tau 
    \mp i\pi a(\lambda)\Big| \\
    &\qquad\qquad \leq   \int_0^\rho 
    \frac{2\eps}{\sqrt{t^2+\eps^2}} \frac{\omega(t)}{t}\,dt
    + (\pi-2\arctan(\rho/\eps))|a(\lambda)|, \\
    &\text{(ii)}\qquad
    \Big| \int_{\lambda-\rho}^{\lambda+\rho} \frac{a(\tau)}{\tau-\lambda\mp i\eps} \,d\tau \Big| 
    \leq  2\pi \Big( \int_0^\rho
      \frac{\omega(t)}{t}\,dt + |a(\lambda)|\Big). 
  \end{align*}
\end{prop}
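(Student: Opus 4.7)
The plan is to substitute $t=\tau-\lambda$ so that both integrals run over $[-\rho,\rho]$ and then split $a(\lambda+t)=a(\lambda)+r(t)$, where $r(t):=a(\lambda+t)-a(\lambda)$ satisfies $|r(t)|\leq\omega(|t|)$. This separates a singular but constant-in-$a$ contribution, which can be computed exactly, from a Hölder-type remainder that can be controlled by $\omega$.

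For the constant part, a direct computation using oddness of the real part and the $\arctan$-antiderivative of $1/(t^2+\eps^2)$ gives
\[
  \int_{-\rho}^{\rho}\frac{dt}{t\mp i\eps}
  = \int_{-\rho}^{\rho}\frac{t\pm i\eps}{t^2+\eps^2}\,dt
  = \pm 2i\arctan(\rho/\eps).
\]
Rewriting $2\arctan(\rho/\eps)=\pi-(\pi-2\arctan(\rho/\eps))$ isolates the desired $\pm i\pi a(\lambda)$ term and produces precisely the error $(\pi-2\arctan(\rho/\eps))|a(\lambda)|$ appearing in (i).

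For the remainder part, the key identity is
\[
  \frac{1}{t\mp i\eps}-\frac{1}{t} = \frac{\pm i\eps}{t(t\mp i\eps)},
\]
whose modulus equals $\eps/(|t|\sqrt{t^2+\eps^2})$. Since $r(t)/t$ is integrable by the hypothesis on $\omega$, the principal value $\mathrm{p.v.}\int_{-\rho}^{\rho}a(\tau)/(\tau-\lambda)\,d\tau$ coincides with the ordinary integral $\int_{-\rho}^{\rho}r(t)/t\,dt$ (the $a(\lambda)/t$ contribution cancels by symmetry), and
\[
  \left|\int_{-\rho}^{\rho}r(t)\Big(\tfrac{1}{t\mp i\eps}-\tfrac{1}{t}\Big)\,dt\right|
  \leq \int_{-\rho}^{\rho}\frac{\eps\,\omega(|t|)}{|t|\sqrt{t^2+\eps^2}}\,dt
  = \int_0^\rho\frac{2\eps}{\sqrt{t^2+\eps^2}}\,\frac{\omega(t)}{t}\,dt,
\]
which combined with the previous step yields (i).

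Part (ii) is a cruder consequence of the same decomposition: the constant piece contributes at most $2\arctan(\rho/\eps)|a(\lambda)|\leq \pi|a(\lambda)|$, while the remainder piece obeys $|1/(t\mp i\eps)|\leq 1/|t|$ and is therefore bounded by $2\int_0^\rho\omega(t)/t\,dt$; the stated $2\pi$-factor is generous. The only real obstacle is purely organizational, namely handling both $\pm$ branches and both real and imaginary parts without sign errors; there is no substantive analytic difficulty once the splitting $a=a(\lambda)+r$ is in place.
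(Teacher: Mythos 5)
Your proof is correct and follows essentially the same route as the paper: both reduce to the identity $\tfrac{1}{t\mp i\eps}-\tfrac{1}{t}=\tfrac{\pm i\eps}{t(t\mp i\eps)}$ together with the exact $\arctan$ computation for the constant contribution $a(\lambda)$, the only cosmetic difference being that the paper keeps the splitting inside a single truncated integral over $r<|\tau|<\rho$ while you separate $a=a(\lambda)+r$ up front (and you prove (ii) directly rather than deducing it from (i), which is if anything slightly cleaner).
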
  
\begin{proof}
  Without loss of generality we assume $\lambda=0$. Then we
  have
  \begin{align*}
    & \Big| \int_{-\rho}^\rho \frac{a(\tau)}{\tau\mp i\eps} \,d\tau  - p.v.\int_{-\rho}^\rho \frac{a(\tau)}{\tau}\,d\tau
    \mp i\pi a(0)\Big| \\
    &= \Big| \lim_{r\to 0^+} \int_{r<|\tau|<\rho} \Big( \frac{a(\tau)}{\tau\mp i\eps} - \frac{a(\tau)}{\tau} 
    \mp \frac{i\eps (\tau\pm i\eps) a(0)}{\tau(\tau^2+\eps^2)} \Big) \,d\tau 
    \mp i(\pi-2\arctan(\rho/\eps))a(0)  \Big| \\
    &\leq  \liminf_{r\to 0^+} \int_{r<|\tau|<\rho} \Big| \frac{a(\tau)}{\tau\mp i\eps} - \frac{a(\tau)}{\tau} 
    \mp \frac{i\eps (\tau\pm i\eps) a(0)}{\tau(\tau^2+\eps^2)} \Big| \,d\tau 
    + (\pi-2\arctan(\rho/\eps))|a(0)| \\
    &=  \liminf_{r\to 0^+} \int_{r<|\tau|<\rho}   \frac{\eps|a(\tau)-a(0)|}{|\tau|\sqrt{\tau^2+\eps^2}}  \,d\tau
    + (\pi-2\arctan(\rho/\eps))|a(0)|  \\
    &\leq  \int_0^\rho \frac{2\eps}{\sqrt{t^2+\eps^2}} \frac{\omega(t)}{t} \,dt +
    (\pi-2\arctan(\rho/\eps))|a(0)|.
  \end{align*}  
  This proves (i) and (ii) follows from  
  \begin{align*}
    \Big| \int_{-\rho}^\rho\frac{a(\tau)}{\tau\mp i\eps} \,d\tau  \Big|
    &\leq \Big|p.v.\int_{-\rho}^\rho \frac{a(\tau)}{\tau}\,d\tau -i\pi a(0)\Big| 
      +  \int_0^\rho \frac{2\eps}{\sqrt{t^2+\eps^2}} \frac{\omega(t)}{t} \,dt 
      +  (\pi-2\arctan(\rho/\eps))|a(0)| \\
    &\leq \int_{-\rho}^\rho \frac{|a(\tau)-a(0)|}{|\tau|}\,d\tau + \pi|a(0)| +  2\int_0^\rho
    \frac{\omega(t)}{t} \,dt 
     + (\pi-2\arctan(\rho/\eps))|a(0)|\\ 
    &\leq 2\pi \Big(\int_0^\rho \frac{\omega(t)}{t} \,dt + |a(0)|\Big).
  \end{align*}
\end{proof}

Variants of the above result are usually attributed to Plemelj and Sokhotski.  In order to derive estimates for $a$ 
as in~\eqref{eq:integral_a} we will perform a change of coordinates
in order to reduce the estimates over the Fermi surfaces $F_\tau$ to estimates over open subsets of
$\R^{d-1}$ where $\tau-\lambda\in I:= [-\rho,\rho]$ where $\rho>0$ is chosen later.
The estimates over those pieces of the $F_\tau$ where the phase function $k\mapsto \skp{v}{k}$ is
nonstationary will be estimated with the aid of the following result.

\begin{prop}\label{prop:nonstationary_phase}
  Let $K\subset\R^{d-1}$ be a compact set, $\alpha,\beta\in (0,1)$  and let $\Phi\in
  C^{0,\beta}(I;W^{N,\infty}(K))$ satisfy $|\nabla\Phi_t|\geq c>0$ on $K$ for all $t\in I$. Then there
  is a $C>0$ such that for $|\sigma|\geq 1$ and  $f\in C^{0,\beta}(I;W^{N-1,1}(\R^{d-1}))$ with
  $\supp(f_t)\subset K$ we have
  \begin{align*}
    \Big| \int_{\R^{d-1}} f_t(x) e^{i\sigma \Phi_t(x)} \,dx \Big| 
    &\leq C|\sigma|^{1-N} 
    \|f_t\|_{W^{N-1,1}(\R^{d-1})}, \\
    \Big| \int_{\R^{d-1}} f_t(x) e^{i\sigma \Phi_t(x)} \,dx - \int_{\R^{d-1}} f_0(x) e^{i\sigma \Phi_0(x)}
    \,dx \Big|  
    &\leq C|t|^\beta|\sigma|^{\alpha+1-N}\|f\|_{C^{0,\beta}(I;W^{N-1,1}(\R^{d-1}))}.
  \end{align*}
\end{prop}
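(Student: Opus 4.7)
The plan is to apply the classical method of non-stationary phase (iterated integration by parts) and then combine it with Hölder continuity in $t$ to handle the second estimate.

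For the first estimate, I would introduce the first-order differential operator
\begin{equation*}
L_t g := \frac{\nabla \Phi_t \cdot \nabla g}{i\sigma\,|\nabla \Phi_t|^2},
\end{equation*}
which is well-defined on $K$ thanks to $|\nabla\Phi_t|\ge c>0$ and satisfies $L_t(e^{i\sigma\Phi_t})=e^{i\sigma\Phi_t}$. Its formal adjoint is $L_t^* h = -(i\sigma)^{-1}\diver\bigl((\nabla\Phi_t/|\nabla\Phi_t|^2)\,h\bigr)$. Since $\supp f_t\subset K$, no boundary terms appear and iterating integration by parts $N-1$ times yields
\begin{equation*}
\int_{\R^{d-1}} f_t\, e^{i\sigma\Phi_t}\,dx \;=\; \int_{\R^{d-1}} (L_t^*)^{N-1} f_t \cdot e^{i\sigma\Phi_t}\,dx.
\end{equation*}
A Leibniz-type expansion, using $\Phi_t\in W^{N,\infty}(K)$ together with the smoothness of $v\mapsto v/|v|^2$ on $\{|v|\ge c\}$, shows $|(L_t^*)^{N-1}f_t|\le C|\sigma|^{1-N}\sum_{|\gamma|\le N-1}|\partial^\gamma f_t|$ pointwise; integrating yields the first claim, with $C$ depending only on $c$, $d$ and $\|\Phi\|_{L^\infty(I;W^{N,\infty}(K))}$.

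For the second estimate I would split
\begin{equation*}
\int f_t e^{i\sigma\Phi_t}dx - \int f_0 e^{i\sigma\Phi_0}dx \;=\; \int (f_t-f_0)\,e^{i\sigma\Phi_t}dx \;+\; \int f_0\bigl(e^{i\sigma\Phi_t}-e^{i\sigma\Phi_0}\bigr)dx.
\end{equation*}
The first summand is handled by the first estimate applied to the amplitude $f_t-f_0$ together with the Hölder bound $\|f_t-f_0\|_{W^{N-1,1}}\le |t|^\beta \|f\|_{C^{0,\beta}(I;W^{N-1,1})}$. For the second summand, the fundamental theorem of calculus gives the identity
\begin{equation*}
e^{i\sigma\Phi_t}-e^{i\sigma\Phi_0} \;=\; i\sigma\int_0^1 (\Phi_t-\Phi_0)\, e^{i\sigma\Phi_s^*}\,ds, \qquad \Phi_s^*:=(1-s)\Phi_0+s\Phi_t.
\end{equation*}
Shrinking the interval $I$ so that $\|\nabla\Phi_t-\nabla\Phi_0\|_{L^\infty(K)}<c/2$ (possible by the $C^{0,\beta}(I;W^{N,\infty})$-continuity of $\Phi$) ensures $|\nabla\Phi_s^*|\ge c/2$ uniformly in $s,t$, so the first estimate applies to each inner integral with phase $\Phi_s^*$ and amplitude $f_0(\Phi_t-\Phi_0)$. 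Using the Leibniz rule and $\|\Phi_t-\Phi_0\|_{W^{N-1,\infty}}\le |t|^\beta\|\Phi\|_{C^{0,\beta}(I;W^{N,\infty})}$, each inner integral is bounded by $C|\sigma|^{1-N}|t|^\beta\|f\|\|\Phi\|$, and the prefactor $|\sigma|$ then produces $C|\sigma|^{2-N}|t|^\beta\|f\|$ for the second summand.

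Together this establishes the endpoint bound $|T_t f_t - T_0 f_0|\le C|\sigma|^{2-N}|t|^\beta\|f\|$. Interpolating against the trivial bound $|T_t f_t - T_0 f_0|\le C|\sigma|^{1-N}\|f\|$ (obtained by applying the first estimate to both terms separately) yields, for a suitable interpolation parameter, the claimed inequality for arbitrary $\alpha\in(0,1)$. The main technical obstacle I anticipate is the uniform-in-$t$ bookkeeping of the Leibniz expansion of $(L_t^*)^{N-1}f_t$: the key point making it tractable is that the map $v\mapsto v/|v|^2$ is smooth on $\{|v|\ge c\}$, so every derivative of $\nabla\Phi_t/|\nabla\Phi_t|^2$ of order $\le N-1$ is uniformly bounded in $L^\infty(K)$ and Hölder continuous in $t$, with modulus inherited from the assumption on $\Phi$.
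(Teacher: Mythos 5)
Your argument is correct in substance and, for the first inequality, essentially identical to the paper's: the paper reduces to a fixed directional derivative $\skp{\nabla\Phi_t}{\xi}\ge c$ via a partition of unity before forming $D_t$ and iterating the adjoint $N-1$ times, whereas you use the vector field $\nabla\Phi_t/|\nabla\Phi_t|^2$ directly; both versions of the Leibniz bookkeeping give $|(D_t^*)^{N-1}f_t|\le C|\sigma|^{1-N}\sum_{|\gamma|\le N-1}|\partial^\gamma f_t|$. For the second inequality you take a genuinely different route. The paper integrates by parts in both terms first and then writes the difference as
\[
\int e^{i\sigma\Phi_0}\big((D_t^*)^{N-1}f_t-(D_0^*)^{N-1}f_0\big)\,dx
+\int\big(e^{i\sigma(\Phi_0-\Phi_t)}-1\big)(D_t^*)^{N-1}f_t\,e^{i\sigma\Phi_t}\,dx,
\]
bounding the first integral through the H\"older continuity of $t\mapsto(f_t,\Phi_t)$ and the second through $|e^{is}-1|\le C|s|^\alpha$; no auxiliary phases appear and no smallness of $I$ is used. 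You instead peel off $f_t-f_0$ before integrating by parts and linearize the phase difference by the fundamental theorem of calculus, which introduces the interpolated phases $\Phi_s^*$ and therefore requires $|\nabla\Phi_s^*|\ge c/2$. Shrinking $I$ to obtain this is not literally admissible (the statement is for the given $I$), but the gap is cosmetic: for $|t|\ge t_0$ the claimed bound already follows from the first inequality applied to each term separately, since $|\sigma|^{1-N}\le|\sigma|^{\alpha+1-N}$ for $|\sigma|\ge 1$, so only small $|t|$ needs the FTC argument. Your closing interpolation yields $|t|^{\alpha\beta}$ rather than the stated $|t|^\beta$; note, however, that the paper's own second term has exactly the same feature (its use of $|e^{is}-1|\le C|s|^\alpha$ also produces $|t|^{\alpha\beta}$), and only the integrability of $\omega(t)/t$ near $0$ is exploited in Proposition~\ref{prop:Plemelj_formula}, so this costs nothing downstream. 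What your decomposition buys is that it avoids comparing the two iterated adjoints $(D_t^*)^{N-1}f_t$ and $(D_0^*)^{N-1}f_0$ --- the most tedious Leibniz estimate in the paper's proof --- at the price of the extra lower bound on the interpolated gradients and one additional interpolation at the end.
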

\begin{proof}
  Without loss of generality we assume $\skp{\nabla\Phi_t(x)}{\xi}\geq c>0$ on $K$ for some
  unit vector $\xi\in S^{d-1}$ and all $t\in I$, otherwise consider a partition of unity of a
  suitable covering of $K\times I$ where the corresponding inequalities hold for unit vectors
  $\xi^1,\ldots,\xi^M$ for some $M\in\N$. We define the linear differential operators $D_t$ and the formal
  adjoints $D_t^*$ via 
  $$
    (D_t\psi)(x) := \frac{1}{i\sigma} \frac{\skp{\nabla \psi(x)}{\xi}}{
     \skp{\nabla\Phi_t(x)}{\xi}},\qquad
    (D_t^*\psi)(x) = \frac{i}{\sigma} \Bigskp{\nabla\Big( \frac{\psi(\cdot)}{
    \skp{\nabla\Phi_t(\cdot)}{\xi}}\Big)(x)}{\xi}.
  $$
  This definition is motivated by   $D_t(e^{i\sigma \Phi_t})=e^{i\sigma\Phi_t}$. By induction one 
  proves
  $$
    ((D_t^*)^{N-1}\psi)(x) 
    = \Big(\frac{i}{\sigma}\Big)^{N-1} \frac{P_N(\psi(x),\ldots,\nabla^{N-1}
    \psi(x),\nabla\Phi_t(x),\ldots,\nabla^N\Phi_t(x))}{\skp{\nabla\Phi_t(x)}{\xi}^N} 
  $$ 
  and $P_N$ is a polynomial of degree $N$ that is 1-homogeneous with respect to the $\psi$-components,
  because $(D_t^*)^{N-1}$ is linear, and $N-1$-homogeneous with respect to the $\Phi_t$-components. 
  Therefore, integrating by parts $N-1$ times gives
  \begin{align} \label{eq:prop_nonstationary_phase_I}
    \begin{aligned}
    \Big|\int_{\R^{d-1}} f_t(x) e^{i\sigma \Phi_t(x)} \,dx\Big|
    &= \Big|\int_{\R^{d-1}} f_t(x) D_t^{N-1}(e^{i\sigma \Phi_t(\cdot)})(x) \dx  \Big|\\
    &= \Big|\int_{\R^{d-1}} ((D_t^*)^{N-1} f_t)(x) e^{i\sigma \Phi_t(x)} \dx \Big|\\
    &\leq C
    |\sigma|^{1-N}   
    \int_{\R^{d-1}} \big(|\nabla\Phi_t|+\ldots+|\nabla^N\Phi_t|\big)^{N-1} \big(|f_t|+\ldots+|\nabla^{N-1}
    f_t|\big)  \\
    &\leq C
    |\sigma|^{1-N}
    \|f_t\|_{W^{N-1,1}(\R^{d-1})} 
  \end{aligned}
  \end{align}
  and the first inequality is proved. The proof of the second inequality is similar. Proceeding as above we get
  \begin{align*}
    &\Big|\int_{\R^{d-1}} f_t(x) e^{i\sigma \Phi_t(x)} \,dx
    -  \int_{\R^{d-1}} f_0(x) e^{i\sigma \Phi_0(x)} \,dx\Big| \\
    &=  \Big| \int_{\R^{d-1}} e^{i\sigma\Phi_0(x)} \big(
      ((D_t^*)^{N-1} f_t)(x) - ((D_0^*)^{N-1} f_0)(x)  \big)\dx \Big|\\
    &+ \Big|\int_{\R^{d-1}} \big( e^{i\sigma(\Phi_0(x)-\Phi_t(x))}-1\big)((D_t^*)^{N-1}
    f_t)(x)e^{i\sigma\Phi_t(x)}\,dx  \Big|.
  \end{align*}
  The first integral is estimated as follows:
  \begin{align*}
    &\Big| \int_{\R^d} e^{i\sigma\Phi_0(x)} \big(
      ((D_t^*)^{N-1} f_t)(x) - ((D_0^*)^{N-1} f_0)(x)  \big)\dx \Big| \\
    &\leq  C|\sigma|^{1-N} \int_{\R^{d-1}} \big(|\nabla \Phi_t|+|\nabla \Phi_0|+|\nabla^N \Phi_t|+|\nabla^N
    \Phi_0|\big)^{N-1}\big(|f_t-f_0|+\ldots+|\nabla^{N-1} (f_t-f_0)|\big) \\
    &\; +  C|\sigma|^{1-N} \int_{\R^{d-1}} \big(|\nabla \Phi_t|+|\nabla \Phi_0|+|\nabla^N \Phi_t|+|\nabla^N
    \Phi_0|\big)^{N-2} \cdot \\
    &\qquad\qquad\qquad  \big(|\nabla (\Phi_t-\Phi_0)| + \ldots + |\nabla^N (\Phi_t- \Phi_0)|\big)
    \big(|f_t|+|f_0|+\ldots+|\nabla^{N-1} f_t|+|\nabla^{N-1}f_0|\big) \\
    &\leq C|t|^\beta|\sigma|^{1-N}\|f\|_{C^{0,\beta}(I;W^{N-1,1}(\R^{d-1}))}.
  \end{align*}
  The estimate for the second integral follows from the estimate ~\eqref{eq:prop_nonstationary_phase_I} and
  the global $\beta$-H\"older-continuity of sine and cosine.
\end{proof}

 While the above proposition will be used for the estimates of integrals over those regions where the phase
 is nonstationary, the following propositions deal with the resonant parts of the Fermi surfaces. 
 To this end we use the Fourier transform 
 $$
   \mathcal{F}f(\xi) := \hat f(\xi) := (2\pi)^{\frac{1-d}{2}}  \int_{\R^{d-1}}
   f(x)e^{-i\skp{x}{\xi}}\,dx, 
  $$
  which, as usual, is defined for all Schwartz functions in $\mathcal{S}(\R^{d-1})$ and, as an isometry on
  $L^2(\R^{d-1})$, is as well defined for all tempered distributions in $\mathcal{S}'(\R^{d-1})$.
  The dual pairing will be denoted by the symbol $\skp{\cdot}{\cdot}_{\mathcal{S}'(\R^{d-1})}$. First we calculate the Fourier transform of the
  tempered distribution given by the function $x\mapsto e^{i\sigma \skp{x}{Ax}}$. Since we did not find a
  reference for these computations, we present the proof of this well-known result.

\begin{prop} \label{prop:FT_Fresnelphase}
  Let $\sigma>0$ and $A\in\R^{(d-1)\times (d-1)}$ symmetric and invertible. Then we have 
  $$
    \mathcal{F}(e^{i\sigma \skp{x}{Ax}})(\xi)
    = (2\sigma)^{\frac{1-d}{2}} |\det(A)|^{-\frac{1}{2}} e^{i\frac{\pi}{4}\sgn(A)}
     e^{-i\frac{\skp{\xi}{A^{-1}\xi}}{4\sigma}} 
  $$
  where $\sgn(A)$ denotes the signature of $A$, i.e. 
  the number of its positive eigenvalues minus the number of its negative eigenvalues.
\end{prop}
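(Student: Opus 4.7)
My plan is to reduce this to a product of one-dimensional Fresnel integrals via orthogonal diagonalization, and then handle the one-dimensional case by a regularization argument.

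First I would diagonalize. Since $A$ is real symmetric, write $A = Q^T D Q$ with $Q$ orthogonal and $D = \diag(\mu_1,\ldots,\mu_{d-1})$ whose eigenvalues $\mu_j$ are all nonzero by invertibility of $A$. Using that $\mathcal{F}$ commutes with orthogonal transformations (i.e. if $g(x)=f(Qx)$ then $\hat g(\xi)=\hat f(Q\xi)$, since $|\det Q|=1$ and $\langle Qx,\xi\rangle=\langle x,Q^T\xi\rangle$), and that $\langle x,Ax\rangle = \langle Qx,D(Qx)\rangle$, the change of variables $y=Qx$ reduces the computation to the diagonal case where the exponential factors as $\prod_{j=1}^{d-1} e^{i\sigma \mu_j y_j^2}$. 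Crucially, under this reduction, $\det A = \prod \mu_j$ and $\sgn(A)=\sum \sgn(\mu_j)$, and also $\langle\xi,A^{-1}\xi\rangle$ becomes $\sum \mu_j^{-1}\eta_j^2$ with $\eta = Q\xi$, so all the quantities in the claimed formula respect the factorization.

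Next I would treat the one-dimensional Fresnel integral, that is, compute in the sense of tempered distributions
\begin{equation*}
  \mathcal{F}(e^{i\sigma\mu x^2})(\eta) = (2\sigma|\mu|)^{-1/2} e^{i(\pi/4)\sgn(\mu)} e^{-i\eta^2/(4\sigma\mu)}
\end{equation*}
for $\mu\neq 0$ and $\sigma>0$. The plan is to regularize: for $\delta>0$ consider $g_\delta(x)=e^{(i\sigma\mu-\delta)x^2}$, which is a Schwartz function whose Fourier transform is the Gaussian
\begin{equation*}
  \widehat{g_\delta}(\eta) = (2\sigma\mu-2i\delta)^{-1/2} e^{-\eta^2/(4(\delta-i\sigma\mu))},
\end{equation*}
obtained by completing the square and invoking the standard Gaussian Fourier transform, with the principal branch of the square root. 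Sending $\delta\to 0^+$ along the correct side of the branch cut, $(2\sigma\mu-2i\delta)^{-1/2}\to (2\sigma|\mu|)^{-1/2}e^{i(\pi/4)\sgn(\mu)}$, and $\widehat{g_\delta}\to e^{-i\eta^2/(4\sigma\mu)}$ times that prefactor in $\mathcal{S}'(\R)$. Since $g_\delta \to e^{i\sigma\mu x^2}$ in $\mathcal{S}'(\R)$ and the Fourier transform is continuous on $\mathcal{S}'(\R)$, this identifies the limit and delivers the 1D formula.

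Finally I would reassemble. Taking the product of $(d-1)$ one-dimensional identities in the diagonal variables and using $(2\sigma)^{(1-d)/2}\prod |\mu_j|^{-1/2} = (2\sigma)^{(1-d)/2}|\det A|^{-1/2}$ together with $\sum\sgn(\mu_j)=\sgn(A)$ and the reconstruction $\sum \mu_j^{-1}\eta_j^2 = \langle \xi,A^{-1}\xi\rangle$, I recover the claimed formula. Rotating back via $Q$ does not alter the scalar prefactors and leaves the quadratic form $\langle\xi,A^{-1}\xi\rangle$ unchanged. There is no genuine obstacle here; the only careful point is the choice of branch of the complex square root in the regularization step, which must be tracked consistently so that the signature $\sgn(A)$ appears with the correct sign in the phase $e^{i(\pi/4)\sgn(A)}$.
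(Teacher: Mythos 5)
Your argument is correct and reaches the same formula, but it justifies the oscillatory integral differently from the paper. Both proofs share the reduction: orthogonally diagonalize $A$, factor the quadratic phase into one-dimensional pieces, and reassemble $|\det A|^{-1/2}$, $\sgn(A)$ and $\skp{\xi}{A^{-1}\xi}$ from the eigenvalue data. The difference lies in how the divergent integral is made sense of. The paper works directly with the distributional pairing against a Schwartz function $h$, applies Fubini, exhausts $\R^{d-1}$ by compact sets $K_R$, completes the square, and invokes the classical improper Fresnel limit $\int_{M_1}^{M_2}e^{\pm iz^2}\,dz\to\sqrt{\pi}\,e^{\pm i\pi/4}$. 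You instead damp the phase by $e^{-\delta x^2}$, use the Gaussian transform with complex covariance $a=\delta-i\sigma\mu$ (principal branch, $\Real a>0$), and pass to the limit $\delta\to 0^+$ using continuity of $\mathcal{F}$ on $\mathcal{S}'(\R)$; dominated convergence on both sides (the regularized Gaussians are bounded by $1$, their transforms by $(2\sigma|\mu|)^{-1/2}$) makes this airtight and avoids Fubini and the truncation bookkeeping altogether. Your route is arguably cleaner; the paper's is more self-contained in that it only uses the real one-dimensional Fresnel limit rather than the analytic continuation of the Gaussian transform. One small slip: with your exponent $e^{-\eta^2/(4(\delta-i\sigma\mu))}$ the prefactor should read $(2\delta-2i\sigma\mu)^{-1/2}$ rather than $(2\sigma\mu-2i\delta)^{-1/2}$; the limit you then state, $(2\sigma|\mu|)^{-1/2}e^{i\frac{\pi}{4}\sgn(\mu)}$, is the correct limit of the corrected expression, so the final formula is unaffected.
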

\begin{proof}
  Let $(K_R)$ be a sequence of compact sets with $K_R \nearrow \R^{d-1}$ as $R\to\infty$. Then we have for
  all $h\in \mathcal{S}(\R^{d-1})$ by Fubini's Theorem
  \begin{align*}
    \skp{\mathcal{F}(e^{i\sigma \skp{x}{Ax}})}{h}_{\mathcal{S}'(\R^{d-1})}
    &= \skp{e^{i\sigma \skp{x}{Ax}}}{ \mathcal{F}^{-1}h}_{\mathcal{S}'(\R^{d-1})} \\
    &= \int_{\R^{d-1}} e^{i\sigma \skp{x}{Ax}} \ov{(\mathcal{F}^{-1}h)(x)}\dx \\
    &= \lim_{R\to\infty} \int_{K_R} e^{i\sigma \skp{x}{Ax}} \ov{(\mathcal{F}^{-1}h)(x)}\dx  \\
    &= (2\pi)^{\frac{1-d}{2}} \lim_{R\to\infty}\int_{\R^{d-1}}  h(\xi) \Big(\int_{K_R} e^{i (\sigma
    \skp{x}{Ax}-\skp{x}{\xi})} \dx\Big) \,d\xi.
  \end{align*}
  We show that the integral over $K_R$ converges as $R\to\infty$. To this
  end we write $A=Q^TDQ$ for an orthogonal matrix $Q$ and a diagonal matrix
  $D=\diag(\mu_1,\ldots,\mu_m,-\mu_{m+1},\ldots,-\mu_{d-1})$ containing the eigenvalues of $A$ where
  $m\in\{1,\ldots,d-1\}$ and all $\mu_j$ are positive. Then we have
  $$
    \sgn(A) = m-(d-1-m) = 2m+1-d
  $$
  and the matrix $S:=Q^T\diag(|\mu_1|^{-1/2},\ldots,|\mu_{d-1}|^{-1/2}) \sigma^{-1/2}$ satisfies
  $$
    \det(S) = \sigma^{\frac{1-d}{2}} |\det(A)|^{-\frac{1}{2}},\qquad
    \sigma\skp{Sx}{ASx} = \sum_{j=1}^{d-1} \frac{\mu_j}{|\mu_j|}x_j^2 = \sum_{j=1}^m x_j^2-\sum_{j=m+1}^{d-1}
    x_j^2  =: |x'|^2-|x''|^2.
  $$
  From this we obtain by a change of coordinates
  \begin{align*}
     &\int_{K_R} e^{i (\sigma \skp{x}{Ax}-\skp{x}{\xi})} \dx \\
     &= \int_{S^{-1}K_R} |\det(S)| e^{i (\sigma \skp{Sx}{ASx}-\skp{Sx}{\xi})} \dx \\
     &=  \sigma^{\frac{1-d}{2}} |\det(A)|^{-\frac{1}{2}}
         \int_{S^{-1}K_R}   e^{i (|x'|^2-|x''|^2 - \skp{x}{S^T\xi})}   \dx \\
     &=  \sigma^{\frac{1-d}{2}} |\det(A)|^{-\frac{1}{2}} \int_{S^{-1}K_R}   e^{i
       (|x'-\frac{1}{2}(S^T\xi)'|^2-|x''+\frac{1}{2}(S^T\xi)''|^2 - \frac{|(S^T\xi)'|^2-|(S^T\xi)''|^2}{4})}
       \dx   \\
     &= \sigma^{\frac{1-d}{2}}|\det(A)|^{-1/2}  e^{-i\frac{|(S^T\xi)'|^2-|(S^T\xi)''|^2}{4}}
      \int_{K_R'} e^{i (|y'|^2-|y''|^2)} \dy   \\
     &= \sigma^{\frac{1-d}{2}}|\det(A)|^{-\frac{1}{2}} e^{-i\frac{\skp{\xi}{A^{-1}\xi}}{4\sigma}}
      \int_{K_R'} e^{i (|y'|^2-|y''|^2)} \dy,
  \end{align*}
  where  the compact set $K_R'$ is defined by $K_R':=S^{-1}K_R + \frac{1}{2}((S^T\xi)',-(S^T\xi)'')^T$.
  From
  $$
    \int_{M_1}^{M_2} e^{\pm iz^2} \,dz \to \sqrt{\pi} e^{\pm i \frac{\pi}{4}} \quad\text{as
    }M_1\to-\infty,M_2\to\infty
  $$
  we deduce
  \begin{align*}
    \lim_{R\to\infty} \int_{K_R} e^{i (\sigma \skp{x}{Ax}-\skp{x}{\xi})} \dx
     &=  \sigma^{\frac{1-d}{2}}|\det(A)|^{-\frac{1}{2}} e^{-i\frac{\skp{\xi}{A^{-1}\xi}}{4\sigma}}
     \cdot \Big( \sqrt{\pi} e^{i\frac{\pi}{4}} \Big)^m \Big( \sqrt{\pi}
     e^{-i\frac{\pi}{4}}\Big)^{d-m-1} \\
     &= \sigma^{\frac{1-d}{2}}|\det(A)|^{-\frac{1}{2}} e^{-i\frac{\skp{\xi}{A^{-1}\xi}}{4\sigma}}
     \cdot \Big(\pi\Big)^{\frac{d-1}{2}}    e^{i\frac{\pi}{4}(2m+1-d)}\\
     &= \Big(\frac{\pi}{\sigma}\Big)^{\frac{d-1}{2}} |\det(A)|^{-\frac{1}{2}} e^{i\frac{\pi}{4}\sgn(A)}
     e^{-i\frac{\skp{\xi}{A^{-1}\xi}}{4\sigma}}.
  \end{align*}
   Hence,
  \begin{align*}
    \skp{\mathcal{F}(e^{i\sigma \skp{x}{Ax}})}{h}_{\mathcal{S}'(\R^{d-1})}
    &= (2\pi)^{\frac{1-d}{2}}   \int_{\R^{d-1}}
    \left(\Big(\frac{\pi}{\sigma}\Big)^{\frac{d-1}{2}} |\det(A)|^{-\frac{1}{2}} e^{i\frac{\pi}{4}\sgn(A)}
     e^{-i\frac{\skp{\xi}{A^{-1}\xi}}{4\sigma}} \right) h(\xi) \,d\xi \\
    &=  \Bigskp{  (2\sigma)^{\frac{1-d}{2}} |\det(A)|^{-\frac{1}{2}} e^{i\frac{\pi}{4}\sgn(A)}
     e^{-i\frac{\skp{\xi}{A^{-1}\xi}}{4\sigma}} }{h}_{\mathcal{S}'(\R^{d-1})},
  \end{align*}
  which is all we had to show.
\end{proof}

Two further technical estimates are needed.

\begin{prop} \label{prop:estimate}
  Let $A\in\R^{(d-1)\times(d-1)}$ be symmetric and invertible. Then, for all $s>\frac{d-1}{2}$ and
  $\alpha,\beta\in (0,1)$ there is a $C>0$ such that for all $f\in C^{0,\beta}(I;H^{s+2\alpha}(\R^{d-1}))$ and
  $|\sigma|\geq 1$ we have
   \begin{align*}
    \Big|  \int_{\R^{d-1}} (e^{-i\frac{\skp{\xi}{A^{-1}\xi}}{4\sigma}}-1) \hat f_t(\xi) 
    \,d\xi  \Big|
    &\leq C |\sigma|^{-\alpha} \|f_t\|_{H^{s+2\alpha}(\R^{d-1})}, \\
    \Big|  \int_{\R^{d-1}} (e^{-i\frac{\skp{\xi}{A^{-1}\xi}}{4\sigma}}-1) (\hat f_t(\xi)-\hat f_0(\xi)) 
    \,d\xi  \Big|    
    &\leq C|t|^\beta|\sigma|^{-\alpha} \|f\|_{C^{0,\beta}(I;H^{s+2\alpha}(\R^{d-1}))}.
  \end{align*}
\end{prop}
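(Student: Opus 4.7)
The plan is to exploit the elementary pointwise bound $|e^{iz}-1| \leq 2^{1-\alpha}|z|^\alpha$, which is the interpolation between $|e^{iz}-1|\leq 2$ and $|e^{iz}-1|\leq |z|$. Applied to our oscillatory factor this gives
$$
  \Big|e^{-i\frac{\skp{\xi}{A^{-1}\xi}}{4\sigma}}-1\Big|
  \leq C \left(\frac{|\xi|^2}{|\sigma|}\right)^\alpha
  = C|\sigma|^{-\alpha}|\xi|^{2\alpha}
$$
for some $C=C(\alpha,A)$. So, for the first inequality, it suffices to bound $\int_{\R^{d-1}} |\xi|^{2\alpha}|\hat f_t(\xi)|\,d\xi$ by a constant times $\|f_t\|_{H^{s+2\alpha}(\R^{d-1})}$.

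This is a standard Cauchy--Schwarz estimate combined with the weight trick: writing
$$
  |\xi|^{2\alpha}|\hat f_t(\xi)|
  = (1+|\xi|^2)^{-s/2}\cdot (1+|\xi|^2)^{s/2}|\xi|^{2\alpha}|\hat f_t(\xi)|
$$
and applying Cauchy--Schwarz yields
$$
  \int_{\R^{d-1}} |\xi|^{2\alpha}|\hat f_t(\xi)|\,d\xi
  \leq \Big(\int_{\R^{d-1}} (1+|\xi|^2)^{-s}\,d\xi\Big)^{1/2}
   \Big(\int_{\R^{d-1}} (1+|\xi|^2)^{s}|\xi|^{4\alpha}|\hat f_t(\xi)|^2\,d\xi\Big)^{1/2}.
$$
The first factor is finite because $s>\tfrac{d-1}{2}$, and since $|\xi|^{4\alpha}\leq (1+|\xi|^2)^{2\alpha}$ the second factor is controlled by $\|f_t\|_{H^{s+2\alpha}(\R^{d-1})}$. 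Combining these estimates with the pointwise bound proves the first claim.

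The second inequality follows by an identical argument applied to $f_t-f_0$ in place of $f_t$: the same chain of estimates gives
$$
  \Big|\int_{\R^{d-1}}(e^{-i\frac{\skp{\xi}{A^{-1}\xi}}{4\sigma}}-1)(\hat f_t-\hat f_0)(\xi)\,d\xi\Big|
  \leq C|\sigma|^{-\alpha}\|f_t-f_0\|_{H^{s+2\alpha}(\R^{d-1})},
$$
and the H\"older continuity assumption $f\in C^{0,\beta}(I;H^{s+2\alpha}(\R^{d-1}))$ bounds $\|f_t-f_0\|_{H^{s+2\alpha}}\leq |t|^\beta \|f\|_{C^{0,\beta}(I;H^{s+2\alpha}(\R^{d-1}))}$, which finishes the proof. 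No step should present a real obstacle; the only point worth keeping track of is the condition $s>\tfrac{d-1}{2}$, which is precisely what guarantees the Bessel-type weight $(1+|\xi|^2)^{-s}$ is integrable over $\R^{d-1}$ and thereby enforces the shift from $s$ to $s+2\alpha$ in the required Sobolev norm.
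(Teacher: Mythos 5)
Your proof is correct and follows essentially the same route as the paper: the interpolated pointwise bound $|e^{iz}-1|\leq C|z|^\alpha$, Cauchy--Schwarz against the integrable weight $(1+|\xi|^2)^{-s}$ (which is where $s>\tfrac{d-1}{2}$ enters), and deduction of the second estimate from the first via the H\"older continuity of $t\mapsto f_t$. Nothing to add.
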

\begin{proof}
  From $|e^{it}-1|\leq C|t|^\alpha$ we get for $|\sigma|\geq 1$
  \begin{align*}
    \Big|  \int_{\R^{d-1}} (e^{-i\frac{\skp{\xi}{A^{-1}\xi}}{4\sigma}}-1) \hat f_t(\xi)
    \,d\xi   \Big| 
    &\leq    \int_{\R^{d-1}} \big|e^{-i\frac{\skp{\xi}{A^{-1}\xi}}{4\sigma}}-1\big| |\hat f_t(\xi)|
    \,d\xi \\
    &\leq    \int_{\R^{d-1}} C|\sigma|^{-\alpha} |\xi|^{2\alpha} |\hat f_t(\xi)| \,d\xi \\
    &\leq C|\sigma|^{-\alpha}  
    \Big(\int_{\R^{d-1}} (1+|\xi|^2)^{-s}\, \,d\xi  \Big)^{1/2}
    \Big(\int_{\R^{d-1}} (1+|\xi|^2)^{s+2\alpha} |\hat f_t(\xi)|^2 \,d\xi\Big)^{1/2}  \\
    &\leq C |\sigma|^{-\alpha} \|f_t\|_{H^{s+2\alpha}(\R^{d-1})}. 
  \end{align*}
  In the last inequality the assumption $2s> d-1$ was used. The second estimate is a direct
  consequence of the first because of $\|f_t-f\|_{H^{s+2\alpha}(\R^{d-1})} \leq C|t|^\beta
  \|f\|_{C^{0,\beta}(I;H^{s+2\alpha}(\R^{d-1}))}$.
\end{proof}

  
  In the next step we use the above propositions in study the asymptotics of the quantity
  \begin{equation} \label{eq:defn_Deltat}
    \Xi(f_t) := \int_{\R^{d-1}} f_t(x) e^{i\sigma \skp{x}{Ax}} \dx  
    -  f_t(0)\big(\frac{\pi}{\sigma}\big)^{\frac{d-1}{2}} |\det(A)|^{-\frac{1}{2}}
    e^{i\frac{\pi}{4}\sgn(A)}
  \end{equation}
  as $\sigma\to\infty$.

\begin{prop} \label{prop:stationary_phase}
   Let $A\in \R^{(d-1)\times (d-1)}$ be symmetric and invertible. Then, for all
  $s>\frac{d-1}{2}$ and $\alpha,\beta\in (0,1)$ there is a  $C>0$ such that for all $f\in
  C^1(I;H^{s+2\alpha}(\R^{d-1}))$ and $\sigma\geq 1$ we have
  \begin{align*}
    |\Xi(f_t)| &\leq C |\sigma|^{\frac{1-d}{2}-\alpha} \|f_t\|_{H^{s+2\alpha}(\R^{d-1})}, \\
    |\Xi(f_t)-\Xi(f_0)| &\leq C|t|^\beta |\sigma|^{\frac{1-d}{2}-\alpha}
    \|f\|_{C^{0,\beta}(I;H^{s+2\alpha}(\R^{d-1}))}.
  \end{align*}
\end{prop}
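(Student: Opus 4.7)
The plan is to rewrite $\Xi(f_t)$ as a single Fourier integral in which the leading stationary-phase contribution has already been subtracted, and then to invoke Proposition~\ref{prop:estimate}. Since $s>(d-1)/2$, Sobolev embedding ensures $f_t\in C^0(\R^{d-1})$ and $\hat f_t\in L^1(\R^{d-1})$, so that Fourier inversion at $x=0$ yields the pointwise identity $f_t(0)=(2\pi)^{(1-d)/2}\int_{\R^{d-1}}\hat f_t(\xi)\,d\xi$, and all the integrals below make sense classically.

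First, I would apply the tempered-distribution duality
\[
\int_{\R^{d-1}}\hat g(\xi)\varphi(\xi)\,d\xi=\int_{\R^{d-1}}g(x)\hat\varphi(x)\,dx
\]
with $g(x)=e^{i\sigma\skp{x}{Ax}}$ and $\varphi=\hat f_t$. Using $\hat{\hat f_t}(x)=f_t(-x)$ together with the invariance of $\skp{x}{Ax}$ under $x\mapsto -x$, and plugging in the explicit formula for $\hat g$ from Proposition~\ref{prop:FT_Fresnelphase}, this gives
\[
\int_{\R^{d-1}}f_t(x)e^{i\sigma\skp{x}{Ax}}\,dx=(2\sigma)^{\frac{1-d}{2}}|\det A|^{-\frac{1}{2}}e^{i\frac{\pi}{4}\sgn(A)}\int_{\R^{d-1}}\hat f_t(\xi)e^{-i\frac{\skp{\xi}{A^{-1}\xi}}{4\sigma}}\,d\xi.
\]

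Second, a short bookkeeping of constants using the elementary identity $(\pi/\sigma)^{(d-1)/2}(2\pi)^{(1-d)/2}=(2\sigma)^{(1-d)/2}$ shows that the term subtracted in the definition~\eqref{eq:defn_Deltat} of $\Xi(f_t)$ equals precisely $(2\sigma)^{(1-d)/2}|\det A|^{-1/2}e^{i\pi\sgn(A)/4}\int_{\R^{d-1}}\hat f_t(\xi)\,d\xi$. Subtracting yields the key identity
\[
\Xi(f_t)=(2\sigma)^{\frac{1-d}{2}}|\det A|^{-\frac{1}{2}}e^{i\frac{\pi}{4}\sgn(A)}\int_{\R^{d-1}}\hat f_t(\xi)\bigl(e^{-i\frac{\skp{\xi}{A^{-1}\xi}}{4\sigma}}-1\bigr)\,d\xi.
\]

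Third, the first inequality of Proposition~\ref{prop:estimate} bounds the remaining integral by $C\sigma^{-\alpha}\|f_t\|_{H^{s+2\alpha}(\R^{d-1})}$, and multiplying by the prefactor $\sigma^{(1-d)/2}$ yields the first asserted estimate. The second estimate then follows by applying the very same argument to $f_t-f_0$ in place of $f_t$ (legitimate by linearity of $\Xi$ in its argument) and invoking the second inequality of Proposition~\ref{prop:estimate}, which supplies the additional $|t|^\beta$ factor.

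The only step that requires genuine insight rather than routine verification is recognizing that the subtraction in~\eqref{eq:defn_Deltat} corresponds exactly to replacing the Fresnel factor $e^{-i\skp{\xi}{A^{-1}\xi}/(4\sigma)}$ by its $\sigma\to\infty$ limit $1$ in the Plancherel representation of the oscillatory integral. Once the constants are matched and this identification is made, the propositions already at our disposal perform all remaining analytic work.
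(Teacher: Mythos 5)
Your proposal is correct and follows essentially the same route as the paper: both rewrite $\int f_t(x)e^{i\sigma\skp{x}{Ax}}\,dx$ via the duality pairing and Proposition~\ref{prop:FT_Fresnelphase}, identify the subtracted term in~\eqref{eq:defn_Deltat} with the $\skp{1}{\hat f_t}$ contribution (i.e.\ $(2\pi)^{(d-1)/2}f_t(0)$ times the same prefactor), and reduce both estimates to Proposition~\ref{prop:estimate} using the linearity of $\Xi$. No gaps.
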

\begin{proof}
  We set $m:=  (2\sigma)^{\frac{1-d}{2}} |\det(A)|^{-\frac{1}{2}} e^{i\frac{\pi}{4}\sgn(A)}$.
  From Proposition \ref{prop:FT_Fresnelphase} we get
  \begin{align*}
    \int_{\R^{d-1}} f_t(x) e^{i\sigma \skp{x}{Ax}} \,dx
    &= \skp{  e^{i\sigma \skp{x}{Ax}} }{f_t}_{\mathcal{S}'(\R^{d-1})} \\
    &= \skp{  \mathcal{F}(e^{i\sigma \skp{x}{Ax}}) }{\hat f_t}_{\mathcal{S}'(\R^{d-1})} \\
    &= m\skp{ e^{-i\frac{\skp{\xi}{A^{-1}\xi}}{4\sigma}} }{ \hat f_t}_{\mathcal{S}'(\R^{d-1})}
    \\
    &= m\Big( \skp{1}{
    \hat f_t}_{\mathcal{S}'(\R^{d-1})} +
    \skp{e^{-i\frac{\skp{\xi}{A^{-1}\xi}}{4\sigma}} -1 }{
    \hat f_t}_{\mathcal{S}'(\R^{d-1})}
    \Big)   \\
    &= m(2\pi)^{\frac{d-1}{2}} f_t(0) +
    m\skp{e^{-i\frac{\skp{\xi}{A^{-1}\xi}}{4\sigma}} -1 }{ \hat f_t}_{\mathcal{S}'(\R^{d-1})}.
  \end{align*}
  Therefore, \eqref{eq:defn_Deltat} implies
  $$
    \Xi(f_t) = m\skp{e^{-i\frac{\skp{\xi}{A^{-1}\xi}}{4\sigma}} -1 }{ \hat f_t}_{\mathcal{S}'(\R^{d-1})} 
  $$
  so that both estimates follow from Proposition \ref{prop:estimate} since $\Xi$ is linear.
\end{proof}
 
  \medskip
  
  \noindent \textbf{Proof of Proposition~\ref{prop:resonant_pointwise_decay}:} 
  By~\eqref{eq:defn_LambdaPsi} we have $\lambda_s(k)=\Lambda(k+2\pi s),\psi_s(x,k)=\Psi(x,k+2\pi s)$ for all
  $x\in\Omega,k\in B,s\in\Z^d$. Moreover, by (A2) we can find a $\rho_1>0$ such that the Fermi surfaces
  $F_\tau$ are regular, compact hypersurfaces with positive Gaussian curvature as well provided $|\tau-\lambda|\leq \rho_1$.
  Since $\rho>0$ will later be chosen smaller than $\rho_1$, the properties  of $\chi$ in~\eqref{eq:Def_chi}
  imply that we can apply the coarea formula to obtain for all $x,y\in\R^d$
  \begin{align*}
    K^\eps_2(x,y) 
    &=  \avint_B\sum_{s\in\Z^d} \frac{\chi(\lambda_s(k)-\lambda)}{\lambda_s(k)-\lambda-i\eps} 
    \psi_s(x,k)\ov{\psi_s(y,k)} \,dk \\
    &= \avint_B \sum_{s\in\Z^d} \frac{\chi(\Lambda(k+2\pi s)-\lambda)}{\Lambda(k+2\pi s)-\lambda-i\eps} 
    \Psi(x,k+2\pi s)\ov{\Psi(y,k+2\pi s)}  \,dk  \\
    &=   \int_{\R^d} \frac{\chi(\Lambda(k)-\lambda)}{|B|(\Lambda(k)-\lambda-i\eps)} 
    \Psi(x,k)\ov{\Psi(y,k)}  \,dk  \\
    &=  \int_\R \frac{\chi(\tau-\lambda)}{\tau-\lambda-i\eps} \Big( \int_{F_\tau}
    \frac{\Psi(x,k)\ov{\Psi(y,k)}}{|B||\nabla\Lambda(k)|} \,d\mathcal{H}^{d-1}(k) \Big) \,d\tau \\
    &=  \int_\R \frac{\chi(\tau-\lambda)}{\tau-\lambda-i\eps} \Big( \int_{F_\tau} h_{x,y}(k)
    e^{i\sigma_{x,y} \skp{v_{x,y}}{k}} \,d\mathcal{H}^{d-1}(k) \Big) \,d\tau \\
    &=  \int_\R \frac{\chi(\tau-\lambda)}{\tau-\lambda-i\eps} a_{x,y}(\tau)\,d\tau. 
  \end{align*}
  Here we used the shorthand notations $\sigma_{x,y} := |x-y|$, $v_{x,y}:=\frac{x-y}{|x-y|}$ as well as
  \begin{align}\label{eq:defn_axy_hxy}
    \begin{aligned}
    a_{x,y}(\tau)
    &:= \int_{F_\tau} h_{x,y}(k) e^{i\sigma_{x,y} \skp{v_{x,y}}{k}} \,d\mathcal{H}^{d-1}(k),\\
     h_{x,y}(k)
     &:= \frac{\Psi(x,k)\ov{\Psi(y,k)}e^{-i\skp{x-y}{k}}}{|B||\nabla\Lambda(k)|}.
     \end{aligned}
  \end{align}
  As we will see below, for general periodic Schr\"odinger-type operators such as ours the terms
  $a_{x,y}(\tau)$ play the same role as the Herglotz waves in the case of the
  Laplacian (see Chapter 4.1 in~\cite{Ruiz_LN}). Actually, when $L=-\Delta$ the integral $a_{x,y}(\tau)$ is a
  Herglotz wave over the sphere of radius $\sqrt\tau$. 
  In view of Proposition~\ref{prop:Plemelj_formula}~(i) the only reasonable candidate for a limit of
  $K_2^\eps(x,y)$ as $\eps\to 0^\pm$ is given by
  \begin{equation} \label{eq:def_Kpm2}
    K^\pm_2(x,y)
    := p.v. \int_\R \frac{\chi(\tau-\lambda)}{\tau-\lambda} a_{x,y}(\tau)\,d\tau \pm i \pi a_{x,y}(\lambda).
  \end{equation}
  It therefore remains to find $\rho\in (0,\rho_1)$ such that 
  \begin{align}\label{eq:estimates_axy}
    |a_{x,y}(\lambda)| 
    \leq C(1+|x-y|)^{\frac{1-d}{2}}, \qquad 
    |a_{x,y}(\lambda+t)-a_{x,y}(\lambda)|
    \leq C t^\beta (1+|x-y|)^{\frac{1-d}{2}}
  \end{align} 
  holds whenever $|t|\leq \rho$. Having found such a $\rho$ the cut-off function $\chi$ is
  chosen according to~\eqref{eq:Def_chi} and Proposition~\ref{prop:Plemelj_formula} implies 
  \begin{align*}
    |K^\pm_2(x,y)|
    &\leq 2\pi  \Big( \int_0^\rho \frac{ |a_{x,y}(\lambda+t)-a_{x,y}(\lambda)|}{t}\,dt + 
    |a_{x,y}(\lambda)| \Big) \\
    &\leq 2\pi \Big( C(1+|x-y|)^{\frac{1-d}{2}} \int_0^\rho t^{\beta-1} \,dt + 
     C(1+|x-y|)^{\frac{1-d}{2}} \Big) \\
    &\leq C (1+|x-y|)^{\frac{1-d}{2}}, \\
    |K^\eps_2(x,y)-K^\pm_2(x,y)|
    &\leq \int_0^\rho \frac{2\eps}{\sqrt{\eps^2+t^2}}\frac{|a_{x,y}(\lambda+t)-a_{x,y}(\lambda)|}{t}\,dt \\
    &\leq C (1+|x-y|)^{\frac{1-d}{2}} \int_0^\rho \frac{\eps}{\sqrt{\eps^2+t^2}} t^{\beta-1}\,dt \\
    &\leq C \eps^\beta (1+|x-y|)^{\frac{1-d}{2}}
  \end{align*}
  so that Proposition~\ref{prop:resonant_pointwise_decay} is proved.
  The estimates \eqref{eq:estimates_axy} will be achieved via the method of stationary phase.
  
  \medskip 
     
  We only prove the much more difficult estimates \eqref{eq:estimates_axy} for large $\sigma_{x,y}=|x-y|$. For
  notational convenience we drop the subscripts, i.e. $\sigma=\sigma_{x,y},a=a_{x,y},v=v_{x,y},h=h_{x,y}$.
  Thanks to (A2) we find a $\rho_2\in (0,\rho_1)$ and nonempty bounded open sets
  $V^1,\ldots,V^m\subset\R^{d-1}$ such that for $|t|<\rho_2$ the Fermi surfaces $F_{\lambda+t}$ admit local
  graphical representations given via functions
  \begin{equation}\label{eq:regularity_phit}
    (t,z)\mapsto \phi^j_t(z) \in C^N( (-\rho_2,\rho_2)\times V^j)\quad\text{for }j=1,\ldots,m. 
  \end{equation}
  This means that we can find  permutation matrices $\pi_1,\ldots,\pi_m:\R^d\to\R^d$ and a $C^N$-partition of
  unity $\{\eta_1,\ldots,\eta_m\}$ associated with a covering of such graphical regions such that 
  $\supp(\eta_j)\subset\subset U$ and  
  \begin{align} \label{eq:formula_alambdat}
    \begin{aligned}
     a(\lambda+t)
     &= \int_{F_{\lambda+t}} h(k) e^{i\sigma\skp{v}{k}} \,d\mathcal{H}^{d-1}(k) \\
     &= \sum_{j=1}^m \int_{\R^{d-1}} \big(\eta_j h\big)\big(\pi_j(z,\phi_t^j(z))\big) 
     e^{i\sigma\skp{v}{\pi_j(z,\phi_t^j(z))}} \,dz   \\
     &= \sum_{j=1}^m \underbrace{\int_{\R^{d-1}} f^j_t(z) e^{i\sigma \Phi^j_{t,v}(z)}  \,dz}_{=:I^j_{t,v}}
  \end{aligned}
  \end{align}
  where 
  \begin{align} \label{eq:defn_Itv}
    \begin{aligned}
    f^j_t(z)&:= \big(\eta_j h\big)\big(\pi_j(z,\phi_t^j(z))\big) \sqrt{1+|\nabla\phi_t^j(z)|^2}, \\ 
    \Phi^j_{t,v}(z) &:= \skp{v}{\pi_j(z,\phi_t^j(z))}.
    \end{aligned}
  \end{align}
  The supports of the $f_t^j$ for $|t|<\rho_2$ are contained in the projection of $\supp(\eta_j\circ\pi_j)$
  onto the first $d-1$ coordinates. The latter set may without loss of generality assumed to be a
  closed ball, for otherwise we cover the compact set $\supp(\eta_j\circ\pi_j)$ by finitely many closed balls
  and refine the partition of unity accordingly. So we may assume that there are open balls $B_j$ such that  
  \begin{equation} \label{eq:supports}
    \supp(f^j_t)\subset \ov{B_j}\subset\subset V^j\subset\R^{d-1} \quad\text{for }
    |t|\leq \rho_2,\;j=1,\ldots,m.
  \end{equation}
  By \eqref{eq:regularity_phit} the Gaussian curvature depends continuously on $t$ and, given that
  $F_\lambda$ has positive Gaussian curvature by (A2), we find that there is a $\rho_3\in (0,\rho_2)$ such
  that the Gaussian curvature  $\mathcal{K}_t$ on $F_{\lambda+t}$  satisfies
  \begin{equation} \label{eq:Gauss_curvature_graph}
    \mathcal{K}_t(\pi_j(z,\phi_t^j(z))) =
    \frac{\det(D^2\phi_t^j(z))}{(1+|\nabla\phi_t^j(z)|^2)^{\frac{d+1}{2}}} \geq c>0 
    \quad\text{for }z\in \ov{B_j},|t|\leq \rho_3,\;j=1,\ldots,m.
  \end{equation}
  For every fixed $j=1,\ldots,m$ we now establish uniform estimates for the integrals $I_{t,v}^j$ with respect
  to unit vectors $v$ from regimes:
  \begin{align} \label{eq:def_R1jR2j}
    \begin{aligned}
    R_1^j &:= \{ v\in S^{d-1}: w:=\pi_j^{-1}v \text{ satisfies }w_d=0 \text{ or }
    w_d\neq 0,-w'/w_d \notin \nabla\phi^j_0(B_j^*)\},  \\
    R_2^j &:= R_2^{j,+}\cup R_2^{j,-}, \\
    R_2^{j,\pm}&:=\{ v\in S^{d-1}: w:=\pi_j^{-1}v \text{ satisfies } \pm w_d>0,\; -w'/w_d \in
    \nabla\phi^j_0(B_j^*)\}.
  \end{aligned}
  \end{align}
  Here we used the notation $w=(w',w_d)$ with $w'=(w_1,\ldots,w_{d-1})\in\R^{d-1}$ and
  the open ball $B_j^*\supset\supset B_j$ will be chosen sufficiently small and independently of the
  $f^j_t$ below, see part~(B). Notice that the sets $R_2^{j,+},R_2^{j,-}$ are disjoint and each of
  them is connected.
  
  \medskip  
  
  \noindent \textit{(A) Uniform estimates on $R_1^j$.}\\
  Due to \eqref{eq:defn_Itv} and $\ov B_j\subset\subset B_j^*$ there are 
  $\rho_4\in (0,\rho_3)$ and $c>0$ such that the lower bound $|\nabla\Phi^j_{t,v}(z)|\geq c>0$ holds for all
  $z\in \ov{B_j},v\in R_1^j,|t|\leq \rho_4$. So Proposition~\ref{prop:nonstationary_phase} and
  $\supp(f^j_t)\subset \ov{B_j}$ (see \eqref{eq:supports}) yields for those $t,v$ and any fixed
  $\alpha\in (0,1)$ the estimates
  \begin{align*}
    |I_{t,v}^j|
    &\leq C|\sigma|^{1-N}\|f_t^j\|_{W^{N-1,1}(\R^{d-1})} \\  
    &\stackrel{\eqref{eq:regularity_phit},\eqref{eq:defn_Itv}}{\leq} 
    C|\sigma|^{1-N}\|h\|_{C^{N-1}(\ov U)} \\ 
    &\stackrel{\eqref{eq:defn_axy_hxy}}{\leq} C|\sigma|^{1-N},  \\
    |I_{t,v}^j-I_{0,v}^j|
    &\leq C|t|^\beta |\sigma|^{\alpha+1-N}\|f^j\|_{C^{0,\beta}(I;W^{N-1,1}(\R^{d-1}))} \\ 
    &\stackrel{\eqref{eq:regularity_phit},\eqref{eq:defn_Itv}}{\leq}  
    C|t|^\beta |\sigma|^{\alpha+1-N}\|h\|_{C^{N-1,\beta}(\ov U)} \\
    &\stackrel{\eqref{eq:defn_axy_hxy}}{\leq} C|t|^\beta |\sigma|^{\alpha+1-N}. 
  \end{align*}
  Here we used $\supp(\eta_j)\subset\subset U$ and that the norms of the maps $(t,z)\mapsto \Phi^j_{t,v}(z)$
  in $W^{N,\infty}(\ov B_j)$ are bounded independently of the unit vector $v$. Notice also that  
  $(x,y)\mapsto h_{x,y}(k)=h(k)$ is $\Z^d\times\Z^d$-periodic for every fixed 
   $k\in \ov U$, so that the finiteness of $\sup_{x\in\Omega} \|\Psi(x,\cdot)\|_{C^{N-1,\beta}(\bar U)}$ from
   (A2) implies $\sup_{x\in\R^d} \|\Psi(x,\cdot)\|_{C^{N-1,\beta}(\bar U)}<\infty$, which we used in the last
   inequality.
   
  \medskip

  \noindent \textit{(B) Uniform estimates on $\ov R_2^j$.}\\ 
  We choose the open balls $B_j^*,B_j^{**}$ such that $V^j\supset \supset B_j^{**}\supset\supset
  B_j^*\supset\supset B_j$ and
  \begin{equation}\label{eq:nondegeneracy_phit}
    \det(D^2\phi^j_t(z))\geq c>0 \quad\text{for all }z\in B_j^{**} \text{ and }|t|\leq\rho_4, 
  \end{equation}
  which assures that $\nabla\phi_t$ is a $C^{N-1}$-diffeomorphism on the balls $B_j^*,B_j^{**}$ (because
  these are convex).
  In particular,  from $B_j^{**}\supset\supset B_j^*$ and \eqref{eq:def_R1jR2j} it follows that there is
  $\rho_5\in (0,\rho_4)$ such that for all $v\in\ov R_2^j$ and $|t|\leq \rho_5$ the points 
  $z^j_{t,v}:= (\nabla\phi^j_t|_{B_j^{**}})^{-1}(-(\pi_j^{-1}v)'/(\pi_j^{-1}v)_d)\in B_j^{**}$ are
  well-defined with 
  $$
    \nabla\Phi^j_{t,v}(z^j_{t,v}) = 0 \qquad\text{whenever } v\in \ov R_2^j,\;|t|\leq \rho_5,
  $$
  see~\eqref{eq:defn_Itv}. Having thus determined the unique point of stationary phase in $B_j^{**}$ 
  we now make a local coordinate transformation around $z^j_{t,v}$ which makes 
  the phase function $\Phi^j_{t,v}$ look like a quadratic form.  
  Since $z^j_{t,v}\in B_j^{**}$ and $z^j_{0,v}\in B_j^*$ for all $|t|\leq \rho_5,v\in \ov R_2^j$ (see
  \eqref{eq:def_R1jR2j}) and $\Phi^j_{t,v}\in C^N(B_j^{**})$, the Morse Lemma provides  
  $\delta_j^*,\delta_j^{**}>0$ and $\rho_6\in (0,\rho_5)$ and 
  $C^{N-2}$-diffeomorphisms $\psi^j_{t,v}:B_{\delta_j^*}(0)\to
  \psi^j_{t,v}(B_{\delta_j^*}(0))$ with 
  $$
    \psi^j_{t,v}(0)=0,\qquad
    B_{\delta_j^{**}}(0) \subset \psi^j_{t,v}(B_{\delta_j^*}(0)),\qquad
    z^j_{t,v}+\psi^j_{t,v}(B_{\delta_j^*}(0))\subset B_j^{**}
  $$ 
  and
  \begin{align} \label{eq:local_coordinates}
    \begin{aligned}
    & \Phi^j_{t,v}(z^j_{t,v}+\psi^j_{t,v}(y)) - \Phi^j_{t,v}(z^j_{t,v}) =
    \pm\Big(-y_1^2-\ldots-y_m^2+y_{m+1}^2+\ldots+y_{d-1}^2\Big) =: \pm \skp{y}{Ay} \\ 
    &\text{whenever}\qquad y\in B_{\delta_j^*}(0)\subset\R^{d-1}, v\in \ov R_2^{j,\pm},\;|t|<\rho_6.
  \end{aligned}
  \end{align}
  Notice that $m$ and hence the matrix $A$ are independent of $t,v$ since $R_2^{j,+},R_2^{j,-}$ are
  connected. Moreover, $\delta_j^*, \delta_j^{**}>0$ may be
  chosen independently of $t,v$ since $|\det(\Jac(\psi^j_{t,v})(0))|$ is bounded from below and from above
  for $|t|\leq \rho_6,v\in \ov R_2^j$ as follows from \eqref{eq:regularity_phit}, \eqref{eq:nondegeneracy_phit}
  and
  \begin{align}\label{eq:formula_A}
    \begin{aligned}
    A 
    &= \frac{1}{2}\Jac(\psi^j_{t,v})(0)^T D^2\Phi^j_{t,v}(z^j_{t,v})\Jac(\psi^j_{t,v})(0) \\
    &\stackrel{\eqref{eq:defn_Itv}}{=} 
    \pm \frac{|(\pi_j^{-1}v)_d|}{2}\Jac(\psi^j_{t,v})(0)^T D^2\phi^j_t(z^j_{t,v})\Jac(\psi^j_{t,v})'(0) .
    \end{aligned}
  \end{align}
  Let us finally set $\rho:=\rho_6$ so that it remains to find uniform bounds for integrals
  $I^j_{t,v}$ for $|t|\leq \rho$ and $v\in\ov R_2^j$.
  
  \medskip
  
  To this end we choose a cut-off function $\chi_j\in C_0^\infty(\R^{d-1})$ such that $\chi_j
  \equiv 1$ near $0$ with support in $B_{\delta_j^{**}}(0)$ and set $I^j_{t,v}=I_{t,v}^{j,1}+I_{t,v}^{j,2}$
  where
  \begin{align*}
    I_{t,v}^{j,1} 
    &= \int_{\R^{d-1}} f^j_t(z) (1-\chi_j(z- z^j_{t,v})) e^{i\sigma\Phi^j_{t,v}(z)}\,dz,  \\
    I_{t,v}^{j,2} 
    &= \int_{\R^{d-1}} f^j_t(z) \chi_j(z- z^j_{t,v}) e^{i\sigma\Phi^j_{t,v}(z)}\,dz.  
  \end{align*}
  We first discuss $I_{t,v}^{j,1}$. As in (A) the phase function $\Phi^j_{t,v}$ satisfies
  $|\nabla\Phi^j_{t,v}|\geq c>0$ on the support of $z\mapsto f^j_t(z) (1-\chi_j(z- z^j_{t,v}))$, since
  the only stationary point of $\Phi^j_{t,v}$ in $B_j^{**}$ (and hence in $\supp(f^j_t)$)
  is $z^j_{t,v}$, but $\chi_j\equiv 1$ near zero. So the same estimates as in part (A)
  yield
  \begin{align} \label{eq:estimates_Itvj1}
    |I_{t,v}^{j,1}|\leq C|\sigma|^{1-N}, \qquad
    |I_{t,v}^{j,1}-I_{0,v}^{j,1}| \leq C|t|^\beta |\sigma|^{\alpha+1-N}. 
  \end{align}
  The estimates for $I^{j,2}_{t,v}$ are based on Proposition~\ref{prop:FT_Fresnelphase} and
  Proposition~\ref{prop:stationary_phase}. Performing the change of variables
  from~\eqref{eq:local_coordinates} we obtain
  \begin{align} \label{eq:estimate_Itv2}
    \begin{aligned}
    &I^{j,2}_{t,v} 
    =  e^{i\sigma \Phi^j_{t,v}(z^j_{t,v})}  \int_{\R^d} g^j_{t,v}(y) 
    e^{\pm i\sigma \skp{y}{Ay}}\,dy  \\
    &\text{where}\qquad  
    g^j_{t,v}(y) := f^j_t(z^j_{t,v}+\psi^j_{t,v}(y)) \chi_j(\psi^j_{t,v}(y))|\det(\Jac(\psi^j_{t,v})'(y))|,\quad
    v\in R_2^{j,\pm}.
  \end{aligned}
  \end{align}
  In view of \eqref{eq:defn_Deltat} we first calculate
  $g^j_{t,v}(0)|\det(A)|^{-1/2}$. Exploiting 
  \begin{equation} \label{eq:Jacobian}
    1+|\nabla\phi^j_t(z^j_{v,t})|^2 = 1+ |v'|^2|v_d|^{-2} = |v_d|^{-2}
  \end{equation} 
  we get  
  \begin{align*}
    g^j_{t,v}(0)  |\det(A)|^{-1/2} 
    &\stackrel{\eqref{eq:estimate_Itv2}}{=}
     f^j_t(z^j_{t,v}) \chi_j(0)|\det(\Jac(\psi^j_{t,v})(0))| |\det(A)|^{-1/2}  \\
	&\stackrel{\eqref{eq:formula_A}}{=}  2^{\frac{d-1}{2}} f^j_t(z_{t,v})
	|\det(D^2\Phi^j_{t,v}(z^j_{t,v}))|^{-1/2}  \\
    &\stackrel{\eqref{eq:defn_Itv}}{=} 2^{\frac{d-1}{2}} \big(
    (\eta_j h)\circ\pi_j\big)\big(z^j_{t,v},\phi^j_t(z^j_{t,v})\big) \sqrt{1+|\nabla\phi^j_t(z^j_{v,t})|^2}
    |\det(v_d D^2\phi^j_t(z^j_{t,v}))|^{-1/2} \\
    &\stackrel{\eqref{eq:Jacobian}}{=} 2^{\frac{d-1}{2}}(\eta_j
    h)\big(\pi_j(z^j_{t,v},\phi^j_t(z^j_{t,v}))\big) \left(
    \frac{\det(D^2\phi^j_t(z^j_{t,v}))}{(1+|\nabla\phi^j_t(z^j_{t,v})|^2)^{\frac{d+1}{2}}}  \right)^{-1/2}
      \\
    &\stackrel{\eqref{eq:Gauss_curvature_graph}}{=} 2^{\frac{d-1}{2}} (\eta_j
    h)\big(\pi_j(z^j_{t,v},\phi^j_t(z^j_{t,v}))\big)
    \mathcal{K}_t\big(\pi_j(z^j_{t,v},\phi^j_t(z^j_{t,v}))\big)^{-1/2}.
  \end{align*}
  So the definition of $\Xi$ and~\eqref{eq:estimate_Itv2} imply for $v\in R_2^{j,\pm}$
  \begin{align}  \label{eq:formula_Itv2}
    \begin{aligned}
    I_{t,v}^{j,2}  
    &= e^{i(\sigma\Phi^j_{t,v}(z^j_{t,v})\pm \frac{\pi}{4}\sgn(A))}
    \Big(\frac{2\pi}{\sigma}\Big)^{\frac{d-1}{2}} (\eta_jh)\big( \pi_j(z_{t,v},\phi_t^j(z^j_{t,v}))\big)
    \mathcal{K}_t\big(\pi_j(z^j_{t,v},\phi_t^j(z^j_{t,v}))\big)^{-1/2} \\
    &+ e^{i\sigma\Phi^j_{t,v}(z^j_{t,v})}  \Xi (g^j_{t,v}). 
    \end{aligned}
  \end{align}
  The second term is estimated with the aid of
  Proposition~\ref{prop:stationary_phase}, so we choose $s\in\R,\alpha\in (0,1)$ such that
  $\frac{d-1}{2}<s<s+2\alpha\leq N-1$. This is possible due to $N>\frac{d+1}{2}$, see assumption (A2). So the
  Proposition yields for all $|\sigma|\geq 1$   
  \begin{align}
    \begin{aligned}
     |\Xi(g^j_{t,v})| 
    &\leq  C |\sigma|^{\frac{1-d}{2}-\alpha} \|g^j_{t,v}\|_{H^{s+2\alpha}(\R^{d-1})} 
     \;\leq\;  C|\sigma|^{\frac{1-d}{2}-\alpha} \|f^j_t\|_{H^{s+2\alpha}(\R^{d-1})} 
      \\ 
    &\leq  C |\sigma|^{\frac{1-d}{2}-\alpha}\|f^j_t\|_{H^{N-1}(\R^{d-1})}          
    \;\leq\; C|\sigma|^{\frac{1-d}{2}-\alpha} \|h\|_{C^{N-1}(\ov U)}   \\
    &\leq  C|\sigma|^{\frac{1-d}{2}-\alpha}, \\   
    |\Xi(g^j_{t,v}) - \Xi(g^j_{0,v})| 
    &\leq   C|\sigma|^{\frac{1-d}{2}-\alpha} \|f^j_t-f^j_0\|_{H^{N-1}(\R^{d-1})}
    \leq C|t|^\beta |\sigma|^{\frac{1-d}{2}-\alpha}. \label{eq:estimates_Ijtv2}
    \end{aligned}
  \end{align}
  Combining the estimates for $I_{t,v}^{j,1},I_{t,v}^{j,2}$ resulting from
  \eqref{eq:estimates_Itvj1} and $N>\frac{d+1}{2}$ and \eqref{eq:formula_Itv2},\eqref{eq:estimates_Ijtv2}
  we get the desired estimates. \qed 
  
  \bigskip
  
  We point out that the computations of the previous Proposition even reveal the asymptotics of
  $K_2^\pm(x,y)$ as $|x-y|\to\infty$. To see this we recall from the above proof 
  $$
    a_{x,y}(\lambda+t)
    = \int_{F_{\lambda+t}} h_{x,y}(k)\,d\mathcal H^{d-1}(k) 
    \stackrel{\eqref{eq:formula_alambdat}}{=} \sum_{j=0}^m I^j_{t,v}. 
  $$
  The estimates of these integrals represented the main part of the Proposition.
  For given $x\neq y$ we define the resonant points on the Fermi surfaces  
  \begin{align*}
    \mathfrak R_{\lambda+t}^\pm(x,y) &:= \Big\{k\in F_{\lambda+t} :
    \nu_{\lambda+t}(k)=\pm\frac{x-y}{|x-y|}\Big\}, \qquad\text{where } \\
    \nu_{\lambda+t}(\pi_j(z,\phi_t^j(z)))
    &= \frac{\pi_j(\nabla\phi_t^j(z),-1)}{\sqrt{1+|\nabla\phi_t^j(z)|^2}}.
  \end{align*}
  So $\nu_{\lambda+t}$ is the outer unit normal vector field along $F_{\lambda+t}$.
  The reason for this definition is 
  $$
    \nu_{\lambda+t}(k)=\pm \frac{x-y}{|x-y|}
    \quad\Leftrightarrow\quad
    k=\pi_j(z^j_{t,v},\phi^j_t(z^j_{t,v}))) \text{ for }v=\frac{x-y}{|x-y|}\in R_2^{j,\mp},\;j=0,\ldots,m.
  $$
  Notice that $\sign((\pi_jv)_d)=\skp{\nu(k)}{\frac{x-y}{|x-y|}}$. 
  So, writing $\sigma=\sigma_{x,y}=|x-y|,v=v_{x,y}=\frac{x-y}{|x-y|}$ as above, we get that the dominant part
  of $a_{x,y}(\lambda+t)$ as $|\sigma|=|x-y|\to\infty$ is (see~\eqref{eq:formula_Itv2})
  \begin{align*}
     &\Big(\frac{2\pi}{\sigma}\Big)^{\frac{d-1}{2}}  
       \sum_{j=0}^m e^{i(\sigma\Phi^j_{t,v}(z^j_{t,v})+ \sign((\pi_j^{-1}v)_d) \frac{\pi}{4})} (\eta_j
       h_{x,y})\big(\pi_j(z^j_{t,v},\phi^j_{t,v}(z^j_{t,v}))\big) \mathcal
       K_t\big(\pi_j(z^j_{t,v},\phi^j_{t,v}(z^j_{t,v}))\big)^{-1/2} \\ 
    &= \Big(\frac{2\pi}{|x-y|}\Big)^{\frac{d-1}{2}} 
       \sum_{j=0}^m \sum_{k\in\mathfrak R_{\lambda+t}^\pm(x,y)} e^{i(\sigma\skp{v}{k}\mp \frac{\pi}{4}\sgn(A))}
       (\eta_j h_{x,y})(k)  \mathcal K_t(k)^{-1/2}  \\
    &= \Big(\frac{2\pi}{|x-y|}\Big)^{\frac{d-1}{2}} 
    \sum_{k\in\mathfrak R_{\lambda+t}^\pm(x,y)} e^{i(\skp{x-y}{k}\mp \frac{\pi}{4}\sgn(A))}
        h_{x,y}(k)  \mathcal K_t(k)^{-1/2}.   
  \end{align*}
  This formulas allows to identify a farfield at the frequency $\lambda+t$. 
  
  \medskip
  
  Let us demonstrate what the computations reveal in the case of the Laplacian.
  From~\eqref{eq:eigenpairs_constantpotential} we deduce that in this special case the Fermi surface
  $F_\lambda=\{k\in\R^d: \Lambda(k)=|k|^2=\lambda\}$ is the sphere of radius $\sqrt\lambda$ centered at the
  origin. So  $\nu(k)=k/|k|$ for all $k\in F_\lambda$ and the principal curvatures are $\lambda^{-1/2}>0$,
  which implies $\sgn(A)=d-1$ and $\mathcal K_0 \equiv \lambda^{(1-d)/2}$. This yields 
  $$
    \mathfrak R_\lambda^\pm(x,y) = \Big\{ \pm \sqrt\lambda \frac{x-y}{|x-y|}\Big\}
  $$ 
  as well as
  $$
    h_{x,y}(k) 
    \stackrel{\eqref{eq:defn_axy_hxy}}{=}
     \frac{\Psi(x,k)\ov{\Psi(y,k)}e^{-i\skp{x-y}{k}}}{|B||\nabla\Lambda(k)|}
    \stackrel{\eqref{eq:eigenpairs_constantpotential}}{=}
    \frac{e^{i\skp{x}{k}}e^{-i\skp{y}{k}}e^{-i\skp{x-y}{k}}}{(2\pi)^d|2k|} 
    = \frac{1}{2(2\pi)^d \sqrt\lambda}.
  $$
  From~\eqref{eq:def_Kpm2} we therefore get that the point evaluation part of $K_2^\pm(x,y)$ is
  \begin{align*}
     \pm i\pi a_{x,y}(\lambda)  
    &\sim  \pm i\pi\cdot \Big(\frac{2\pi}{|x-y|}\Big)^{\frac{d-1}{2}}  
    \sum_{k\in\mathfrak R_\lambda^\pm(x,y)} e^{i(\skp{x-y}{k}-\frac{d-1}{4}\pi)}  
    \cdot \frac{\lambda^{\frac{d-1}{4}}}{(2\pi)^d2\sqrt\lambda}  \\ 
   &=  \pm i \frac{1}{2\sqrt\lambda}
    \frac{\Real(e^{i(\sqrt\lambda |x-y|-\frac{d-1}{4}\pi)})}{(2\pi\sqrt\lambda |x-y|)^{\frac{d-1}{2}}} \\
   &= \pm  i \Imag\left(\frac{1}{2\sqrt\lambda}  
    \frac{e^{i(\sqrt\lambda |x-y|-\frac{d-3}{4}\pi)}}{(2\pi\sqrt\lambda |x-y|)^{\frac{d-1}{2}}}\right)
    \qquad\text{as }|x-y|\to\infty.    
  \end{align*} 
  We see that these asymptotics are consistent with the well-known asymptotics of the imaginary part of the
  outgoing respectively incoming Green's function $G_\pm$ for the Helmholtz operator $-\Delta-(\lambda\pm i0)$ for
  $\lambda>0$. Indeed, we have as well
  \begin{align*}
    i\Imag(G_\pm(x,y))
    &\sim \pm i\Imag\left(\frac{i}{4} \Big(\frac{2\pi|x-y|}{\sqrt \lambda}\Big)^{\frac{2-d}{2}}
    H^{(1)}_{\frac{d-2}{2}}(\sqrt \lambda|x-y|) \right)\\
    &\sim \pm i\Imag\left(\frac{i}{4} \Big(\frac{2\pi|x-y|}{\sqrt \lambda}\Big)^{\frac{2-d}{2}}
    \Big(\frac{2}{\pi \sqrt \lambda |x-y| }\Big)^{1/2} e^{i(\sqrt \lambda|x-y|- \frac{d-1}{4}\pi)} \right)\\ 
    &\sim \pm i\Imag\left( \frac{1}{2\sqrt\lambda}       
       \frac{e^{i(\sqrt \lambda|x-y|-\frac{(d-3)\pi}{4})}}{(2\pi\sqrt\lambda |x-y|)^{\frac{d-1}{2}}}\right)
       \qquad\text{as }|x-y|\to\infty.  
  \end{align*}
  so that the above computations show that the imaginary parts of both asymptotics coincide. The
  corresponding computations concerning the asymptotics of the real part seem to be much more delicate and
  thorough discussion of those remains to be done elsewhere.
  
  \medskip
    
    Let us finally comment on the fact that in (A2) we imposed the positivity of the Gaussian curvature
    on the whole of the Fermi surface $F_\lambda$. The considerations above lead to the observation that 
    the decay rate  $|x-y|^{\frac{1-d}{2}}$ of the fundamental solution is valid along
    all rays $x-y \in\R \nu(k)$ where $\nu$ is the normal vector field along the Fermis surface 
    and $k$ belongs to a positively curved pieces of $F_\lambda$. The same decay rate is actually expected to
    hold for negatively curved pieces since the method of stationary phase works as well and the same
    computations as above, up to changes of signs, remain valid.
    So the only problematic points appear to be those where the curvature of the Fermi surface vanishes,
    since the coefficient function $\mathcal K_t^{-1/2}$ becomes infinite at these points. It seems reasonable
    to expect that that depending on the number (and possibly order) of vanishing principal curvatures, the
    fundamental solution has an even weaker decay rate, see~\cite{Littman_Fourier}
    or~\cite{Ste_harmonic_analysis} pp.360 for related results.

%

\section{Proof of Proposition \ref{prop:estimate_FBtransform_kernel}}  \label{sec:proof_prop_II}

  \begin{prop} \label{prop:gj}
    Let $R_j$ be defined as in \eqref{eq:def_Repsj} and set 
    $$
      g_j(\xi) := \sum_{m\in R_j} e^{im\xi} \qquad \text{for }\xi\in\R^d,\;j\in \N.
    $$ 
    Then for all compact subsets $K\subset\R^{d-1}$ and $\delta>0$ there is a $C>0$ such that for all
    $\xi'\in\R^{d-1},s,t\in\R$ and $j\in\N$ the following estimates hold:
    $$
      \int_K |g_j(\xi',s)|\,d\xi'\leq C 2^{j(1+\delta)} ,\qquad
      \int_K |g_j(\xi',s)-g_j(\xi',t)|\,d\xi' \leq C2^{j(1+\delta)}|s-t|^{\delta/2}. 
    $$
  \end{prop}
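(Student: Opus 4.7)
The plan is to exploit the fact that the defining condition for $R_j$ is imposed coordinate-wise, so for $j\geq 1$ one has the product factorization $R_j=A_j^d$ with $A_j:=\{n\in\Z:2^{j-1}\leq|n|<2^j\}$, and consequently
$$g_j(\xi)=\prod_{i=1}^d h_j(\xi_i),\qquad h_j(s):=\sum_{n\in A_j}e^{ins}=2\sum_{n=2^{j-1}}^{2^j-1}\cos(ns).$$
Writing $g_j(\xi',s)=h_j(s)\prod_{i=1}^{d-1}h_j(\xi_i)$ and $g_j(\xi',s)-g_j(\xi',t)=(h_j(s)-h_j(t))\prod_{i=1}^{d-1}h_j(\xi_i)$, both integrals in the proposition split into a product of one-dimensional integrals multiplied by a pointwise factor in $s,t$. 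The case $j=0$ is trivial because $g_0\equiv 1$.

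Next I would establish a sharp pointwise bound on the one-dimensional sum $h_j$. Since $h_j$ is the real part of a geometric sum of length $2^{j-1}$, a direct computation yields the Dirichlet-type bound
$$|h_j(s)|\leq C\min\bigl(2^j,\,|s|^{-1}\bigr)\qquad\text{for }|s|\leq\pi,$$
extended $2\pi$-periodically. Splitting the integral $\int_{-\pi}^\pi|h_j(s)|\,ds$ at the scale $|s|\sim 2^{-j}$ and using each of the two bounds on the appropriate piece gives the logarithmic estimate $\int_{-\pi}^\pi|h_j|\,ds\leq C(1+j)$. By $2\pi$-periodicity this yields $\int_{K_i}|h_j(\xi_i)|\,d\xi_i\leq C_K(1+j)$ for any compact interval $K_i\subset\R$, and hence, after enlarging $K$ to a product of compact intervals and applying Fubini,
$$\int_K\prod_{i=1}^{d-1}|h_j(\xi_i)|\,d\xi'\leq C(1+j)^{d-1}\leq C_\delta\, 2^{j\delta/2}\qquad\text{for any }\delta>0.$$
Combined with the trivial estimate $|h_j(s)|\leq 2^j$, this already produces the first bound of the proposition (with a $2^{j(1+\delta/2)}$, which is stronger than required).

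For the Hölder-in-$s$ estimate I would interpolate the two pointwise bounds $\|h_j\|_\infty\leq C\,2^j$ (established above) and $\|h_j'\|_\infty\leq C\sum_{n\in A_j}|n|\leq C\,2^{2j}$. Writing $|h_j(s)-h_j(t)|^{\delta/2}\cdot|h_j(s)-h_j(t)|^{1-\delta/2}$ and bounding each factor respectively by $\bigl(C\,2^{2j}|s-t|\bigr)^{\delta/2}$ and $\bigl(C\,2^j\bigr)^{1-\delta/2}$ gives
$$|h_j(s)-h_j(t)|\leq C\,2^{j(1+\delta/2)}\,|s-t|^{\delta/2}.$$
Multiplying by the earlier bound $\int_K\prod_{i=1}^{d-1}|h_j(\xi_i)|\,d\xi'\leq C_\delta\,2^{j\delta/2}$ yields
$$\int_K|g_j(\xi',s)-g_j(\xi',t)|\,d\xi'\leq C_\delta\,2^{j(1+\delta)}\,|s-t|^{\delta/2},$$
which is the second inequality.

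The only minor obstacle is cosmetic: the $L^1$ norm $\int|h_j|\,ds$ grows linearly in $j$, producing a polynomial factor $(1+j)^{d-1}$ from the $d-1$ product coordinates. This forces a small arbitrary loss $2^{j\delta}$, but is harmless since $\delta>0$ is arbitrary in the statement and any polynomial in $j$ is dominated by $2^{j\delta}$ for large $j$. No deeper tool than the Dirichlet kernel estimate and a two-point interpolation is needed.
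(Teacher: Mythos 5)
Your proof is correct and follows essentially the same route as the paper's: reduction to one-dimensional Dirichlet-type kernels, the $L^1$ bound $\int|h_j|\leq C(1+j)$ obtained from $|h_j(s)|\leq C\min(2^j,|s|^{-1})$ for the $d-1$ integrated coordinates, and a sup/Lipschitz interpolation giving $|h_j(s)-h_j(t)|\leq C2^{j(1+\delta/2)}|s-t|^{\delta/2}$ in the remaining coordinate (the paper reaches the same bound via $|e^{ip(s-t)}-1|\leq 2|p(s-t)|^{\delta/2}$). The only difference is algebraic: you factor $g_j=\prod_i h_j(\xi_i)$ using the literal product structure $R_j=A_j^d$, whereas the paper writes $g_j=\prod_p D_j(\xi_p)-\prod_p D_{j-1}(\xi_p)$, which tacitly reads $R_j$ as an $\ell^\infty$-shell; both identities lead to the same estimates.
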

  \begin{proof}
    By definition of $R_j$ the function $g_j$ can be written as
    $$
      g_j(\xi) =  \prod_{p=1}^d D_j(\xi_p)-\prod_{p=1}^d D_{j-1}(\xi_p), \qquad\text{where } 
      D_j(z) := \sum_{m=-2^j}^{2^j} e^{imz}  = \frac{\sin((2^j+\frac{1}{2})z)}{\sin(\frac{z}{2})}.
    $$
    The Dirichlet kernels satisfy the estimates $|D_j(z)| \leq C 2^j$ and hence for
    $\xi'=(\xi_1,\ldots,\xi_{d-1})\in K$ 
    $$
      |g_j(\xi',s)| 
       \leq \sum_{\iota\in\{j-1,j\}}  \Big(\prod_{p=1}^{d-1} |D_\iota(\xi_p)|\Big)|D_\iota(s)| 
       \leq  C 2^j \sum_{\iota\in\{j-1,j\}}\prod_{p=1}^{d-1} |D_\iota(\xi_p)|. 
    $$
    Similarly, the estimate  
    $$
      |D_j(s)-D_j(t)| 
      \leq \sum_{p=-2^j}^{2^j} |e^{ipt} (e^{ip(s-t)}-1)|
      \leq \sum_{p=-2^j}^{2^j} 2|p(s-t)|^{\delta/2}   
      \leq C2^{j(1+\delta/2)} |s-t|^{\delta/2} 
    $$
    implies 
    \begin{align*}
       |g_j(\xi',s)-g_j(\xi',t)|
       &= \Big| \Big(\prod_{p=1}^{d-1} D_j(\xi_p)\Big) D_j(s) -  \Big(\prod_{p=1}^{d-1} D_{j-1}(\xi_p)\Big)
       D_{j-1}(s) \\
       &\quad -  \Big(\prod_{p=1}^{d-1} D_j(\xi_p)\Big) D_j(t)+  \Big(\prod_{p=1}^{d-1} D_{j-1}(\xi_p)\Big)
       D_{j-1}(t) \Big|  \\
      &\leq  \sum_{\iota\in\{j-1,j\}} \Big(\prod_{p=1}^{d-1} |D_\iota(\xi_p)|\Big) |D_\iota(s)-D_\iota(t)|
      \\
      &\leq C 2^{j(1+\delta/2)}|s-t|^{\delta/2}   \sum_{\iota\in\{j-1,j\}} \Big(\prod_{p=1}^{d-1}
      |D_\iota(\xi_p)|\Big).
    \end{align*}
    Integrating the first estimate with respect to $\xi'$ over $K\subset [-M,M]^{d-1}$ gives
    \begin{align*}
      \int_K |g_j(\xi',s)|\,d\xi'   
      \leq C 2^j \sum_{\iota\in\{j-1,j\}} \Big(\int_{-M}^M |D_\iota|\Big)^{d-1} 
      \leq C 2^j \sum_{\iota\in\{j-1,j\}} \iota^{d-1} 
      \leq C 2^{j(1+\delta)}  
    \end{align*}
    where the final $C$ depends on $M$ and thus on the compact set $K$, but not on $j$. The estimate for the
    integral of the Dirichlet kernel over $[-M,M]$ can be found in \cite{Kach_ST_theorem}~(Lemma~7).
    The estimate  for the other integral is similar.
  \end{proof}

  \medskip
  
  \noindent 
  \textbf{Proof of Proposition~\ref{prop:estimate_FBtransform_kernel}:} We have to show that for all 
  $\delta>0$ there is a $C_{\delta}>0$ such that for all $\eps\in\R\sm\{0\}$ the following inequality holds: 
  \begin{align*}
    \sup_{x,y\in\Omega, l\in B} \big| U(K^{\eps,j}_{2}(\cdot,y))(x,l)\big| 
    &\leq C_{\delta} 2^{j(1+\delta)}  \;\;\qquad\text{for all } j\in\N_0 \text{ and } \\
    \sup_{x,y\in\Omega, l\in B} \big| U(K^{\eps,j}_{2}(\cdot,y)-K^{\pm,j}_2(\cdot,y))(x,l)\big| 
    &\leq C \eps^\beta 2^{j(1+\delta)}  \qquad\text{for all } j\in\N_0 \text{ as }\eps\to 0^\pm.
  \end{align*}
  We only prove the first inequality in detail. The formulas for $K^\eps_2,K_2^{\eps,j}$ from
  \eqref{eq:def_Keps_12},\eqref{eq:def_Repsj} yield  for all $x,y\in\Omega$ and $l\in B$
  \begin{align*} 
    \begin{aligned}
    U(K^{\eps,j}_{2}(\cdot,y))(x,l)
    &= \sum_{m\in\Z^d}  e^{iml}  K_{2}^\eps(x-m,y) 1_{R_j}([x-m]-[y])  \\
    &= \sum_{m\in\Z^d}  e^{iml} 1_{R_j}(-m) \avint_B
    \sum_{s\in\Z^d} \frac{\chi(\lambda_s(k)-\lambda)}{\lambda_s(k)-\lambda-i\eps} \psi_s(x-m,k)\ov{\psi_s(y,k)}
    \,dk  \\
    &=  \avint_B \sum_{s\in\Z^d}  \frac{\chi(\lambda_s(k)-\lambda)}{\lambda_s(k)-\lambda-i\eps}
    \psi_s(x,k)\ov{\psi_s(y,k)} \Big( \sum_{m\in R_j}e^{im(l-k)}  \Big)\,dk \\
    &=  \avint_B \sum_{s\in\Z^d}  \frac{\chi(\lambda_s(k)-\lambda)}{\lambda_s(k)-\lambda-i\eps}
    \psi_s(x,k)\ov{\psi_s(y,k)} g_j(l-k) \,dk.   
    \end{aligned}
  \end{align*}
  In order to simplify this expression further we use assumption (A2). From \eqref{eq:defn_LambdaPsi}
  and~\eqref{eq:Def_chi} we get $\lambda_s(k)=\Lambda(k+2\pi s),\psi_s(x,k)=\Psi(x,k+2\pi s)$ for all
  $s\in\Z^d$ such that $\chi(\lambda_s(k)-\lambda)\neq 0$. Using the $2\pi\Z^d$-periodicity of $g_j$ we arrive at
  \begin{align*} 
    U(K^{\eps,j}_{2}(\cdot,y))(x,l)
    &= \sum_{s\in\Z^d} \int_{B+2\pi s} \frac{\chi(\Lambda(k)-\lambda)}{|B|(\Lambda(k)-\lambda-i\eps)}
    \Psi(x,k)\ov{\Psi(y,k)} g_j(l-k)  \,dk \\
    &= \int_{\R^d} \frac{\chi(\Lambda(k)-\lambda)}{|B|(\Lambda(k)-\lambda-i\eps)}
    \Psi(x,k)\ov{\Psi(y,k)} g_j(l-k) \,dk \\
    &= \int_\R \frac{\chi(\tau-\lambda)}{\tau-\lambda-i\eps}
    \Big(\int_{F_\tau} \frac{\Psi(x,k)\ov{\Psi(y,k)} g_j(l-k)}{|B||\nabla\Lambda(k)|}
    \,d\mathcal{H}^{d-1}(k)\Big)\,d\tau \\
    &= \int_\R \frac{\chi(\tau-\lambda)}{\tau-\lambda-i\eps}
    \Big(\int_{F_\tau} h_{x,y}(k) \,d\mathcal{H}^{d-1}(k)\Big)\,d\tau 
  \end{align*} 
  where 
  \begin{align*}
    h_{x,y}(k)
    := \frac{\Psi(x,k)\ov{\Psi(y,k)} g_j(l-k)}{|B||\nabla\Lambda(k)|}.
  \end{align*}
  In the third equality above we used the coarea formula.  Assumption~(A2) gives 
  $$
     |h_{x,y}(k)|\leq C|g_j(l-k)|,\qquad |h_{x,y}(k)-h_{x,y}(\tilde k)|\leq C|g_j(l-k)-g_j(l-\tilde k)| 
  $$
  for some $C>0$ and all $x,y\in\R^d,l\in B$ and $j\in\N$.   
  In view of Proposition \ref{prop:Plemelj_formula}~(ii) we may bound the
  expression $U(K^{\eps,j}_{2}(\cdot,y))(x,l)$ by estimating the difference
  \begin{align*}
    \int_{F_{\lambda+t}}  h_{x,y}(k) \,d\mathcal{H}^{d-1}(k) 
    - \int_{F_\lambda}  h_{x,y}(k) \,d\mathcal{H}^{d-1}(k).
  \end{align*}
  
  \medskip
  
  As in the proof of Proposition~\ref{prop:resonant_pointwise_decay} we may content ourselves with proving the
  estimates on pieces of the Fermi surfaces that are parametrized over the first $d-1$ Euclidean coordinates
   according to $k=(z,\phi_t(z))$ for $z$ belonging to some open bounded set $V\subset\R^{d-1}$ where
   $(t,z)\mapsto \phi_t(z)$ is of class $C^N$. In particular, we have $\|\phi_t-\phi_0\|_{C^1(V)}\leq  C|t|$ for all
  $t\in I$.
   So we get 
  \begin{align*}
  &\int_{V} 
    \Big| h_{x,y}(z,\phi_t(z)) (1+|\nabla\phi_t(z)|^2)^{1/2} -
    h_{x,y}(z,\phi_0(z)) (1+|\nabla\phi_0(z)|^2)^{1/2}\Big| \,dz  \\
  &\leq  \int_{V}   \Big( |h_{x,y}(z,\phi_t(z))|
    \big|(1+|\nabla\phi_t(z)|^2)^{1/2}-(1+|\nabla\phi_0(z)|^2)^{1/2}\big|
    \\
    &\qquad + \big|h_{x,y}(z,\phi_t(z))- h_{x,y}(z,\phi_0(z))\big|(1+|\nabla\phi_0(z)|^2)^{1/2}
    \Big) \,dz  \\
  &\leq C \int_V |h_{x,y}(z,\phi_t(z))|  |\nabla\phi_t(z)-\nabla\phi_0(z)| \,dz
    + C\int_V |h_{x,y}(z,\phi_t(z))- h_{x,y}(z,\phi_0(z))|   \,dz \\  
  &\leq C |t| \int_V |g_j(l'-z,l_d-\phi_t(z))| \,dz 
  + C \int_V |g_j(l'-z,l_d-\phi_t(z))- g_j(l'-z,l_d-\phi_0(z))|\,dz   \\
  &\leq C ( 2^{j(1+\delta)} |t|  +  2^{j(1+\delta)} |t|^{\delta/2}) \\
  &\leq  C 2^{j(1+\delta)} |t|^{\delta/2}.
  \end{align*}
  In the second last inequality we used Proposition~\ref{prop:gj}. 
  Proceeding similarly, we get from the same proposition
  $$
    \int_V |h_{x,y}(z,\phi_t(z))|(1+|\nabla\phi_t(z)|^2)^{1/2} \,dz
    \leq C \int_V |g_j(l'-z,l_d-\phi_t(z))| \,dz
    \leq C 2^{j(1+\delta)}.
  $$
  Combining Proposition~\ref{prop:Plemelj_formula}~(ii) and the above estimates we arrive at  
  \begin{align*}
    |U(K^{\eps,j}_{2}(\cdot,y))(x,l)|
    &\leq C \Big| \int_{F_\lambda}  h_{x,y}(k) \,d\mathcal{H}^{d-1}(k) \Big|  \\
    &\;+ C\int_0^\rho \frac{1}{t}\Big|\int_{F_{\lambda+t}} h_{x,y}(k) \,d\mathcal{H}^{d-1}(k) -
     \int_{F_\lambda}  h_{x,y}(k) \,d\mathcal{H}^{d-1}(k)\Big|\,dt \\
    &\leq C 2^{j(1+\delta)}. 
   \end{align*}
   This implies the first of the asserted estimates. The second estimate is proved in the same manner where
   the estimate of Proposition~\ref{prop:Plemelj_formula}~(ii) is replaced by the one from~(i). \qed

\section{Proof of Lemma~\ref{lem:example}} \label{sec:Proof_Lemma}

  In this section we prove Lemma~\ref{lem:example}. To this end we determine a class of nontrivial
  Schr\"odinger operators $L=-\Delta+V(x)$ and frequencies $\lambda\in\sigma_{ess}(L)$ such that
  (A1),(A2),(A3) holds. To check (A1) is a triviality, so we subsequently discuss (A3) and (A2). Concerning
  (A3), we first prove this assumption holds for separable potentials $V\in L^\infty(\R^d)$ given by
  $V(x)=V_1(x_1)+\ldots+V_d(x_d)$ and in particular for all $V$ described in Lemma~\ref{lem:example}. We will use that in this case each eigenpair $(\lambda_s(k),\psi_s(\cdot,k))$ of~\eqref{eq:FB_ev_problem} is given
  by 
  \begin{equation} \label{eq:eigenfunctions_separable}
    \lambda_s(k)=E_{l_1}^{V_1}(k_1)+\ldots+E_{l_d}^{V_d}(k_d),\qquad 
    \psi_s(x,k) = \phi_{l_1}^{V_1}(x_1,k_1)\cdot\ldots\cdot\phi_{l_d}^{V_d}(x_d,k_d)
  \end{equation} 
  for some $l=(l_1,\ldots,l_d)\in\N^d$ where  $(E_{l_j}^{V_j}(k_j),\phi_{l_j}^{V_j}(\cdot,k_j))$ denotes the
  complete set of eigenpairs associated
  with the one-dimensional eigenvalue problems 
  \begin{equation} \label{eq:Floquet_EVproblem_1D}
    - \phi'' + V_j \phi = E \phi \qquad\text{in }(0,1), \quad 
    \phi(1)=e^{ik_j}\phi(0),\;\phi'(0)=e^{ik_j}\phi'(0).
  \end{equation}
  Here, the labeling of the eigenpairs is chosen such that 
  $$
    E_1^{V_j}(k_j)\leq E_2^{V_j}(k_j)\leq  E_3^{V_j}(k_j)\leq \ldots  \qquad\text{for } -\pi\leq k_j\leq \pi
  $$
  in order to keep with the notation from Theorem XIII.89 (see Lemma~\ref{lem:RS_results}) in the book of Reed
  and Simon \cite{RS_analysis_of_operators}. This characterization of the Floquet-Bloch eigenpairs makes it possible to reduce the
  original eigenvalue problem~\eqref{eq:FB_ev_problem} to the one-dimensional
  problems~\eqref{eq:Floquet_EVproblem_1D}. So we start with an equiboundedness result concerning the latter
  problem, which is due to Il'in and Joo \cite{IlinJoo_uniform_estimates}, see also Theorem 2.1 in
  \cite{Kom_unif_bounded} for a short proof of this result.
  
  \begin{prop}[Teorema 1, \cite{IlinJoo_uniform_estimates}] \label{prop:IlinJoo}
    Let $q\in L^1([a,b])$ for $a,b\in\R$, $a<b$. Then there is a $C>0$ such that all solutions
    $u\in W^{2,1}([a,b])$ of 
    $$
      -u''+qu = \lambda u \quad\text{in }(a,b)
    $$
    with $\lambda\geq 0$ satisfy $\|u\|_{L^\infty([a,b])}\leq C \|u\|_{L^2([a,b])}$.
  \end{prop}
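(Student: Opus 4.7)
The plan is to split into two regimes according to the size of $\lambda$, with a threshold $\lambda_0 > 0$ to be chosen later.

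For large $\lambda \geq \lambda_0$, I would use a Pr\"{u}fer-type substitution $u(x) = R(x)\sin\theta(x)$, $u'(x) = \sqrt{\lambda}\, R(x)\cos\theta(x)$, which is well-defined whenever $u\not\equiv 0$ by uniqueness of the Cauchy problem. A direct calculation from $-u'' + qu = \lambda u$ gives the ODE system
\begin{equation*}
   \theta' = \sqrt{\lambda} - \frac{q}{\sqrt{\lambda}}\sin^2\theta,
   \qquad (\log R)' = \frac{q\sin(2\theta)}{2\sqrt{\lambda}}.
\end{equation*}
Integrating the second equation yields $|\log R(x) - \log R(a)| \leq \|q\|_{L^1([a,b])}/(2\sqrt{\lambda})$, so $R(x)$ and $R(a)$ are comparable up to a constant depending only on $\|q\|_{L^1}$, uniformly in $x\in[a,b]$ and $\lambda\geq 1$. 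Integrating the first equation gives $\theta(x) = \sqrt{\lambda}(x-a) + \theta(a) + \epsilon(x)$ with $|\epsilon(x)| \leq \|q\|_{L^1}/\sqrt{\lambda}$. Comparing $\sin^2\theta(x)$ with the pure oscillation $\sin^2(\sqrt{\lambda}(x-a)+\theta(a))$ using $|\sin^2\alpha - \sin^2\beta|\leq 2|\alpha-\beta|$, and then integrating the latter explicitly, yields
\begin{equation*}
   \int_a^b \sin^2\theta(x)\,dx = \frac{b-a}{2} + O(\lambda^{-1/2}),
\end{equation*}
which is bounded below by $(b-a)/4$ for $\lambda$ beyond a threshold $\lambda_0=\lambda_0(\|q\|_{L^1},b-a)$. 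Since $u^2 + (u')^2/\lambda = R^2$, combining the $R$-comparability with this oscillation bound gives
\begin{equation*}
   \|u\|_{L^\infty([a,b])}^2 \leq \sup_x R(x)^2 \leq C\, R(a)^2 \leq C'\int_a^b R^2\sin^2\theta\,dx = C'\|u\|_{L^2([a,b])}^2.
\end{equation*}

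For the remaining bounded range $0\leq \lambda\leq \lambda_0$, I would argue by contradiction and compactness. Suppose the estimate failed on this range; then there would exist $\lambda_n\in [0,\lambda_0]$ and nontrivial solutions $u_n$ of $-u_n'' + qu_n=\lambda_n u_n$ with $\|u_n\|_{L^\infty}=1$ and $\|u_n\|_{L^2}\to 0$. From the ODE, $\|u_n''\|_{L^1}\leq \|q\|_{L^1} + \lambda_0(b-a)$; by the mean value theorem there is $\xi_n\in(a,b)$ with $|u_n'(\xi_n)|\leq 2/(b-a)$, so $u_n'(x)=u_n'(\xi_n)+\int_{\xi_n}^x u_n''$ gives a uniform bound on $\|u_n'\|_{L^\infty}$. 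Arzel\`{a}--Ascoli then furnishes a uniformly convergent subsequence $u_n\to u_\infty\in C([a,b])$ with $\|u_\infty\|_{L^\infty}=1$; but uniform convergence together with $\|u_n\|_{L^2}\to 0$ forces $u_\infty\equiv 0$, a contradiction.

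The main obstacle is the uniform oscillation lower bound $\int_a^b \sin^2\theta\,dx\geq c>0$ in the Pr\"{u}fer step. Because $q$ is only assumed to lie in $L^1$, the phase $\theta$ is barely $C^1$ and one cannot invoke stationary phase arguments requiring smoothness of $q$; the estimate must exploit only the $L^1$-smallness of the phase perturbation $\epsilon(x)$ and the purely linear leading behaviour $\sqrt{\lambda}(x-a)$ to recover the average $(b-a)/2$ of $\sin^2$ over a long oscillation interval.
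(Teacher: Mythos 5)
Your argument is correct. Note first that the paper itself contains no proof of this proposition: it is imported verbatim as Teorema~1 of Il'in and Jo\'o, with a pointer to Theorem~2.1 of Komornik for a short proof, so there is no in-paper argument to compare against; what you have written is a self-contained substitute. The Pr\"ufer step is sound: the identities $\theta'=\sqrt{\lambda}-q\sin^2\theta/\sqrt{\lambda}$ and $(\log R)'=q\sin(2\theta)/(2\sqrt{\lambda})$ are exactly what the substitution gives, they hold a.e.\ with $R,\theta$ absolutely continuous because $q\in L^1$ and $u\in W^{2,1}\subset C^1$ (and $R>0$ everywhere for $u\not\equiv 0$ by Carath\'eodory uniqueness, as you say), and the chain $\|u\|_{L^\infty}^2\le\sup R^2\le e^{2\|q\|_{L^1}}\inf R^2\le \tfrac{4}{b-a}e^{2\|q\|_{L^1}}\int_a^b R^2\sin^2\theta\,dx$ closes once $\int_a^b\sin^2\theta\,dx\ge (b-a)/4$, which your phase comparison delivers for $\lambda\ge\lambda_0(\|q\|_{L^1},b-a)$. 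The compactness step on $[0,\lambda_0]$ is also fine: the bound $\|u_n'\|_{L^\infty}\le 2/(b-a)+\|q\|_{L^1}+\lambda_0(b-a)$ uses only $\|u_n\|_{L^\infty}=1$ and the equation, and you never need to pass to the limit in the ODE. Two minor remarks: (i) the Pr\"ufer substitution presupposes $u$ real-valued, whereas the paper applies the proposition to the complex quasiperiodic eigenfunctions $\phi^{V_j}_{l_j}(\cdot,k_j)$; this costs only a factor $2$ since real and imaginary parts solve the same real equation. (ii) The published short proofs typically run instead through the variation-of-constants identity $u(x)=u(x_0)\cos(\sqrt{\lambda}(x-x_0))+\lambda^{-1/2}u'(x_0)\sin(\sqrt{\lambda}(x-x_0))+\lambda^{-1/2}\int_{x_0}^x\sin(\sqrt{\lambda}(x-t))q(t)u(t)\,dt$ together with Gronwall and an averaging of the energy $u^2+(u')^2/\lambda$; your route is equivalent in substance (almost-conservation of the same energy plus the fact that $\sin^2$ averages to $1/2$ over a long oscillation), and if anything slightly cleaner, at the price of a non-explicit constant from the compactness step on the bounded range of $\lambda$.
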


  Using~\eqref{eq:eigenfunctions_separable} this result carries over to the
  Floquet-Bloch eigenfunctions associated with bounded separable potentials.

\begin{lem}\label{lem:(A3)}
  Let $V\in L^\infty(\R^d)$ be a separable $\Z^d$-periodic potential and $(\psi_s(\cdot,k))_{s\in\Z^d}$ an
   $L^2(\Omega;\C)$-orthonormal basis consisting of Floquet-Bloch eigenfunctions associated
   with~\eqref{eq:FB_ev_problem}. Then there is a positive $C>0$ such that 
   $$
    \|\psi_s(\cdot,k)\|_{L^\infty(\Omega;\C)} \leq C \qquad\text{for all }s\in\Z^d, k\in B.
  $$
\end{lem}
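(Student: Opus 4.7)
The plan is to exploit separability to reduce the claim to $d$ one-dimensional estimates, and then to apply the uniform $L^\infty$-bound of Il'in--Joo (Proposition~\ref{prop:IlinJoo}) to each factor.

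First I would fix, for each $j\in\{1,\ldots,d\}$ and each $k_j\in[-\pi,\pi]$, an $L^2([0,1];\C)$-orthonormal basis $\{\phi_l^{V_j}(\cdot,k_j):l\in\N\}$ of eigenfunctions of the one-dimensional problem~\eqref{eq:Floquet_EVproblem_1D} with eigenvalues $E_l^{V_j}(k_j)$. A standard tensor-product spectral argument shows that the products in~\eqref{eq:eigenfunctions_separable} form an orthonormal basis of $L^2(\Omega;\C)$ consisting of Floquet-Bloch eigenfunctions of~\eqref{eq:FB_ev_problem} with eigenvalues $\lambda_s(k)=\sum_j E_{l_j}^{V_j}(k_j)$; upon relabeling $(l_1,\ldots,l_d)\in\N^d$ by $s\in\Z^d$ this yields an ONB of the form~\eqref{eq:eigenfunctions_separable} to which the lemma applies, and it suffices to prove the estimate for this choice.

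Next I would apply Proposition~\ref{prop:IlinJoo} coordinate-wise. Since $V_j\in L^\infty(\R)$, the operator $-\partial^2+V_j$ is bounded from below, say by $-M_j$, so $E_l^{V_j}(k_j)+M_j\geq 0$ uniformly in $l$ and $k_j$. Replacing $V_j$ by $V_j+M_j$ shifts eigenvalues by $M_j$ but leaves eigenfunctions unchanged, so without loss of generality one may assume $E_l^{V_j}(k_j)\geq 0$. Proposition~\ref{prop:IlinJoo} applied to the ODE $-(\phi_l^{V_j})''+V_j\phi_l^{V_j}=E_l^{V_j}(k_j)\phi_l^{V_j}$ on $[0,1]$ then yields a constant $C_j>0$ depending only on $V_j$ with
\[
  \|\phi_l^{V_j}(\cdot,k_j)\|_{L^\infty([0,1];\C)}\leq C_j\|\phi_l^{V_j}(\cdot,k_j)\|_{L^2([0,1];\C)}=C_j
\]
uniformly in $l\in\N$ and $k_j\in[-\pi,\pi]$.

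Multiplying these factor-wise bounds through the product representation~\eqref{eq:eigenfunctions_separable} gives the desired uniform estimate $\|\psi_s(\cdot,k)\|_{L^\infty(\Omega;\C)}\leq \prod_{j=1}^d C_j$ for all $s\in\Z^d$ and $k\in B$. I expect no real obstacle beyond verifying the applicability of Proposition~\ref{prop:IlinJoo} (via the non-negativity shift) and correctly packaging the tensor-product structure; the Il'in--Joo estimate itself does the genuine work.
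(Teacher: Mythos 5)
Your proposal is correct and follows essentially the same route as the paper: factor the eigenfunctions via \eqref{eq:eigenfunctions_separable} and apply Proposition~\ref{prop:IlinJoo} to each one-dimensional factor. The only (cosmetic) difference is how the restriction $\lambda\geq 0$ in Proposition~\ref{prop:IlinJoo} is handled --- you shift the potential by a lower bound $M_j$, whereas the paper simply remarks that only finitely many eigenvalues are negative; your shift argument is, if anything, the cleaner of the two.
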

\begin{proof}
  By Proposition~\ref{prop:IlinJoo} there are $C_1,\ldots,C_d>0$ such that
  \begin{align*}
    \|\psi_s(\cdot,k)\|_{L^\infty(\Omega;\C)}
    &\stackrel{\eqref{eq:eigenfunctions_separable}}{=} \|\phi_{l_1}^{V_1}(\cdot,k_1)\|_{L^\infty([0,1];\C)}\cdot
     \ldots\cdot\|\phi_{l_d}^{V_d}(\cdot,k_d)\|_{L^\infty([0,1];\C)}  \\
    &\leq C_1 \|\phi_{l_1}^{V_1}(\cdot,k_1)\|_{L^2([0,1];\C)}\cdot
      \ldots\cdot C_d\|\phi_{l_d}^{V_d}(\cdot,k_d)\|_{L^2([0,1];\C)} \\
    &\stackrel{\eqref{eq:eigenfunctions_separable}}{=} C \|\psi_s(\cdot,k)\|_{L^2(\Omega;\C)} \\
    &= C.
  \end{align*}
  Notice that the restriction $\lambda\geq 0$ from Proposition~\ref{prop:IlinJoo} is in fact not needed since
  only finitely many eigenfunctions associated with negative eigenvalues exist.  
\end{proof}
 
 We remark that for other general elliptic eigenvalue problems one can not expect the equiboundedness of
 the corresponding eigenfunctions. For instance, it is pointed out in Example~2.7 in~\cite{Kom_unif_bounded}
 that the radially symmetric eigenfunctions of the Laplacian on a three-dimensional ball associated with homogeneous
 Dirichlet boundary conditions are not equibounded in $L^\infty$. Having thus found a criterion for
 assumption (A3) we now discuss~(A2). We make use of the following result about the band
 structure of one-dimensional Schr\"odinger operators with periodic potentials:
  
  \begin{lem}[see Theorem XIII.89 \cite{RS_analysis_of_operators}] \label{lem:RS_results}
    For piecewise continuous and $1$-periodic potentials $V_i:\R\to\R$, $i=1,\ldots,d$ and eigenvalue
    functions $(E_{l_i}^{V_i})_{l_i\in\N}$ as above the following holds:
    \begin{itemize}
      \item[(i)] The functions $k_i\mapsto E_{l_i}^{V_i}(k_i)$ are symmetric about $0$ and
      analytic on $(-\pi,0)\cup (0,\pi)$.
      \item[(ii)] The functions $k_i\mapsto \phi_{l_i}^{V_i}(\cdot,k_i)\in L^2([0,1];\C)$ are
      continuous on $[-\pi,\pi]$ and analytic on $(-\pi,0)\cup (0,\pi)$.
      \item[(iii)] For odd (respectively even) $l_i\ni\N$ the function $E_{l_i}^{V_i}$ is strictly increasing
      (respectively decreasing) from $0$ to $\pi$ with 
      \begin{align*}
        E_1^{V_i}(0) < E_1^{V_i}(\pi)\leq E_2^{V_i}(\pi)<E_2^{V_i}(0)\leq 
        E_3^{V_i}(0)< E_3^{V_i}(\pi)\leq  E_4^{V_i}(\pi)<E_4^{V_i}(0)\leq \ldots
      \end{align*}
    \end{itemize}
  \end{lem}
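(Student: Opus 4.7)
\medskip
\noindent \emph{Proof sketch.} The plan is to verify (A1), (A2), (A3) separately. Assumption (A1) is immediate since $L = -\Delta + V_1(x_1) + V_2(x_2)$ is uniformly elliptic with bounded $\Z^2$-periodic potential. Assumption (A3) is a direct consequence of Lemma~\ref{lem:(A3)} applied to the separable bounded potential $V_1 + V_2$, which uses Il'in--Joo's estimate (Proposition~\ref{prop:IlinJoo}) together with the tensor-product structure~\eqref{eq:eigenfunctions_separable}. The bulk of the proof is (A2).

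For the unperturbed reference $V_i \equiv \mu_i$, formulas~\eqref{eq:eigenpairs_constantpotential} give the globally analytic Bloch function $\Lambda_0(k) = |k|^2 + \mu_1 + \mu_2$, whose Fermi surface $F_\lambda^{(0)} = \{|k|=r_\lambda\}$ with $r_\lambda := \sqrt{\lambda - \mu_1 - \mu_2}$ satisfies $r_\lambda \in (\sqrt\eps,\sqrt{\pi^2-\eps})$ for $\lambda$ in the stated interval. I would fix once and for all a compact open annulus $\ov U \subset (-\pi,\pi)^2 \setminus \{0\}$ that contains every such $F_\lambda^{(0)}$ in its interior and whose projection $J$ onto either coordinate axis is a compact subinterval of $(-\pi,\pi)$. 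For constant potential the one-dimensional first and second bands $E_1^{\mu_i}(k_i) = k_i^2 + \mu_i$ and $E_2^{\mu_i}(k_i) = (2\pi-|k_i|)^2 + \mu_i$ are then separated on $J$ by a uniform spectral gap $\gamma_0 > 0$, thanks to the upper bound $r_\lambda < \sqrt{\pi^2-\eps}$.

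The perturbed problem is handled by Kato's analytic perturbation theory applied to the Floquet--Bloch operator family $H^{V_i}(k_i) := -(\partial_x - ik_i)^2 + V_i$ on $L^2([0,1])$ with periodic boundary conditions. This is an entire analytic family of type (A) in $k_i$, and for $\|V_i - \mu_i\|_{L^\infty([0,1])} < \delta$ with $\delta$ much smaller than $\gamma_0$ the lowest eigenvalue $E_1^{V_i}(k_i)$ remains simple and isolated from the remainder of the spectrum for every $k_i \in J$. Hence the eigenpair $(E_1^{V_i}(k_i), \phi_1^{V_i}(\cdot,k_i))$ depends real-analytically on $k_i \in J$, with normalized eigenfunction uniformly bounded in $L^\infty([0,1])$ by Lemma~\ref{lem:(A3)}, and with convergence to the unperturbed pair in any $C^M$-topology as $\delta \to 0$. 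Via~\eqref{eq:eigenfunctions_separable} these one-dimensional objects combine into
\[
  \Lambda(k) = E_1^{V_1}(k_1) + E_1^{V_2}(k_2), \qquad
  \Psi(x,k) = \phi_1^{V_1}(x_1, k_1)\,\phi_1^{V_2}(x_2, k_2), \qquad k \in \ov U,
\]
which are analytic in $k$ on $\ov U$ and satisfy $\Lambda \in C^{N,\beta}(\ov U)$ and $\Psi \in L^\infty(\Omega, C^{N-1,\beta}(\ov U))$ for any admissible $N,\beta$, so that (A2)(a) holds.

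For (A2)(b), at $\delta = 0$ the gradient $\nabla \Lambda_0(k) = 2k$ is nonvanishing on $\ov U$ since $\ov U$ avoids the origin, and $F_\lambda^{(0)}$ has constant positive Gaussian curvature $r_\lambda^{-1}$. The $C^M$-convergence $\Lambda \to \Lambda_0$ on $\ov U$ obtained above then yields, via the implicit function theorem, that for $\delta$ small enough $F_\lambda \cap \ov U = \{k \in \ov U : \Lambda(k) = \lambda\}$ is a smooth compact embedded closed curve $C^M$-close to $F_\lambda^{(0)}$, and its Gaussian curvature (a continuous function of the first two derivatives of $\Lambda$ along the curve and of $|\nabla \Lambda|^{-1}$) remains strictly positive. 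The smallness requirement on $\delta$ can be taken uniform in $\lambda$ over the compact interval $[\mu_1+\mu_2+\eps, \mu_1+\mu_2+\pi^2-\eps]$ since all quantitative estimates depend only on $U$ and on $\|V_i-\mu_i\|_\infty$, not on $\lambda$. The main technical obstacle is that Lemma~\ref{lem:RS_results}(i) alone does not guarantee analyticity of $E_1^{V_i}$ at the interior value $k_i = 0$, which lies in $J$; Kato's theory bypasses this by treating $k_i = 0$ as an ordinary point of the analytic family $H^{V_i}(k_i)$, with the spectral gap of $4\pi^2$ there easily dominating the perturbation for small $\delta$. \qed
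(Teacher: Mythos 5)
Your proposal does not prove the statement at hand. The statement is Lemma~\ref{lem:RS_results}: the classical one-dimensional band-structure facts for Hill's operator $-\phi''+V_i\phi$, namely the evenness of $k_i\mapsto E_{l_i}^{V_i}(k_i)$, the analyticity of the band functions and of the Bloch eigenfunctions on $(-\pi,0)\cup(0,\pi)$, the strict monotonicity on $(0,\pi)$ alternating with the parity of $l_i$, and the interlacing of the band edges $E_1^{V_i}(0)<E_1^{V_i}(\pi)\le E_2^{V_i}(\pi)<E_2^{V_i}(0)\le E_3^{V_i}(0)<\ldots$. What you prove instead is (a variant of) Lemma~\ref{lem:example} together with Theorem~\ref{thm:A2condition}: the verification of (A1),(A2),(A3) for the two-dimensional separable operator $-\Delta+V_1(x_1)+V_2(x_2)$ with $V_i$ close to constants, via the Il'in--Joo bound, Kato perturbation theory and an implicit-function argument for the perturbed Fermi curve. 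None of the assertions (i)--(iii) is addressed: you never establish the symmetry in $k_i$, the analyticity away from $k_i=0,\pm\pi$ for \emph{all} bands $l_i\in\N$ (your Kato argument treats only the lowest band, only for $V_i$ near a constant, and only on a compact subinterval of $(-\pi,\pi)$, whereas the lemma is a claim for arbitrary piecewise continuous $1$-periodic potentials and all bands), nor the monotonicity and band-edge inequalities, which are the substantive content of part (iii).

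For comparison, the paper does not reprove these facts either: it cites Theorem XIII.89 of \cite{RS_analysis_of_operators} and merely remarks that the $2\pi$-periodic formulation carries over to $1$-periodic potentials by rescaling. If you wanted a self-contained proof, the standard route is through Hill's discriminant $D(E)$ (the trace of the monodromy matrix of $-u''+V_iu=Eu$): the band functions are characterized by $D(E_{l_i}^{V_i}(k_i))=2\cos k_i$, evenness in $k_i$ is immediate from this relation (or from conjugating the quasiperiodic problem, which exchanges $k_i$ and $-k_i$), analyticity on $(-\pi,0)\cup(0,\pi)$ follows from the implicit function theorem because $D'(E)\neq 0$ in the interior of each band, and the strict monotonicity together with the interlacing of $E_l^{V_i}(0)$ and $E_l^{V_i}(\pi)$ reflects the qualitative behaviour of $D$, which crosses the strip $[-2,2]$ monotonically on each band, alternately downwards and upwards, with $|D(E)|=2$ exactly at the periodic and antiperiodic eigenvalues. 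A perturbative argument around constant potentials cannot yield the lemma, since the lemma is not a small-perturbation statement.
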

  
  Notice that Theorem~XIII.89~\cite{RS_analysis_of_operators} is formulated for $2\pi$-periodic
  potentials, but the result is the same for $1-$periodic potentials by rescaling. We refer to Figure
  XIII.13~\cite{RS_analysis_of_operators} for an illustration of the situation.
  Let us comment on two subtleties. The first is that the band functions $E_{l_i}^{V_i}$ are even strictly
  monotone on $[0,\pi]$ in the sense that their derivatives only vanish at~$0$ or $\pi$. The second
  is that the possibly weaker regularity at $-\pi,0,\pi$ results from eventual intersections
  of the bands at these points of the Brillouin zone, i.e. when $E_j(\pi)=E_{j+1}(\pi)$ for odd $j$ or
  $E_j(0)=E_{j+1}(0)$ for some even $j$. As explained in the Introduction, this regularity issue is
  linked to the ordering of the eigenfunctions. Given that the interior of the first band does not intersect
  the other bands (see the first inequalities in part (iii) of the lemma), we may sharpen the statement about
  $E_1^{V_i}$ in $(-\pi,\pi)$ as follows:
  
  \begin{lem}\label{lem:RS_results improved}
    Under the assumptions of Lemma~\ref{lem:RS_results} the functions
    $k_i\mapsto E_1^{V_i}(k_i)$ and $k_i\mapsto \phi_1^{V_i}(\cdot,k_i)\in H^2([0,1];\C)$ are
    analytic on $(-\pi,\pi)$ with $(E_1^{V_i})'>0$ on $(0,\pi)$.
  \end{lem}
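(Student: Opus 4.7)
The novel content relative to Lemma~\ref{lem:RS_results} is twofold: extending analyticity of the first eigenpair across the single interior point $k_i=0$ (Reed--Simon already supply it on $(-\pi,0)\cup(0,\pi)$), and upgrading strict monotonicity of $E_1^{V_i}$ on $[0,\pi]$ to strict positivity of $(E_1^{V_i})'$ on the open interval $(0,\pi)$. The essential input for the first point is that $E_1^{V_i}(0)$ is a \emph{simple} eigenvalue of the periodic problem: Lemma~\ref{lem:RS_results}(iii) yields the chain $E_1^{V_i}(0)<E_1^{V_i}(\pi)\leq E_2^{V_i}(\pi)<E_2^{V_i}(0)$, so in particular $E_1^{V_i}(0)<E_2^{V_i}(0)$, and by continuity the lowest eigenvalue remains simple and isolated in a neighbourhood of $k_i=0$.

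To exploit this I would apply Kato's analytic perturbation theory to a $k_i$-analytic family of operators defined on a $k_i$-independent domain. Since the quasiperiodic boundary conditions in \eqref{eq:Floquet_EVproblem_1D} themselves depend on $k_i$, I first pass to the unitarily equivalent problem obtained by the gauge transformation $\phi(x)=e^{ik_ix}\tilde\phi(x)$, which replaces \eqref{eq:Floquet_EVproblem_1D} by
\[
  T(k_i)\tilde\phi := (-i\partial_x+k_i)^2\tilde\phi+V_i(x)\tilde\phi = E\,\tilde\phi\quad\text{on }(0,1),
\]
together with honest \emph{periodic} boundary conditions, and $(T(k_i))_{k_i\in\C}$ is then a selfadjoint holomorphic family of type (A) on $H^2_{\mathrm{per}}([0,1];\C)$ because the coefficients depend polynomially on $k_i$. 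Since $E_1^{V_i}(0)$ is a simple isolated eigenvalue of $T(0)$, Kato's theorem provides a complex neighbourhood $U$ of $0$ and holomorphic maps $k_i\mapsto\lambda(k_i)\in\C$, $k_i\mapsto\tilde\varphi(k_i)\in H^2_{\mathrm{per}}([0,1];\C)$ with $T(k_i)\tilde\varphi(k_i)=\lambda(k_i)\tilde\varphi(k_i)$ and $\lambda(0)=E_1^{V_i}(0)$. Undoing the gauge, $\varphi(k_i)(x):=e^{ik_ix}\tilde\varphi(k_i)(x)$ is an analytic family solving \eqref{eq:Floquet_EVproblem_1D}; by continuity of the spectrum and simplicity, $\lambda(k_i)=E_1^{V_i}(k_i)$ and $\varphi(k_i)$ is a scalar multiple of $\phi_1^{V_i}(\cdot,k_i)$ near $0$. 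Matching phases with the real-analytic branches already supplied by Lemma~\ref{lem:RS_results} on $(-\pi,0)$ and $(0,\pi)$ yields analyticity on all of $(-\pi,\pi)$; the passage from $L^2$- to $H^2$-analyticity of the eigenfunctions is automatic from the eigenvalue equation and $V_i\in L^\infty([0,1])$.

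For the second claim, recall from the discussion preceding the lemma (cf.~Theorem XIII.89 in \cite{RS_analysis_of_operators}) that on $[0,\pi]$ the derivative of each band function vanishes only at the endpoints. Since $E_1^{V_i}$ is strictly increasing on $[0,\pi]$ by Lemma~\ref{lem:RS_results}(iii), the now-analytic derivative $(E_1^{V_i})'$ is nonnegative on $[0,\pi]$ and vanishes only within $\{0,\pi\}$, hence is strictly positive on $(0,\pi)$. The main obstacle in the plan is setup-theoretic rather than analytical: one must realise \eqref{eq:Floquet_EVproblem_1D} as an honest analytic family on a fixed domain, which is exactly what the gauge transformation above accomplishes before Kato's theorem becomes applicable.
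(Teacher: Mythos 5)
Your treatment of the analyticity claim is correct and takes a genuinely different route from the paper. You gauge away the $k_i$-dependence of the boundary conditions and invoke Kato's perturbation theory for the type-(A) holomorphic family $T(k_i)=(-i\partial_x+k_i)^2+V_i$ on $H^2_{\mathrm{per}}$, using the simplicity of $E_1^{V_i}(0)$ (which indeed follows from the chain of inequalities in Lemma~\ref{lem:RS_results}(iii)) to continue the first eigenpair analytically across $k_i=0$. The paper instead observes that the proof of Theorem~XIII.89(d),(f) in \cite{RS_analysis_of_operators} already yields analyticity of $E_1^{V_i}$ on all of $(-\pi,\pi)$ (via the Hill discriminant), and then bootstraps the eigenfunction from $L^2$- to $H^2$-analyticity through the ODE, exactly as you do. Both arguments are sound; yours is self-contained at the price of the gauge-transformation setup, the paper's is shorter but leans on the reference.

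The second claim is where your argument has a genuine gap. You justify $(E_1^{V_i})'>0$ on $(0,\pi)$ by ``recalling from the discussion preceding the lemma'' that the derivative of each band function vanishes only at the endpoints. But that remark in the paper is an unproven aside whose justification is precisely what this lemma's proof is meant to supply, and Lemma~\ref{lem:RS_results}(iii) itself only gives strict monotonicity on $[0,\pi]$ --- which does not imply a nonvanishing derivative at interior points, even for analytic functions ($k\mapsto k^3$ is strictly increasing and analytic with vanishing derivative at $0$). So as written your argument is circular at the decisive step. The paper closes this by the Hill-discriminant identity $D(E_1^{V_i}(k_i))=2\cos(k_i)$ from p.~296 of \cite{RS_analysis_of_operators}: differentiating gives $D'(E_1^{V_i}(k_i))\,(E_1^{V_i})'(k_i)=-2\sin(k_i)\neq 0$ for $k_i\in(0,\pi)$, hence $(E_1^{V_i})'$ cannot vanish there, and monotonicity fixes the sign. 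You need to insert this (or an equivalent quantitative input, e.g.\ the transfer-matrix argument behind it); neither Kato's theory nor the Feynman--Hellmann formula for $\lambda'(k_i)$ gives the nonvanishing for free.
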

  \begin{proof}
    The proof of Theorem~XIII.89~(d),(f) on p.294 \cite{RS_analysis_of_operators} not only yields part (i)
    of Lemma~\ref{lem:RS_results}, but also proves the analyticity of $E_1^{V_i}$ on $(-\pi,\pi)$. From this,
    one deduces the analyticity of $k_i\mapsto \phi_1^{V_i}(\cdot,k_i) \in L^2([0,1];\C)$ and exploiting the
    ODE~\eqref{eq:Floquet_EVproblem_1D} this entails the analyticity of $k_i\mapsto \phi_1^{V_i}(\cdot,k_i)
    \in H^2([0,1];\C)$. Moreover, the computations from p.296 in~\cite{RS_analysis_of_operators} reveal the
    formula $$
      D(E_1^{V_i}(k_i)) = 2\cos(k_i) 
    $$
    where $D\in C^2(\R)$ is the Hill discriminant defined on p.296~\cite{RS_analysis_of_operators}.
    Differentiating this identity with respect to $k_i$ shows that the derivative of $E_1^{V_i}$ does
    not vanish on $(0,\pi)$. So the monotonicity statement from
    Lemma~\ref{lem:RS_results} actually implies $(E_1^{V_i})'>0$ on $(0,\pi)$.
  \end{proof}
  
   We now use this lemma in order to verify (A2) for a class of two-dimensional
   Schr\"odinger operators and certain sufficiently low frequencies $\lambda$ in the spectrum.
     
  \begin{thm}\label{thm:A2condition}
    Let $V(x):=V_1(x_1)+V_2(x_2)$ for piecewise continuous and $1$-periodic functions
    $V_1,V_2:\R\to\R$ such that 
     $(E_1^{V_1})''(k_1),(E_1^{V_2})''(k_2)$ are positive for $k=(k_1,k_2)\in F_\lambda$ 
     where
    \begin{equation} \label{eq:A2_condition}
       E_1^{V_1}(0)+E_1^{V_2}(0)
      < \lambda
      < \min\big\{ E_1^{V_1}(0)+E_1^{V_2}(\pi), E_1^{V_1}(\pi)+E_1^{V_2}(0)\big\}.
    \end{equation}
    Then $L:=-\Delta+V_1(x_1)+V_2(x_2)$ satisfies assumption (A2) at
    the frequency $\lambda$.
  \end{thm}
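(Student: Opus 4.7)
The approach is to exploit the separability of $V$. Tensor-product completeness of the one-dimensional Floquet-Bloch bases shows that the Floquet-Bloch eigenpairs on $\Omega$ for~\eqref{eq:FB_ev_problem} are precisely
$$
  \big(E_{l_1}^{V_1}(k_1)+E_{l_2}^{V_2}(k_2),\,\phi_{l_1}^{V_1}(x_1,k_1)\phi_{l_2}^{V_2}(x_2,k_2)\big)_{(l_1,l_2)\in\N^2}.
$$
I label the orthonormal basis by $\Z^2$ so that $s=0$ corresponds to $(l_1,l_2)=(1,1)$; then the extensions from~\eqref{eq:defn_LambdaPsi} satisfy $\Lambda|_B(k)=E_1^{V_1}(k_1)+E_1^{V_2}(k_2)$ and $\Psi|_{\Omega\times B}(x,k)=\phi_1^{V_1}(x_1,k_1)\phi_1^{V_2}(x_2,k_2)$.

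The first main step is to confine $F_\lambda$ to the interior of the Brillouin zone. For any $(l_1,l_2)\neq(1,1)$, without loss of generality $l_1\geq 2$, Lemma~\ref{lem:RS_results}(iii) gives $E_{l_1}^{V_1}(k_1)\geq E_1^{V_1}(\pi)$ on $[-\pi,\pi]$, so $E_{l_1}^{V_1}(k_1)+E_{l_2}^{V_2}(k_2)\geq E_1^{V_1}(\pi)+E_1^{V_2}(0)>\lambda$; hence only the $(1,1)$-band contributes to $F_\lambda$. The same bound rules out $k_i=\pm\pi$ on $F_\lambda$ (which would force $E_1^{V_j}(k_j)<E_1^{V_j}(0)$ for the other index $j$). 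I then pick an open bounded $U$ with $F_\lambda\subset U\subset\bar U\subset(-\pi,\pi)^2$, on which the tensor-product expressions for $\Lambda$ and $\Psi$ remain valid.

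Assumption (A2)(a) is then immediate from Lemma~\ref{lem:RS_results improved}: $E_1^{V_i}$ is real-analytic on $(-\pi,\pi)$, so $\Lambda$ is real-analytic on $\bar U$; and $k_i\mapsto\phi_1^{V_i}(\cdot,k_i)\in H^2([0,1];\C)$ is analytic, which combined with the Sobolev embedding $H^2([0,1])\hookrightarrow L^\infty([0,1])$ yields uniform-in-$x$ bounds on all $k$-derivatives of $\Psi(x,\cdot)$, so $\Psi\in L^\infty(\Omega;C^{N-1,\beta}(\bar U))$. For (A2)(b), $F_\lambda$ is closed and contained in $(-\pi,\pi)^2$, hence compact; the gradient $\nabla\Lambda(k)=((E_1^{V_1})'(k_1),(E_1^{V_2})'(k_2))$ vanishes only at $k=0$ (by the lemma and the evenness of $E_1^{V_i}$ about $0$), but $\Lambda(0)=E_1^{V_1}(0)+E_1^{V_2}(0)<\lambda$ excludes $0\in F_\lambda$, so $F_\lambda$ is a regular hypersurface. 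Finally, separability forces $D^2\Lambda(k)=\diag((E_1^{V_1})''(k_1),(E_1^{V_2})''(k_2))$, which is positive definite on $F_\lambda$ by hypothesis; thus $\Lambda$ is strictly convex in a neighborhood of $F_\lambda$, making $F_\lambda$ the boundary of a locally strictly convex sublevel set and hence of positive Gaussian curvature.

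The main technical obstacle is the bookkeeping to confirm that the chosen labeling produces the extended $\Lambda$ with the desired separable form on a neighborhood of $F_\lambda$ (so that the other bands $B+2\pi s$, $s\neq 0$, do not interfere). This is enabled entirely by the two-sided bound on $\lambda$ in~\eqref{eq:A2_condition}, which forces $F_\lambda\subset(-\pi,\pi)^2$.
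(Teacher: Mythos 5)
Your proposal is correct and follows essentially the same route as the paper: the tensor-product identification of the Floquet--Bloch eigenpairs, the confinement of $F_\lambda$ to the interior of the Brillouin zone via the two-sided condition~\eqref{eq:A2_condition} (which forces $l_1=l_2=1$ and excludes $k_i=\pm\pi$), regularity of $\Lambda,\Psi$ from Lemma~\ref{lem:RS_results improved}, and non-vanishing of $\nabla\Lambda$ on $F_\lambda$ since the gradient only vanishes at the origin, which lies strictly below level $\lambda$. The only cosmetic difference is that you deduce positive curvature from positive definiteness of $D^2\Lambda$ (local strict convexity of the sublevel set), whereas the paper evaluates the explicit level-set curvature formula $\kappa=\big((E_1^{V_1})''(k_1)(E_1^{V_2})'(k_2)^2+(E_1^{V_2})''(k_2)(E_1^{V_1})'(k_1)^2\big)/|\nabla\Lambda(k)|^{3}$; these are the same computation.
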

  \begin{proof}
      We define for $s\in\Z^2$
      $$ 
        \psi_s(x,k)  := \phi^{V_1}_{l_1(s)}(x_1,k_1) \phi^{V_2}_{l_2(s)}(x_2,k_2),\qquad  
       \lambda_s(k) := E_{l_1(s)}^{V_1}(k_1)+E_{l_2(s)}^{V_2}(k_2)  
      $$  
      such that $\Z^2\ni s\mapsto (l_1(s),l_2(s))\in\N\times\N$ is a bijection with $l_1(0,0)=l_2(0,0)=1$.
      From \eqref{eq:defn_LambdaPsi} we recall that the functions $\Lambda,\Psi$ then satisfy
      $\Lambda(k+2\pi s)=\lambda_s(k)$ and $\Psi(x,k+2\pi s)=\psi_s(x,k)$ whenever $x\in\Omega,k\in
      B,s\in\Z^d$.

    \medskip 
    
    First we show that the Fermi surface $F_\lambda=\{\lambda\in\R^d: \Lambda(k)=\lambda\}$ is contained in
    the interior of the Brillouin zone $B=[-\pi,\pi]^2$.
    Indeed, if $\Lambda(k)=\lambda$ then there are $l_1,l_2\in\N$ such that
    $E_{l_1}^{V_1}(k_1)+E_{l_2}^{V_2}(k_2)=\lambda$. Lemma~\ref{lem:RS_results}~(iii)
    and~\eqref{eq:A2_condition} imply
    \begin{align*}
      E_{l_1}^{V_1}(k_1) 
      &= \lambda - E_{l_2}^{V_2}(k_2) 
      \leq  \lambda - E_1^{V_2}(0)
      < E_1^{V_1}(\pi), \\
      E_{l_2}^{V_2}(k_2) 
      &=  \lambda - E_{l_1}^{V_1}(k_1) 
      \leq  \lambda - E_1^{V_1}(0)
      < E_1^{V_2}(\pi).
    \end{align*}
    So Lemma~\ref{lem:RS_results}~(iii) gives
    $l_1=l_2=1$ and $|k_1|,|k_2|< \pi-\delta < \pi$ for some $\delta>0$. So we have
    \begin{equation}\label{eq:Flambda}
       F_\lambda = \big\{ k\in \R^n : \Lambda(k) = \lambda\big\} \subset    (-\pi+\delta,\pi-\delta)^2=:U
    \end{equation}
    as well as
    $$
      \Lambda(k) = E_1^{V_1}(k_1) + E_1^{V_2}(k_2)\quad\text{and}\quad 
      \Psi(x,k) = \phi^{V_1}_1(x_1,k_1) \phi^{V_2}_1(x_2,k_2) \quad\text{for }x\in\Omega,k\in \ov U.
    $$
    From Lemma~\ref{lem:RS_results improved} we get that $\Lambda$ is analytic on $U$.
    Furthermore, all derivatives of $\phi^{V_i}_1(x_i,k_i)$ with respect to
    $k_i$ are continuous as maps from $(-\pi,\pi)$ to $H^2([0,1];\C)$ and thus continuous with range
    $L^\infty([0,1];\C)$. This implies for $N=3$ and arbitrary $\beta\in (0,1)$ 
    $$
      \sup_{x\in \Omega}\|\Psi(x,\cdot)\|_{C^{N,\beta}(\ov U)}
      \leq \sup_{x_1,x_2\in [0,1]} \|  \phi^{V_1}_1(x_1,\cdot)\|_{C^{N,\beta}([-\pi+\delta,\pi-\delta])}\| 
      \phi^{V_2}_1(x_2,\cdot)\|_{C^{N,\beta}([-\pi+\delta,\pi-\delta])} <\infty. 
    $$ 
    This proves (a).

    \medskip
    
    In order to check~(b) we denote by $Z_1,Z_2$ the strictly increasing inverses of $E_1^{V_1},E_1^{V_2}$ on
    $[0,\pi)$, which exist due to Lemma~\ref{lem:RS_results}~(iii). Then we have $F_\lambda=\gamma_1\oplus \ldots\oplus
    \gamma_4$ for the positively oriented curves
    \begin{align*}
      \gamma_1(r) &:= \big(   Z_1(\lambda-r),  \phantom{-} Z_2(r) \big) 
      &&\hspace{-1cm}\in [0,\pi)\times [0,\pi)  
      &&\big( E_1^{V_2}(0)\leq r\leq \lambda-E_1^{V_1}(0) \big),    \\
      \gamma_2(r) &:= \big(- Z_1(r), Z_2(\lambda-r) \big) 
      &&\hspace{-1cm}\in (-\pi,0] \times [0,\pi)  
      &&\big( E_1^{V_1}(0)\leq r\leq \lambda-E_1^{V_2}(0) \big),    \\
      \gamma_3(r) &:= \big(- Z_1(\lambda-r),-Z_2(r) \big) 
      &&\hspace{-1cm}\in (-\pi,0]\times (-\pi,0]  
      &&\big( E_1^{V_2}(0)\leq r\leq \lambda-E_1^{V_1}(0) \big),    \\
      \gamma_4(r) &:= \big(Z_1(r),-Z_2(\lambda-r) \big) 
      &&\hspace{-1cm}\in [0,\pi)\times (-\pi,0]  
      &&\big( E_1^{V_1}(0)\leq r\leq \lambda-E_1^{V_2}(0) \big).    
    \end{align*}
    running through the four quadrants of the Brillouin zone.  
    By Lemma~\ref{lem:RS_results improved}  we have  
    $$
      \nabla \Lambda(k) 
      = \big( (E_1^{V_1})'(k_1),(E_1^{V_2})'(k_2)\big)
      \neq (0,0) 
      \quad\text{for }k=(k_1,k_2)\in F_\lambda \subset U
    $$  
    because of $(0,0)\notin F_\lambda$. So  we
    deduce from~\eqref{eq:Flambda} that $F_\lambda$ is a closed, compact, regular and analytic curve as the
    zero set of the regular analytic function $\Lambda$.  Finally, the Gaussian curvature of $F_\lambda$ at the point $k\in F_\lambda$
    is, according to Proposition~3.1 in~\cite{Gold_curvature}, given by
    \begin{align*}
      \kappa 
      &= \frac{ \big( (-E_1^{V_2})'(k_2) \;\; (E_1^{V_1})'(k_1)\big)
      \matII{(E_1^{V_1})''(k_1)}{0}{0}{(E_1^{V_2})''(k_2)} \vecII{-(E_1^{V_2})'(k_2)}{(E_1^{V_1})'(k_1)}   }{
      \big( (E_1^{V_1})'(k_1)^2+ (E_1^{V_2})'(k_2)^2\big)^{3/2}} \\
      &= \frac{ (E_1^{V_1})''(k_1) (E_1^{V_2})'(k_2)^2 + (E_1^{V_2})''(k_2) (E_1^{V_1})'(k_1)^2  }{
      \big( (E_1^{V_1})'(k_1)^2+ (E_1^{V_2})'(k_2)^2\big)^{3/2}}
      > 0.
    \end{align*}
    This establishes property (b) and the claim is proved.
  \end{proof}
  
  A reasonable criterion for the positivity assumption on the second derivatives of the first band function
  $E_1^{V_i}$ does not seem to be available in the literature. As Figure~\ref{Fig:Fermi_surfaces}~(b)
  suggests, this assumption may not hold for heavily oscillating potentials. On the other hand,
  Figure~\ref{Fig:Fermi_surfaces}~(a) indicates that the assumption may be exptected to hold for potentials
  that are close to constant ones. Notice that $E_1^{\mu_i}(k_i)=\mu_i+k_i^2$ for the constant potentials
  $\mu_i\in\R$, so $(E_1^{\mu_i})''(k_i)=2$. In the following proof of Lemma~\ref{lem:example} we make this
  rigorous using the Implicit Function Theorem.
  
  \medskip

%
  
  \noindent{\it Proof of Lemma~\ref{lem:example}:}\;  Let $\eps,\mu_1,\mu_2>0$ be given and 
  $\lambda\in (\mu_1+\mu_2+\eps,\mu_1+\mu_2+\pi^2-\eps)$ as in the statement of Lemma~\ref{lem:example}, set
  $L:=-\Delta+V_1(x_1)+V_2(x_2)$ for $1$-periodic, piecewise  continuous functions $V_1,V_2$.  
  Then $L$ is $\Z^2$-periodic and has the regularity properties required by (A1). In
  Lemma~\ref{lem:(A3)} we verified~(A3). In view of Theorem~\ref{thm:A2condition} it therefore remains to check that  
  for $\|V_i-\mu_i\|_{L^\infty([0,1])}$ sufficiently small
  we have $(E_1^{V_1})''(k_1),(E_1^{V_2})''(k_2)>0$ whenever $k=(k_1,k_2)\in F_\lambda\subset U$.
  To verify this claim we apply the Implicit Function Theorem. For any given interval  $I\subset\subset
  [-\pi,\pi]$ we define the continuously differentiable function  
  \begin{align*}
    F&:C^2(I,H^2_{per}([0,1];\C))\times C^2(I;\C)\times L^\infty([0,1]) \to  C^2(I,L^2([0,1];\C))\times
    C^2(I;\C), \\
      F&(\zeta,E,W)(x,k) := \Big( -\zeta''(x,k)-2ik\zeta'(x,k)+(W(x)-E(k)+k^2)\zeta(x,k), \int_0^1
      \zeta(y,k)^2\,dy - 1\Big).
  \end{align*}
  The motivation for this function comes from the fact that the equality $F(\zeta,E,W)=0$ holds if and only
  if $\phi(x,k):=\zeta(x,k)e^{ikx}$ solves~\eqref{eq:Floquet_EVproblem_1D}. In particular, we have
  $F(\zeta_1^{V_i},E_1^{V_i},V_i)=0$ for $\zeta_1^{V_i}(x,k):=\phi_1^{V_i}(x,k)e^{-ikx}$. So the dependence of
  $\phi_1^{V_i}$ and $E_1^{V_i}$ on the potential $V_i\approx \mu_i$ may be investigated through the Implicit
  Function Theorem applied to $F$ around its zero $(\zeta_1^{\mu_i},E_1^{\mu_i},\mu_i)$ given by
  $\zeta_1^{\mu_i}=1,E_1^{\mu_i}(k)=\mu_i+k^2$.    
    
  \medskip
    
  We claim that the partial derivative  
  $$
      F_{(\zeta,E)}(\zeta_1^{\mu_i},E_1^{\mu_i},\mu_i)[(\zeta,E)](x,k)
      = \Big(-\zeta''(x,k)-2ik\zeta'(x,k) - E(k), 2 \int_0^1 \zeta(y,k)\dy \Big)
  $$ 
  is an isomorphism. Indeed, the equation
  \begin{equation} \label{eq_ProofLemma_IFT}
    F_{(\zeta,E)}(\zeta_1^{\mu_i},E_1^{\mu_i},\mu_i)[(\zeta,E)]=(f,\sigma)\in Y\times C^2(I;\C)
  \end{equation} 
  is, by solving the corresponding linear ODE, equivalent to 
  $$
      \zeta(x,k)  = A(k)+B(k)e^{-2ikx} - \int_0^x \int_0^z e^{2ik(y-z)}\big(E(k)+f(y,k)\big)\,dy\,dz, 
  $$
  where $A,B,E\in C^2(I;\C)$ are subsequently defined by 
    \begin{align*}
      E(k) &:= - \int_0^1 f(x,k)\,dx,  \\
      B(k)&:= \frac{1}{e^{-2ik}-1} \int_0^1 \int_0^z e^{2ik(y-z)}\big(E(k)+f(y,k)\big)\,dy\,dz, \\
      A(k)&:= \frac{\sigma(k)}{2} -  B(k)\int_0^1 e^{-2ikx}\,dx + \int_0^1 \int_0^x \int_0^z
      e^{2ik(y-z)}\big(E(k)+f(y,k)\big)\,dy\,dz \,dx.
    \end{align*}
    Here the first two definitions assure that $\zeta\in C^2(I,H^2_{per}([0,1];\C))$ and the formula for
    $A(k)$ guarantees that the second equation in~\eqref{eq_ProofLemma_IFT} holds.
    Notice that the denominator in the formula for $B(k)$ shows why we have to restrict to intervals
    $I\subset\subset [-\pi,\pi]$. The Implicit Function Theorem then implies that
    $(\zeta_1^{V_i},E_1^{V_i})$ depends continuously on $V_i$ with respect to the spaces introduced above. 
    
    \medskip
    
    In particular, the above result to the interval $I:=[-\pi+\tfrac{\eps}{8\pi},\pi-\tfrac{\eps}{8\pi}]$. So
    the Implicit Function Theorem provides a $\delta>0$ such that $\|V_i-\mu_i\|_{L^\infty([0,1])}<\delta$
    implies
    \begin{equation}\label{eq:E1_approx}
      \| E_1^{V_i}-E_1^{\mu_i} \|_{C^2(I;\C)} <
       \min\big\{\frac{\eps}{4},1\big\},\quad\text{where } 
        E_1^{\mu_i}(k_i)=\mu_i+k_i^2  \qquad (i=1,2).
    \end{equation}
    For such potentials $V_1,V_2$ the Fermi surface  satisfies $F_\lambda\subset
    I\times I$.
    Indeed, $k\notin I\times I$ implies $\max\{|k_1|,|k_2|\}\geq \pi-\frac{\eps}{8\pi}$ and thus by Lemma~\ref{lem:RS_results}~(iii)
    \begin{align*}
      \Lambda(k)
      &= E_1^{V_1}(k_1) + E_1^{V_2}(k_2) \\ 
      &\geq \max\Big\{  
       E_1^{V_1}\big(\pi- \frac{\eps}{8\pi}\big) + E_1^{V_2}(0),
       E_1^{V_1}(0) + E_1^{V_2}\big(\pi- \frac{\eps}{8\pi}\big)
       \Big\} \\
       &\stackrel{\eqref{eq:E1_approx}}{\geq} \max\Big\{ 
       E_1^{\mu_1}\big(\pi- \frac{\eps}{8\pi}\big) + E_1^{\mu_2}(0) - \frac{\eps}{2},
       E_1^{\mu_1}(0) + E_1^{\mu_2}\big(\pi- \frac{\eps}{8\pi}\big) - \frac{\eps}{2} \Big\} \\
       &\stackrel{\eqref{eq:E1_approx}}{=}  \mu_1 + \mu_2 + \big(\pi- \frac{\eps}{8\pi}\big)^2 -
       \frac{\eps}{2} \\
       &> \mu_1 + \mu_2 + \pi^2 - \eps \\
       &> \lambda \hspace{5cm} \text{whenever }k\notin I\times I. 
    \end{align*}
    Furthermore, \eqref{eq:E1_approx} implies 
    $(E_1^{V_1})'',(E_1^{V_2})''>0$ on $I$ and so that the above computation implies
    $$
      (E_1^{V_1})''(k_1),(E_1^{V_2})''(k_2)>0 \quad\text{for }k=(k_1,k_2)\in F_\lambda.
    $$
    So Theorem~\ref{thm:A2condition} proves that (A2) holds and the proof of Lemma~\ref{lem:example} is
    finished.
    \qed

\section*{Acknowledgments}

  The author thanks Tom\'a\v{s} Dohnal (University of Dortmund) for stimulating discussions about
  Floquet-Bloch theory during the past years and for providing Figure~\ref{Fig:Fermi_surfaces}.  Additionally,
  he gratefully acknowledges financial support by the Deutsche Forschungs\-gemeinschaft (DFG) through
  CRC~1173 ''Wave phenomena: analysis and numerics''.

\bibliographystyle{plain}
\bibliography{biblio}

\end{document}